\documentclass[12pt,reqno]{amsproc}

\usepackage{mathpazo, mathrsfs}

\usepackage{amsmath,amssymb,amsthm}
\usepackage{enumitem}
\usepackage{array}
\usepackage{booktabs}
\usepackage{tikz}
\usetikzlibrary{arrows.meta,decorations.markings,positioning,calc}
\usepackage[hidelinks,colorlinks=true,linkcolor=blue,citecolor=blue]{hyperref}

\hypersetup{pdftitle={Landfall Geometry},pdfauthor={Frances Kirwan and Vidit Nanda}}
\usepackage[capitalise,noabbrev]{cleveref}
\usepackage[margin=.8in]{geometry}
\renewcommand{\mathcal}{\mathscr}

\newcommand{\Crit}{\operatorname{\bf Crit}}
\newcommand{\Inv}{\operatorname{\bf Inv}}

\newcommand{\dist}{\operatorname{\bf dist}}
\newcommand{\Hess}{\operatorname{Hess}}
\newcommand{\ev}{\operatorname{\bf ev}}
\newcommand{\dest}{\operatorname{dest}}
\newcommand{\Ob}{\operatorname{Ob}}
\newcommand{\Mor}{\operatorname{Mor}}

\newcommand{\Con}{\operatorname{\bf Con}}
\newcommand{\set}[1]{\left\{{#1}\right\}}
\newcommand{\ip}[1]{\left\langle{#1}\right\rangle}

\newcommand{\Z}{\mathbb Z}
\newcommand{\R}{\mathbb R}
\newcommand{\C}{\mathbb C}
\newcommand{\eps}{\epsilon}
\newcommand{\Disk}{\mathbb D}
\newcommand{\cK}{\mathcal K}

\newcommand{\cL}{\mathcal L}

\newcommand{\Ev}{\operatorname{Ev}}            % evaluation map B\cL -> X
\newcommand{\Kord}{\mathbf K}                  % order complex \Kord(P)
\newcommand{\kk}{\Bbbk}                        % ground field (\mathbb{k} has no glyph under mathpazo)
\newcommand{\Sing}{\mathsf S}                  % singular chains \Sing_*(X)
\newcommand{\RT}{\mathbf T}                    % Reidemeister torsion
\newcommand{\simp}{\mathbf\Delta}              % standard simplex \simp^k

\theoremstyle{plain}
\newtheorem{theorem}{Theorem}[section]
\newtheorem{proposition}[theorem]{Proposition}
\newtheorem{corollary}[theorem]{Corollary}
\newtheorem{lemma}[theorem]{Lemma}
\newtheorem*{theorem*}{Theorem}
\theoremstyle{definition}
\newtheorem{definition}[theorem]{Definition}
\newtheorem{remark}[theorem]{Remark}
\newtheorem{example}[theorem]{Example}

\title{Landfall Geometry}
\subjclass[2020]{Primary 55U15; Secondary 37B30, 55P05, 03C64, 58K05}
\keywords{Conley index, Morse theory, flow category, connection matrix, gradient flow, discrete Morse theory}
\author{Frances Kirwan}
\author{Vidit Nanda}
\date{}

\begin{document}
	
	\begin{abstract}
		We study the negative gradient flow of analytic maps $f:X\to\R$ defined on compact analytic Riemannian manifolds without imposing any nondegeneracy or transversality assumptions. We construct tight isolating blocks around the connected components of the critical set $\Crit(f)$ and record only bounded-time contacts made after an orbit exits one block and before it makes landfall within the interior of another. These contacts assemble into a strict topological category $\cL$ -- called the landfall category -- whose classifying space $B\cL$ is shown to be simple homotopy equivalent to $X$ naturally over all down-sets of the flow partial order. The filtered homotopy type is independent of the isolating blocks. Passing to a finite subanalytic triangulation yields a flow-induced acyclic partial matching, and hence, an explicit finite chain complex generated by the non-exit simplices of the blocks which computes the homology of $X$. The finite model also computes the Morse--Conley spectral sequence over arbitrary coefficients; over a field, homological perturbation further produces a connection matrix whose entries are finite sums over descending contact itineraries.
	\end{abstract}
	
	\maketitle
	
	\section*{Introduction}
	
	Let $X$ be a compact Riemannian manifold and $f:X \to \R$ a smooth map. For any pair of real numbers $c<d$, we have a tautological {\em adjunction pair} of spaces $(D,S)$ which relates the sublevel sets of $f$ at $c$ and $d$. In particular, the equality
	\[
	\set{f \leq d} = \set{f \leq c} \cup_S D
	\]
	evidently holds for $D = \set{c \leq f \leq d}$ and $S = \set{f=c}$ regardless of $f$. If $f$ is Morse with a unique critical point $x$ in $\set{c\leq f \leq d}$, and $c<f(x)<d$, then $\set{f\leq d}$ is homotopy equivalent to $\set{f\leq c}$ with a single cell attached, whose dimension is the Morse index of $x$ \cite[Theorem~3.2]{Milnor1963}. If $f$ further satisfies Morse--Smale transversality, then there is a chain complex computing the homology of $X$ whose chain groups are generated by critical points and whose boundary operator is built from counting gradient flow lines \cite{Witten1982}.  The flow category formalism from \cite{CohenJonesSegalClassifying} assembles compactified moduli spaces of gradient trajectories into a category whose classifying space can reconstruct the homotopy type of $X$.
	
	None of these miracles survives unscathed once we relax the underlying nondegeneracy and transversality assumptions on $f$ --- critical sets need not be submanifolds (let alone isolated points), and the homotopy types of the adjunction pairs across critical values are encoded by index pairs \cite{Conley1978}, whose quotient homotopy type is in general too complicated to be represented by a single integer such as the Morse index. Even when such an adjunction pair is understood, its attaching map to the lower sublevel set need not decompose into well-behaved pieces indexed by the lower critical components. The moduli spaces of compactified flow lines also no longer have the regularity properties necessary to support Morse boundary operators or topologically faithful flow categories. Moreover, the usual reconstruction statement requires tameness hypotheses on these moduli spaces: the authors of \cite{CalleLiu2026} establish reconstruction under a local-contractibility hypothesis and exhibit a counterexample to the unrestricted claim. In this paper, we show that all the attaching data can be resolved for real-analytic $f:X \to \R$ without ever seeking recourse to asymptotic trajectory spaces.  
	
	\subsection*{The Landfall Category} We assume that the manifold $X$, together with its Riemannian metric $g$, and the function $f:X \to \R$ are analytic\footnote{\emph{Analytic} and \emph{real-analytic} are used interchangeably throughout.}\!. We allow $\Crit(f)$ to be singular and study the negative gradient flow of $f$ without imposing  Morse--Bott or Morse--Smale conditions. Let $\coprod_{p \in P} S_p$ be the decomposition of $\Crit(f)$ into connected components, and write $p \leadsto q$  whenever  $p\ne q$ and there exists some $x \in X$ whose forward and backward limits under the flow live in $S_q$ and $S_p$ respectively. Denote by $>_F$ the flow partial order on $P$ which arises from taking the transitive closure of $\leadsto$. We may, and will, choose disjoint, compact, subanalytic {\bf isolating blocks} $N_p$ so that each exit set $E_p \subset \partial N_p$ is the intersection of $N_p$'s boundary with a regular level set of $f$. 
	
	We follow the orbit of each $x\in E_p$ only until it first enters the {interior} of another block $N_r$. Before this terminal \emph{landfall}, it may touch the boundaries of several other blocks $N_q$. For each such $q$, let $\mathcal C_{qp}\subset E_p$ be the (necessarily compact) set of all $x$ whose forward time flowlines intersect $\partial N_q$ before or at terminal landfall. The flow yields a continuous embedding $\chi_{qp}:\mathcal C_{qp} \to \partial N_q$; and for sufficiently small isolating blocks, $\mathcal C_{qp} \neq \varnothing$ implies $p >_F q$. In particular, every block $N_q$ touched on the way to landfall in $N_r$ satisfies $p >_F q \geq_F r$.
	
	The basic geometric fact underlying our construction here is that this cover $E_p = \bigcup_{p >_F q} \mathcal C_{qp}$ of each exit set is well-behaved along overlaps in the following sense. Whenever some $x$ lies in $\mathcal{C}_{qp} \cap \mathcal{C}_{rp}$ for  $r\ne q$, then (up to permuting $q$ and $r$) we must have $p >_F q >_F r$ and the image $y := \chi_{qp}(x)$ must lie in $\mathcal C_{rq}$. Moreover, $y$'s forward orbit (and therefore, its landfall destination) must coincide with the tail of $x$'s forward orbit. The resulting composition is illustrated by the striped arrow below:
	
	\begin{center}
		\includegraphics[scale=.45]{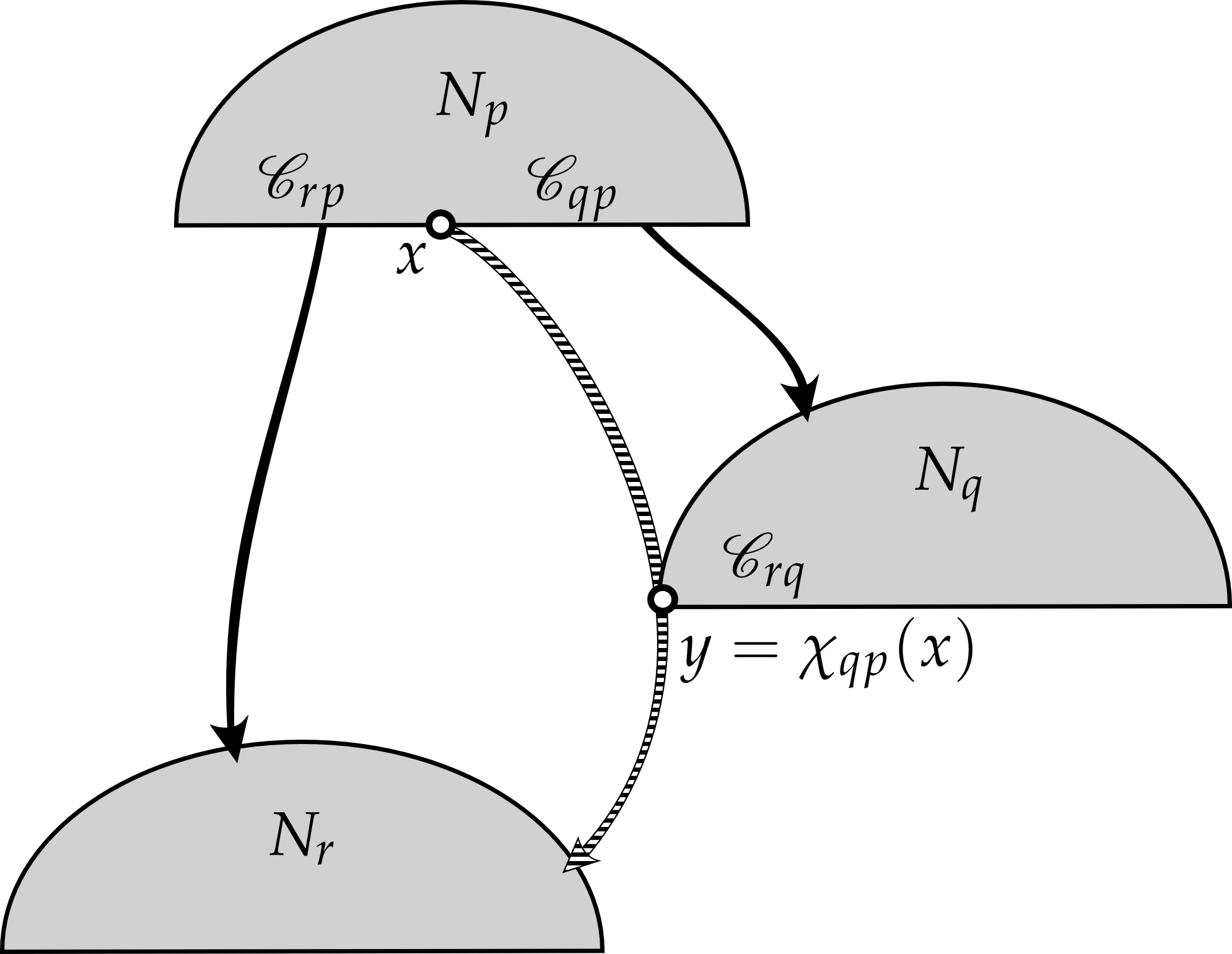}
	\end{center}
	
	\noindent (See \Cref{thm:factorisation,thm:contact-support}.) Armed with this picture, we define the {\bf landfall category} $\cL$ of $f$ with respect to the chosen isolating blocks $\set{N_p \mid p \in P}$ via:
	\[
	\Ob(\cL)=\coprod_{p\in P}N_p,
	\quad \text{ and } \quad
	\Mor(\cL)=\coprod_{p>_F q}\mathcal C_{qp},
	\]
	with source map $x\mapsto x$ and target map $x\mapsto\chi_{qp}(x)$. This is a nonunital category internal to topological spaces; composition is well-defined and strictly associative precisely because every composable pair of morphisms only implicates points lying along a single unbroken orbit of the gradient flow.
	
	\subsection*{Main Results} 
	
	Let $f:X \to \R$ be a real-analytic map and write $\coprod_{p \in P} S_p$ for the decomposition of its critical set into connected components. We show (in \Cref{cor:support-sandwich} below) that for sufficiently small isolating blocks $\set{N_p \mid p \in P}$, we have
	\[
	p \leadsto q \implies \mathcal{C}_{qp} \neq \varnothing \implies p >_F q,
	\]
	so the nonemptiness of hom-sets in the landfall category generates the flow-induced partial order on $\Crit(f)$ via transitive closure. Assuming that our $N_p$ are chosen sufficiently small in this sense, let $B\cL$ denote the geometric realisation of the semi-simplicial nerve \cite{EbertRandalWilliams2019} of the landfall category. Our first main result relates $B\cL$ to $X$.
	
	\begin{theorem*}[A]
		There is a simple homotopy equivalence $B\cL\simeq X$.
	\end{theorem*}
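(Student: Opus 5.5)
The plan is to induct along a linear extension of the flow partial order $>_F$ on $P$, comparing two filtrations: one of $X$ by flow-invariant neighbourhoods of down-sets, and one of $B\cL$ by full subcategories. For a down-set $D\subseteq P$, let $\cL_D$ be the full subcategory on objects $\coprod_{p\in D}N_p$. Since $\mathcal C_{qp}\neq\varnothing$ forces $p>_F q$, every morphism out of an object in $D$ lands in $D$, so $\cL_D$ is a downward-closed subcategory and $B\cL_D\subseteq B\cL$ is a subcomplex. On the space side, set $X_D$ to be the union of the blocks $N_p$, $p\in D$, together with all forward flow segments from their exit sets up to terminal landfall. This is a compact attractor neighbourhood, and it deformation retracts onto $\bigcup_{p\in D}N_p \cup \{\text{flow segments}\}$. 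I will build compatible maps $\Phi_D:B\cL_D\to X_D$ and show by induction on $|D|$ that each is a simple homotopy equivalence.

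\emph{Construction of $\Phi$.} A $k$-simplex of the nerve is a chain $x_0\to x_1\to\cdots\to x_k$ of composable morphisms. By \Cref{thm:factorisation} all the $x_i$ lie on a single unbroken forward orbit, at times $0=t_0<t_1<\dots<t_k$. I send the barycentric point $(s_0,\dots,s_k)$ to the flow of $x_0$ at time $\sum s_it_i$. Continuity in $x_0$ follows from continuity of the $\chi_{qp}$, and hence of the contact times, on the compact sets $\mathcal C_{qp}$. Compatibility with face maps holds because composition is concatenation along the orbit. Degenerate issues do not arise, since the nerve is semi-simplicial.

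\emph{Inductive step.} Let $D'=D\sqcup\{p\}$ with $p$ maximal. Then $B\cL_{D'}$ is obtained from $B\cL_D$ by attaching $N_p\times\{\text{vertex}\}$ together with the mapping-cylinder-like piece $\coprod_{q}\mathcal C_{qp}\times\Delta^1\cup(\text{higher simplices})$. This piece realises the homotopy colimit of the cover $E_p=\bigcup_q\mathcal C_{qp}$, glued along the maps $\chi$. By \Cref{thm:contact-support}, the overlaps $\mathcal C_{qp}\cap\mathcal C_{rp}$ are exactly the sources of composites, so the nerve of this cover is computed by the higher simplices. A nerve-lemma argument, valid because subanalytic compact sets are triangulable and the cover is by closed subcomplexes, then identifies the attached piece with the mapping cylinder of $E_p\to B\cL_D$. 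Likewise, $X_{D'}=X_D\cup_{E_p}N_p$ up to the collar given by flow segments, since $E_p$ is a regular level-set slice and the flow provides a collar. By the gluing lemma for homotopy pushouts, $\Phi_{D'}$ is a homotopy equivalence. At $D=P$ we get $X_P=X$, because every orbit limits into $\Crit(f)$ and so enters some block.

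\emph{Simplicity.} Fix a finite subanalytic triangulation compatible with all the $N_p$, $E_p$, $\mathcal C_{qp}$ and the maps $\chi_{qp}$. This is available by the o-minimal/subanalytic triangulation theorem, after subdividing the graphs of the $\chi$. Then every step above is a cellular pushout of finite CW complexes along cellular maps, and mapping cylinders of simplicial maps collapse onto their targets. The gluing lemma for simple homotopy equivalences (Chapman/Cohen) then yields simplicity. The main obstacle I expect is exactly this tameness: realising the flow-defined maps $\chi_{qp}$ and the hocolim identification simplicially. The flow is analytic, but the contact times are only subanalytic after restricting to compact pieces, and I must check that the contact-time functions are subanalytic and continuous in order to triangulate. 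My first attempt will be to show that the graph of $\chi_{qp}$ is a definable set in $\R_{an}$, using the fact that contact is a first-hitting condition with $\partial N_q$ transverse along $E$-type faces.
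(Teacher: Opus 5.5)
\textbf{There is a genuine gap in your proposal.}

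Your map $\Phi$ is exactly the paper's $\Ev$. The nerve side of your induction is sound: because each $R(x)$ is totally ordered, $B\cL_{D'}$ is the double mapping cylinder of $B\cL_D\xleftarrow{d_0}H\to N_p$. Here $H$ is the realised \v{C}ech nerve of the contact cover $\{\mathcal C_{qp}\}_q$ of $E_p$, and the right-hand map is projection to $E_p\subset N_p$.

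The gap is on the space side. Your identification $X_{D'}=X_D\cup_{E_p}N_p$ ``up to the collar'' has no continuous attaching map behind it. The function $\tau_p$ is only upper semicontinuous and jumps across every landfall wall (\Cref{cor:wall}), and first-contact times can jump as well. Nor is $\Theta_p\cap X_D$ the far end of a collar. It is the set $Y$ of all post-first-contact tails, whose fibre over $x\in E_p$ is the whole interval from the first contact of $x$ to its landfall.

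The correct decomposition is $X_{D'}=X_D\cup_Y\widehat N_p$. The gluing lemma then requires $\Phi|_H:H\to Y$ to be a homotopy equivalence compatible with the gluing maps. Over each $x$ this map sends $\simp^{R(x)}$ linearly onto that interval, so its point fibres are the convex slices $\{\lambda\mid\sum_i\lambda_it_i=t\}$. Concluding from this needs a cell-like (Vietoris--Begle type) theorem. This contractible-fibre analysis is the real content of the theorem, and your proposal asserts its outcome rather than proving it.

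The simplicity step has the same problem. Neither the $\chi_{qp}$ nor $\Phi$ is simplicial; \Cref{lem:triangulations} only makes each $\chi_{qp}(\sigma)$ a subcomplex. So ``mapping cylinders of simplicial maps collapse'' is unavailable. Each local equivalence in your induction ($N_p\hookrightarrow\widehat N_p$, $H\to E_p$, $H\to Y$) would separately need Chapman's cell-like mapping theorem or topological invariance of torsion.

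Once that tool is admitted, the induction becomes superfluous, and this is how the paper argues. It uses that $\Ev\times\Gamma$ embeds the prismatic regular CW complex, and chooses the transit through $z$ with maximal source label. It then shows every fibre of $\Ev$ is either a point or the convex slice $\{\lambda\in\simp^m\mid\sum_i\lambda_it_i=t\}$ of the simplex on a single complete contact list. So $\Ev$ is a cell-like map of compact polyhedra, and Chapman's theorem gives a simple homotopy equivalence in one step.

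Your concern about definability of contact times is already settled in \Cref{lem:landfall-subanalytic,lem:contact-branches} by bounded-time subanalyticity via $T_G$. No transversality of $\partial N_q$ is needed.
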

	
	Here $B\cL$ carries a finite regular CW structure induced by triangulations of the blocks (see \Cref{lem:prismatic-cw}). We show in \Cref{thm:classifying-space} below that this equivalence arises from an explicit {\bf evaluation map} $\Ev:B\cL \to X$, which respects the natural filtration of $X$ by the down-sets of $P$. Explicitly, for each down-set $A \subset P$, we have the subset $X_A \subset X$ consisting of all $x$ which lie on a forward orbit emanating from $N_p$ for some $p \in A$, as well as the full subcategory $\cL_A$ of $\cL$ spanned by objects $\coprod_{p \in A} N_p$. Now $\Ev$ restricts naturally to a simple homotopy equivalence $B\cL_A \simeq X_A$. It follows that for any pair $A' \subset A$ of down-sets in $P$, there is a pointed homotopy equivalence between $B\cL_{A}/B\cL_{A'}$ and the homotopy Conley index of the maximal flow-invariant subset of $\overline{X_{A} - X_{A'}}$. See \Cref{thm:filtered-classifying} and \Cref{thm:homotopy-conley} for details. For two choices of blocks, flowing for a sufficiently large uniform time gives a homotopy equivalence of their entire down-set filtrations. These comparison maps are canonical up to filtered homotopy and compatible with composition; see \Cref{prop:block-independence}. 
	
	Since $f$ is analytic and the $N_p$ are subanalytic, we may choose finite subanalytic triangulations of each $N_p$ compatible with the exit sets and contact maps. Let $Q_{p,*}=C_*(N_p,E_p;\Z)$ and write $d_p$ for its relative simplicial boundary map. These chain groups are freely generated by the simplices of $(N_p - E_p)$. For such a simplex $\alpha$, its ordinary boundary splits as $\partial\alpha=d_p\alpha+\beta_p\alpha$, where $\beta_p\alpha$ consists of the faces contained in $E_p$. Flowing each of these exit faces to its landfall block gives a linear map $L:\bigoplus_p C_*(E_p;\Z)\to\bigoplus_p Q_{p,*}$ which strictly lowers the flow order. Consider the direct sum $Q_*:=\bigoplus_p Q_{p,*}$, similarly set $d:=\bigoplus_p d_p$ and $\beta:=\bigoplus_p\beta_p$. Our next main result relates a deformation of $Q_*$ to the singular chain complex $\Sing_*(X;\Z)$ of $X$.
	
	\begin{theorem*}[B]
		The deformation $d^{\mathrm{LF}}=d+L\beta$ yields a boundary operator on $Q_*$, and the deformed chain complex $(Q_*,d^\text{\rm LF})$ is chain homotopy equivalent to $\Sing_*(X;\Z)$ naturally over the down-sets of $P$.
	\end{theorem*}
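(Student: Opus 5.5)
Theorem~(B) will follow from Theorem~(A) and discrete Morse theory applied to the finite regular CW structure on $B\cL$ from \Cref{lem:prismatic-cw}. The plan has four steps.

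\textbf{Step 1: the cells of $B\cL$.} After choosing finite subanalytic triangulations of the $N_p$ compatible with the $E_p$, the $\mathcal C_{qp}$ and the contact maps $\chi_{qp}$, the cells of $B\cL$ are of two kinds:
\begin{enumerate}[label=(\roman*)]
\item simplices $\alpha$ of the objects $N_p$, i.e.\ nerve level $0$;
\item prisms $\sigma\times\simp^k$, where $\sigma$ is a simplex of $\mathcal C_{q_1p}\cap\chi_{q_1p}^{-1}(\mathcal C_{q_2q_1})\cap\cdots$ indexing a composable chain $p>_F q_1>_F\cdots>_F q_k$.
\end{enumerate}

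\textbf{Step 2: an acyclic matching.} We pair every prism not starting in the interior with a lower-dimensional partner, obtained by flowing one step further or dropping the last contact. Concretely:
\begin{enumerate}[label=(\alpha)]
\item Each simplex $\sigma\subset E_p$ is matched with the prism $\sigma\times\simp^1$ lying over its contact (landfall) morphism.
\item Prisms over longer chains are matched by collapsing the last contact, using the factorisation property (\Cref{thm:factorisation,thm:contact-support}).
\end{enumerate}
The unmatched (critical) cells should be exactly the simplices of $N_p-E_p$. Acyclicity follows because each matching step strictly lowers the flow order $>_F$, or, within a fixed chain, lowers the nerve degree. We check this by a filtration argument: filter first by the down-set, then by the length of the chain. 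Matched pairs must be regular faces, which holds since the prisms are products and $\sigma$ is a codimension-zero face of $\sigma\times\simp^1$.

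\textbf{Step 3: identifying the Morse complex.} The Morse complex of this matching has chain groups $\bigoplus_p C_*(N_p,E_p)=Q_*$. Its differential is computed by summing over gradient paths. A critical simplex $\alpha$ has boundary $d_p\alpha+\beta_p\alpha$. The interior faces $d_p\alpha$ are critical and contribute directly. Each exit face in $\beta_p\alpha$ is matched upward into a prism; following the V-path through the prisms transports it along the flow until it lands in the interior triangulation of the landfall block. This is exactly $L\beta_p\alpha$. One must verify that longer V-paths cancel or reduce to this single transport. That holds because the composition in $\cL$ is strict along single orbits, so the intermediate touched blocks contribute paths that telescope. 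Hence the Morse differential is $d+L\beta$. The identity $(d^{\mathrm{LF}})^2=0$ then comes for free, since a Morse complex is a chain complex.

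\textbf{Step 4: the comparison with $X$.} Discrete Morse theory gives a chain homotopy equivalence between the Morse complex and the cellular chains of $B\cL$, which is equivalent to $\Sing_*(B\cL)$. Composing with $\Ev_*$ from Theorem~(A) and \Cref{thm:classifying-space} gives the equivalence with $\Sing_*(X;\Z)$. Naturality over down-sets holds because the matching restricts to each full subcategory $\cL_A$: the matching only moves toward smaller flow order, so it preserves subcomplexes. Together with \Cref{thm:filtered-classifying}, this gives compatible equivalences with $\Sing_*(X_A)$.

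\textbf{Main obstacle.} The hard part is Step 3: showing that the V-path sum collapses precisely to $L\beta$. The risk is cells touching $\partial N_q$ at intermediate blocks. To control them, I would first make the triangulations fine enough that each simplex of $E_p$ lies in a single stratum of the contact cover. I would then argue by induction on the length of the chain in $>_F$.
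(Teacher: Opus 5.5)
Your architecture matches the paper's: discrete Morse theory on the prismatic CW structure of $B\cL$ from \Cref{lem:prismatic-cw}, with the non-exit block simplices as critical cells, followed by the Morse contraction, the cellular--singular comparison and the filtered evaluation equivalence of \Cref{thm:filtered-classifying}. Your Step~4 is fine.

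\textbf{Step 2.} Your second rule, collapsing the last contact of every longer chain, does not define a partial matching. Take $\sigma\subset E_p$ with $R(\sigma)=\{q_1>_F q_2>_F q_3\}$, so $\dest(\sigma)=q_3$. Collapsing sends both $(\sigma,\{p,q_1,q_2\})$ and $(\sigma,\{p,q_1,q_3\})$ to $(\sigma,\{p,q_1\})$. It also sends $(\sigma,\{p,q_1,q_2,q_3\})$ to $(\sigma,\{p,q_1,q_2\})$, which is already matched downward.

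What works is to toggle one fixed label, the destination. For every $\mathbf r\subset R(\sigma)$ with $\dest(\sigma)\notin\mathbf r$, pair $(\sigma,\{p\}\cup\mathbf r)$ with $(\sigma,\{p,\dest(\sigma)\}\cup\mathbf r)$. Since $\dest(\sigma)=\min R(\sigma)$, the upper cell loses its last contact exactly when that contact is terminal; otherwise the lower cell is extended by the terminal contact. Your rule for exit simplices is the case $\mathbf r=\varnothing$.

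Your acyclicity argument also fails as stated. Matched pairs differ in chain length, and the length alternately rises and falls along a gradient path, so it cannot serve as the second filtration coordinate; the base simplex can. By \Cref{lem:prismatic-faces}, facet steps weakly lower the source label and, at fixed source, weakly shrink the base simplex. Hence both are constant around a closed path. For fixed $(p,\sigma)$ the cells form the Boolean lattice of subsets of $R(\sigma)$. Every facet of an upper cell $(\sigma,\{p,\dest(\sigma)\}\cup\mathbf r)$ with the same $(p,\sigma)$, other than its partner, still contains $\dest(\sigma)$. It is therefore itself an upper cell, and the path cannot continue.

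\textbf{Step 3.} You rightly flag this as the crux, but you leave it to an unspecified telescoping or induction, and in fact nothing telescopes. The missing idea is a monotone invariant: set $\nu(\rho,\mathbf q)=|\mathbf q|-1$. A reversed matching edge raises $\nu$ by one. A facet step lowers it by at most one, since by \Cref{lem:prismatic-faces} labels can only be deleted and the base dimension cannot increase. So $\nu$ never decreases across a complete gradient-path step, while every critical cell has $\nu=0$.

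Now write $q=\dest(\sigma)$ for an exit facet $\sigma$ of a critical simplex $\alpha$. The path goes up to $e_\sigma=(\sigma,\{p,q\})$, whose edge is already the composite morphism from $p$ straight to $q$, so intermediate contacts never enter. The facets of $e_\sigma$ are of three kinds:
\begin{enumerate}[label=(\roman*)]
\item $\sigma$ itself;
\item the top-dimensional simplices of $\chi_{qp}(\sigma)$, all critical by \Cref{lem:terminal-chain}, a fact your argument needs explicitly;
\item side faces $(\rho,\{p,q\})$ with $\nu=1$, from which no gradient path can ever reach a critical cell.
\end{enumerate}
Orient $e_\sigma$ as $(-1)^{\dim\sigma}[\sigma]\times[p,q]$. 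Then the incidence of $\sigma$ in $e_\sigma$ is $-1$, so this single path contributes exactly $[\alpha:\sigma]L\sigma$. Together with the critical non-exit facets this gives $d_p\alpha+L\beta_p\alpha$, and $(d+L\beta)^2=0$ follows.

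A minor slip: $\sigma$ is a codimension-one face of $\sigma\times\simp^1$, not codimension zero.
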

	
	\noindent (See \Cref{thm:landfall-complex,cor:filtered-chains} below.) The proof constructs an acyclic partial matching on a regular CW model of $B\cL$ and employs discrete Morse theory \cite{Forman1998, dmpers}. We call $(Q_*,d^\mathrm{LF})$ the {\bf landfall chain complex} associated to the chosen blocks and triangulations. Its boundary operator has a viscerally geometric description: take the ordinary relative boundary of a simplex, and send each discarded exit face to its landfall block. When $X$ is connected, the acyclic partial matching lifts  $\pi_1X$-equivariantly to a (finitely generated, based, free) $\Z[\pi_1X]$-complex which is simple chain homotopy equivalent to the cellular chains of the universal cover.  Thus the landfall complex retains torsion (see \Cref{ex:lens-torsion}). It also populates the {\em Morse--Conley spectral sequence}
	associated to any strictly order-preserving function $\ell:P\to \{0,\ldots,m\}$; filtering the landfall complex by $\ell$ produces
	\[
	E^1_{i,j}\cong\bigoplus_{\ell(p)=i}\Con_{i+j}(S_p;\Lambda)
	\quad\Longrightarrow\quad H_{i+j}(X;\Lambda)
	\]
	over any commutative ring $\Lambda$. For fixed $\ell$, this spectral sequence is canonically independent of the blocks and triangulations from $E^1$ onward, see \Cref{thm:spectral-sequence}.
	
	Our third main result gives a further compression when working over coefficients in a field $\kk$. We now take $Q_{p,*}=C_*(N_p,E_p;\kk)$, whose homology $V_{p,*}=H_*(N_p,E_p;\kk)$ is the homological Conley index of $S_p$, as described in \cite[\S III.5]{Conley1978}. Over a field, each $Q_p$ admits chain maps $\jmath_p:V_p\to Q_p$ and $\pi_p:Q_p\to V_p$, together with a degree $+1$ map $h_p:Q_p \to Q_p$ satisfying
	\[
	\pi_p\jmath_p=1,\quad \text{ and } \quad d_ph_p+h_pd_p=\jmath_p\pi_p-1,
	\]
	along with $h_p^2=h_p\jmath_p=\pi_ph_p=0$. 
	These data form a {\em normalised contraction} of chain complexes from each $(Q_p,d)$ onto its homology $(V_p,0)$, as in \cite{Skoldberg2006}. We write $Q = \bigoplus_p Q_p$ and $d = \bigoplus_p d_p$, etc.\ for direct sums over $P$.
	
	\begin{theorem*}[C]
		Choose a normalised contraction $(\jmath_p,\pi_p,h_p)$ of each $Q_p$ onto $V_p$, and write $\jmath,\pi,h$ for the corresponding direct sums over $P$. With $\Delta=L\beta$, the operator
		\[
		D=\pi\Delta(1-h\Delta)^{-1}\jmath
		\]
		is a connection matrix for the decomposition $\Crit(f)=\coprod_p S_p$. In particular, the homology of $(\bigoplus_p V_p,D)$ is $H_*(X;\kk)$.
	\end{theorem*}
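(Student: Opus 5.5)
The plan is to obtain Theorem C from Theorem B by the homological perturbation lemma. Theorem B gives us $(Q_*,d^{\mathrm{LF}})$ with $d^{\mathrm{LF}}=d+\Delta$ and $\Delta=L\beta$, and it is chain homotopy equivalent to $\Sing_*(X;\kk)$ naturally over down-sets. Take the direct sum of the normalised contractions $(\jmath,\pi,h)$ of $(Q,d)$ onto $(V,0)$, where $V=\bigoplus_p V_p$, and regard $\Delta$ as a perturbation of $d$.

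\emph{Step 1: the perturbation is locally nilpotent.} Since $L$ strictly lowers the flow order and $\beta,h$ preserve the block decomposition, both $\Delta$ and $h\Delta$ map the summand $Q_p$ into $\bigoplus_{q<_F p}Q_q$. Because $P$ is finite, $(h\Delta)^{|P|}=0$. So $(1-h\Delta)^{-1}=\sum_{k<|P|}(h\Delta)^k$ is a finite sum.

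\emph{Step 2: apply the perturbation lemma.} The basic perturbation lemma, in the form of \cite{Skoldberg2006}, then produces a new contraction of $(Q,d+\Delta)$ onto $(V,D)$. Here $D=\pi\Delta(1-h\Delta)^{-1}\jmath$, the perturbed inclusion is $\jmath+h\Delta(1-h\Delta)^{-1}\jmath$, and the projection and homotopy are perturbed similarly. Note that the unperturbed differential on $V$ is $0$. The identities $D^2=0$ and the chain-map and homotopy relations are the standard output of the lemma; they hold because the side conditions $h^2=h\jmath=\pi h=0$ are satisfied. Combining with Theorem B gives $H_*(V,D)\cong H_*(Q,d^{\mathrm{LF}})\cong H_*(X;\kk)$.

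\emph{Step 3: connection-matrix properties.} These follow in three parts.
\begin{itemize}
\item \emph{Strict upper triangularity.} By Step 1 every term $\pi_q(\Delta h)^k\Delta\jmath_p$ vanishes unless $q<_F p$. Hence $D$ is strictly upper triangular with respect to $>_F$, and its diagonal blocks are $0$, matching the trivial differential on each Conley index $V_p=H_*(N_p,E_p;\kk)$.
\item \emph{Restriction to down-sets.} For every down-set $A$, the subspace $V_A=\bigoplus_{p\in A}V_p$ is $D$-invariant. The perturbed contraction restricts to a contraction of $Q_A$ onto $(V_A,D)$, because all maps respect the down-set filtration.
\item \emph{Intervals.} For a convex interval $I=A\setminus A'$, the quotient $(V_A/V_{A'},D)$ is therefore chain homotopy equivalent to $Q_A/Q_{A'}$. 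By the filtered statement of Theorem B together with \Cref{thm:homotopy-conley}, the latter computes $H_*(B\cL_A,B\cL_{A'};\kk)$, which is the homology Conley index of the invariant set attached to $I$.
\end{itemize}
These are exactly the axioms of a connection matrix in the sense of Franzosa.

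\emph{Main obstacle.} The delicate point is naturality: the isomorphisms for different intervals must be compatible with the connecting maps of attractor--repeller sequences. A connection matrix requires these braid-compatible identifications, not just abstract isomorphisms. I would handle this by using the naturality of Theorem B's equivalence over down-sets, applied to triples $A''\subset A'\subset A$. The short exact sequences of filtered quotients of $Q$ map to those of $V$ via the perturbed, filtration-preserving $\pi$ and $\jmath$, so the long exact sequences and their boundary maps are identified. A smaller technical point to verify is that the perturbed $\pi$ and $\jmath$ are indeed filtered. This should follow from the triangular form in Step 1, since each correction term only moves strictly downward in $>_F$.
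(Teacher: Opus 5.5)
Your proposal is correct and follows essentially the same route as the paper. Both arguments use the nilpotency of $h\Delta$ (from $\Delta$ strictly lowering the flow order while $h$ is block-diagonal) and the finite homological perturbation lemma; both observe that all perturbed maps preserve down-sets and so descend to interval quotients; and both identify the resulting homology braid with the homology index braid via \Cref{thm:homotopy-conley} and the filtered chain comparison, with your naturality discussion simply spelling out what the paper compresses into one line.
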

	
	The inverse in this formula is a finite geometric series --- since $h$ preserves block labels while $\Delta$ strictly lowers the finite flow order, their composite $h\Delta$ is nilpotent. Thus every entry of $D$ is a finite sum over descending contact itineraries. The contraction can also be chosen over $\Z$ whenever every local homological Conley group is free abelian. The homological perturbation argument is standard \cite{Skoldberg2006} and appears in essentially the relevant filtered form in \cite{HarkerMischaikowSpendlove2021,TorrasCasasYimPennig2025}; our input is the bounded-time geometric construction of the filtered complex to which it applies. The connection-matrix assertion, in the sense of Franzosa \cite[\S 3]{Franzosa1989}, is proved in \Cref{thm:connection-matrix}.

	\subsection*{Context and References}
	
	While making no attempt to provide a comprehensive survey here, let us mention other attempts to extend Morse theory to various degenerate settings. Morse--Bott theory retains a controlled local model near a critical submanifold, and Morse--Bott complexes may be constructed
	using cascades \cite{BanyagaHurtubise2013}, while  \cite{KirwanPenington2021} treats smooth functions with finitely many connected critical components, without a nondegeneracy assumption. Corollary~4.6 of \cite{KirwanPenington2021} gives a spectral sequence with local relative homology on its first page. One may always deform the given degenerate function to a Morse--Smale one, but this approach becomes unsatisfactory when the original dynamics has structure which does not survive even small generic perturbations. A prime example of such structure is equivariance, which becomes indispensable in applications to Yang--Mills \cite{AtiyahBott1983} and geometric invariant theory \cite{Kirwan1984}. Our construction therefore works directly with the original analytic gradient flow and imposes no local normal form on its critical components.
	
	Many papers now build upon the topology of compactified moduli spaces of flow lines outlined in the original flow category preprint of Cohen, Jones and Segal \cite{CohenJonesSegalClassifying}. The impetus for this progress has largely come from profound applications in Floer theory \cite{CohenJonesSegal1995,Cohen2010,AbouzaidBlumberg2024}, but there are also purely combinatorial variants \cite{NandaTamakiTanaka2018, Nanda2019, Hem2024}. On the algebraic side, connection matrices and homology index braids originate in Conley--Franzosa theory \cite{Conley1978,Franzosa1989}, with more modern filtered-chain formulations in \cite{HarkerMischaikowSpendlove2021,TorrasCasasYimPennig2025}. Nicolaescu develops related reconstruction methods under tameness assumptions on the entire flow \cite{Nicolaescu2010}; here subanalyticity is required only on bounded time intervals. Both topology and algebra arise from the same bounded-time geometry which is captured by the landfall category at the level of spaces.
	
	\subsection*{Organisation}  \Cref{sec:blocks} constructs the block systems $\set{N_p}$ and develops the finite-time contact factorisation. \Cref{sec:category} constructs the landfall category and proves the classifying-space theorem. \Cref{sec:compression} performs the cellular Morse compression and identifies
	the landfall complex; the remainder of that section lifts this compression to the
	universal cover and extracts torsion.
	\Cref{sec:linearisation} identifies the interval Conley indices, proves block independence, constructs the spectral sequence and connection matrices, and specialises to the Morse--Bott and Morse settings. Finally, \Cref{sec:examples}
	gives examples.
	
	{\footnotesize
		\subsection*{Acknowledgements} The authors thank the Sydney Mathematics Research Institute for its hospitality. VN is partially supported by EPSRC grant EP/Z531224/1 and AFOSR contract FA9550-22-1-0462. This work was developed with assistance from OpenAI's ChatGPT 6 and Anthropic's Claude Opus 5.5.
	}
	
	\section{Isolating Blocks and Landfall}\label{sec:blocks}
	
	Let $(X,g)$ be a compact real-analytic Riemannian manifold and $f:X\to\R$ a real-analytic function. In this section we place disjoint isolating blocks around the connected components of $\Crit(f)$, follow each orbit from the exit set of one block until it makes landfall in the interior of another, and record which blocks it touches along the way. The resulting contact domains satisfy a truncation law, and for sufficiently tight blocks they recover the flow order on the critical components.
	
	\subsection{Analytic Gradient Flow}  Since $X$ is compact, the negative gradient vector field $-\nabla f$ is complete; we write $(t,x) \mapsto \phi_t(x)$ to indicate the negative gradient flow $\R\times X \to X$ of $f$. By uniqueness of solutions, the flow satisfies
	\[
	\phi_0(x) = x \quad \text{and} \quad \phi_s \circ \phi_t(x) = \phi_{s+t}(x)
	\]
	for all $s,t\in\R$ and $x\in X$. The {\bf critical set} $\Crit(f) \subset X$ consists of all points where $\nabla f$ vanishes.
	
	\begin{proposition}\label{prop:analytic-finiteness}
		With $(X,g)$ and $f$ as above, the following hold.
		\begin{enumerate}[label=\textup{(F\arabic*)},leftmargin=3.6em]
			\item The flow map $\phi:\R\times X\to X$ is real-analytic.
			\item For every $T \in [0,\infty)$, the graph of
			$\phi|_{[-T,T]\times X}$ is compact subanalytic.
			\item Along every nonconstant orbit, we have
			\begin{equation}\label{eq:descent}
				\frac{d}{dt}f \circ \phi_t(x) < 0.
			\end{equation}
			\item The set $\Crit(f)$ is analytic and has finitely many connected components, with $f$ being constant on each one.
		\end{enumerate}
	\end{proposition}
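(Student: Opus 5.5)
We treat the four items separately. Each rests on a standard fact: analytic dependence of ODE solutions on initial data, the finiteness theory of subanalytic sets on a compact analytic manifold, and a one-line computation along flow lines. Throughout, $X$ is compact, so $-\nabla f$ is complete and $\phi$ is defined on all of $\R\times X$.

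For (F1), the metric $g$ and $f$ are analytic, so in analytic charts the vector field $-\nabla f = -g^{ij}\partial_j f\,\partial_i$ has analytic coefficients. By the Cauchy--Kovalevskaya / analytic dependence theorem for ODEs, the local flow is real-analytic in $(t,x)$ near $\{0\}\times X$, say on $(-\eps,\eps)\times X$ with $\eps>0$ uniform by compactness. The group law $\phi_{s+t}=\phi_s\circ\phi_t$ then extends analyticity to all of $\R\times X$: we write $\phi_t=\phi_{t/n}^{\circ n}$ with $|t/n|<\eps$ and use that compositions of analytic maps are analytic.

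For (F2), the restriction $\phi|_{[-T,T]\times X}$ is an analytic map on a neighbourhood of the compact set $[-T,T]\times X$ in the analytic manifold $\R\times X$. Its graph is therefore a closed analytic subset of $\R\times X\times X$ intersected with the semianalytic set $\{|t|\le T\}$. That intersection is semianalytic and relatively compact, hence subanalytic. It is compact as the continuous image of a compact set.

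For (F3), the chain rule gives
\[
\tfrac{d}{dt}f(\phi_t(x)) = \langle \nabla f, -\nabla f\rangle = -|\nabla f(\phi_t(x))|^2 .
\]
If this vanished at some time $t_0$, then $\phi_{t_0}(x)$ would be a zero of the vector field. By uniqueness of solutions the orbit would be constant, contradicting the hypothesis, so the derivative is strictly negative along every nonconstant orbit.

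For (F4), $\Crit(f)$ is the zero set of the analytic function $|\nabla f|^2$, hence analytic. A compact analytic (so subanalytic) set has finitely many connected components, by local finiteness of components of subanalytic sets (\L{}ojasiewicz / Hironaka) together with compactness. The main obstacle is constancy of $f$ on each component $S$. Here we use that subanalytic sets are locally path-connected by piecewise analytic (or $C^1$-by-pieces) arcs, so any two points of $S$ are joined by a finite chain of analytic arcs $\gamma:[0,1]\to S$, analytic on the open interval. Along each arc $(f\circ\gamma)'=df(\gamma')=0$ because $df$ vanishes on $S$, so $f$ is constant on each arc and hence on $S$. If one wants to avoid the arc-connectivity input, an alternative is to apply the \L{}ojasiewicz gradient inequality near each point of $S$ to conclude that $f$ is locally constant on $S$, and then invoke connectedness.
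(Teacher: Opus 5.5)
Your proposal is correct and follows the same route as the paper's proof. Both use analytic dependence on initial data for (F1), subanalyticity of the graph of an analytic map over the compact set $[-T,T]\times X$ for (F2), the chain-rule identity $\frac{d}{dt}f\circ\phi_t(x)=-\|\nabla f(\phi_t(x))\|^2$ together with uniqueness of solutions for (F3), and standard finiteness facts for compact analytic sets for (F4). The only difference is that you supply details the paper leaves to citations, such as extending analyticity to all of $\R\times X$ via the group law and proving that $f$ is constant on each critical component (via analytic arcs or the \L{}ojasiewicz gradient inequality); both arguments are sound.
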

	
	\begin{proof}
		Analytic dependence of solutions on time and initial conditions \cite{Ilyashenko2008} gives the first assertion.  The second follows because an analytic map on the compact set $[-T,T]\times X$ has compact subanalytic graph \cite{BierstoneMilman1988}.  The third follows from the chain rule:
		\[
		\frac{d}{dt}f \circ \phi_t(x) = -\|\nabla f(\phi_t(x))\|^2 
		\]
		along with the fact that the right side equals zero for $x \in \Crit(f)$ and is strictly negative otherwise. The last assertion is standard for compact real-analytic sets; see
		\cite{KrantzParks2002}.
	\end{proof}
	
	We write the decomposition of $\Crit(f)$ into connected components as
	\begin{align}\label{eq:morse-decomposition}
		\Crit(f) = \coprod_{p \in P} S_p,
	\end{align} 
	noting that the indexing set $P$ is finite by assertion (F4) of \Cref{prop:analytic-finiteness}. By the same assertion, $f$ must attain a constant value $c_p$ on each $S_p$. We explicitly allow $c_p = c_q$ for $p \neq q$, and write $q \prec_c p$ whenever $c_q<c_p$. Since $f$ is analytic, the forward and backward limit sets of every point along the flow are singletons (rather than arbitrary connected invariant sets).
	
	\begin{proposition}\label{prop:capture}
		For every $x\in X$ there are unique critical points $\ell_\pm(x)$ such that
		\[
		\lim_{t\to\pm\infty}\phi_t(x)=\ell_\pm(x).
		\]
		If $\ell_-(x)\in S_p$ and $\ell_+(x)\in S_q$, then $c_p\ge c_q$, with equality occurring only for a constant orbit.
	\end{proposition}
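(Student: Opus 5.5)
The plan is to deduce convergence from the Łojasiewicz gradient inequality. The ordering of critical values will then follow from (F3) together with continuity of $f$.

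\emph{Step 1: a uniform Łojasiewicz inequality.} For each $a \in \Crit(f)$, analyticity of $f$ gives a neighbourhood $U_a$ and constants $C_a>0$ and $\theta_a\in(0,1/2]$ such that
\[
|f(y)-f(a)|^{1-\theta_a}\le C_a\|\nabla f(y)\| \quad \text{for all } y\in U_a .
\]
Since $f$ is constant, equal to $c_p$, on each component $S_p$ by (F4), and $S_p$ is compact, finitely many such neighbourhoods cover $S_p$. This yields a neighbourhood $U_p$ of $S_p$ and uniform constants $C,\theta$ with
\[
|f(y)-c_p|^{1-\theta}\le C\|\nabla f(y)\| \quad \text{on } U_p .
\]

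\emph{Step 2: the limit set lies in a single component.} By (F3), $f\circ\phi_t(x)$ is nonincreasing and bounded, so it has a limit $c$ as $t\to+\infty$. Since $X$ is compact, the $\omega$-limit set $\omega(x)$ is nonempty, compact, connected and invariant. Because $\int_0^\infty\|\nabla f(\phi_t x)\|^2\,dt<\infty$ and $f$ is constant equal to $c$ on $\omega(x)$, invariance together with (F3) forces $\omega(x)\subset\Crit(f)$. Connectedness then places $\omega(x)$ inside a single component $S_p$, with $c=c_p$.

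\emph{Step 3: finite arclength (the main obstacle).} The standard estimate is
\[
-\frac{d}{dt}\,(f(\phi_t x)-c_p)^{\theta}=\theta\,(f-c_p)^{\theta-1}\|\nabla f\|^2\ge \frac{\theta}{C}\,\|\nabla f\|=\frac{\theta}{C}\,\|\dot\phi_t\| .
\]
This holds whenever $\phi_t x\in U_p$ and $f(\phi_t x)>c_p$. If instead $f(\phi_{t_0}x)=c_p$ for some $t_0$, then the orbit is constant from $t_0$ on, by (F3) and the monotone limit. The delicate point is that the orbit must stay inside $U_p$ so that the inequality applies. I will use the usual trapping argument: choose $t_0$ with $\phi_{t_0}x$ very close to $S_p$ and $(f-c_p)^\theta$ very small at that time. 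The integrated estimate then bounds the length of the subsequent path by $\frac{C}{\theta}(f(\phi_{t_0}x)-c_p)^\theta$, which is less than the distance from $S_p$ to $\partial U_p$, so the orbit never leaves $U_p$. Hence the path has finite length, so $\phi_t(x)$ is Cauchy and converges to a single point $\ell_+(x)\in S_p$. Uniqueness is immediate. The argument for $t\to-\infty$ is symmetric, applied to $-f$, which has the same critical set.

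\emph{Step 4: the ordering of critical values.} By continuity, $f(\ell_-(x))=\lim_{t\to-\infty}f(\phi_t x)\ge\lim_{t\to+\infty}f(\phi_t x)=f(\ell_+(x))$, that is, $c_p\ge c_q$. If equality holds, then $f\circ\phi_t(x)$ is constant in $t$, so its derivative vanishes identically. By (F3) the orbit is then constant, that is, $x\in\Crit(f)$.
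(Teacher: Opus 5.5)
Your proof is correct and follows the same route as the paper: monotonicity and compactness force the $\omega$-limit set into $\Crit(f)$, a \L{}ojasiewicz-type argument upgrades this to convergence to a single point, and (F3) gives the ordering of critical values. The only difference is that the paper cites the analytic gradient convergence theorem \cite{Lojasiewicz1984,KurdykaMostowskiParusinski2000}, whereas you reprove it via the finite-arclength estimate; your trapping step is in fact unnecessary, since Step 2 already gives $\dist_g(\phi_t(x),S_p)\to 0$ and hence the orbit eventually stays in $U_p$.
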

	
	\begin{proof}
		The limits of $f \circ \phi_t(x)$ exist by compactness and monotonicity, and every accumulation point is critical.  The analytic gradient convergence theorem
		gives convergence to single points \cite{Lojasiewicz1984,KurdykaMostowskiParusinski2000}.  The order statement follows from \Cref{prop:analytic-finiteness}(F3).
	\end{proof}
	
	The following order on $P$ can relate far fewer pairs than the critical-value order $ \prec_c$, and it plays an important role in all of our constructions below.
	\begin{definition} \label{def:flow-order}
		For distinct $p,q\in P$, write $p\leadsto q$ if there exists some $x \in X$ with $\ell_-(x)$ in $S_p$ and $\ell_+(x)$ in $S_q$. The transitive closure $>_F$ of $\leadsto$ is called the {\bf flow-ordering} on $P$.
	\end{definition}
	Thus, $p >_F q$ holds whenever there is a finite chain in $P$ of the form
	\[
	p=p_0\leadsto p_1\leadsto\cdots\leadsto p_m=q.
	\]
	Since every $\leadsto$ relation strictly decreases the critical value, $>_F$ is a strict partial order; its reflexive closure is a partial order, and $p>_F q$ implies $q\prec_c p$. For each $N \subset X$ we write $\Inv(N)$ to indicate the maximal $\phi$-invariant subset of $N$; and given $\delta>0$, we define the open  neighbourhood
	\[
	B_\delta(N) := \set{x \in X \mid \dist_g(x,y) < \delta \text{ for some }y \in N},
	\]
	where $\dist_g$ denotes the distance on $X$ induced by its Riemannian metric $g$.
	
	\begin{lemma}\label{lem:critical-neighbourhoods}
		For all sufficiently small $\delta>0$, the sets
		$\set{\overline{B_\delta(S_p)} \mid p \in P}$ are pairwise disjoint and satisfy $\Inv(\overline{B_\delta(S_p)})=S_p$.
	\end{lemma}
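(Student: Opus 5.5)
Disjointness is immediate. By (F4) of \Cref{prop:analytic-finiteness} the components $S_p$ are finitely many, pairwise disjoint compact sets (closed subsets of the compact $X$). Hence the minimum $\mu:=\min_{p\neq q}\dist_g(S_p,S_q)$ is positive. For $\delta<\mu/2$ the closures $\overline{B_\delta(S_p)}\subset\set{x\mid \dist_g(x,S_p)\le\delta}$ are pairwise disjoint. I will shrink $\delta$ further below, keeping this constraint.

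For isolation, the inclusion $S_p\subset\Inv(\overline{B_\delta(S_p)})$ is clear, since $S_p$ consists of fixed points. For the reverse inclusion I will use \Cref{prop:capture}. Let $x$ lie in $\Inv(\overline{B_\delta(S_p)})$, so its entire orbit stays in $K_\delta:=\set{y\mid\dist_g(y,S_p)\le\delta}$. Then $\ell_\pm(x)$ are critical points lying in the closed set $K_\delta$. Choose $\delta$ small enough that $K_\delta\cap\Crit(f)=S_p$; this is possible because the other components lie at distance at least $\mu$. Then $\ell_-(x),\ell_+(x)\in S_p$, and both limits have the same critical value $c_p$. By the equality clause of \Cref{prop:capture}, the orbit of $x$ is constant, so $x$ is critical and hence $x\in S_p$.

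I expect no real obstacle. The only subtle point is that the argument must not rely on a Morse-type local model. That reliance is avoided because the Łojasiewicz single-limit convergence and strict monotonicity of $f$ (F3) are already packaged in \Cref{prop:capture}: an orbit with both ends in the same component has $f(\ell_-)=f(\ell_+)$, which forces constancy. One might worry about orbits whose limits lie in other components. These are excluded because the limits lie in the closed set $K_\delta$, which meets $\Crit(f)$ only in $S_p$.

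Thus the plan is: take any $\delta<\mu/2$, so that both $\delta<\mu$ and disjointness hold, and then combine the two steps above.
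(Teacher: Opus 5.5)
Your proof is correct and follows the paper's argument. Disjointness comes from the positive separation of the finitely many compact components. For isolation, both asymptotic limits of an orbit trapped in $\overline{B_\delta(S_p)}$ must lie in $S_p$, so they have equal critical values, and strict descent forces the orbit to be constant. Your explicit choice $\delta<\mu/2$, which guarantees that the closed neighbourhood meets $\Crit(f)$ only in $S_p$, spells out a step the paper leaves implicit.
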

	
	\begin{proof}
		The critical components are finitely many disjoint compact sets, so the first assertion holds automatically for sufficiently small $\delta$.  If a complete orbit is contained in $\overline{B_\delta(S_p)}$, \Cref{prop:capture} places both of 	its asymptotic limits in $S_p$.  They therefore have the same critical value, and strict descent \eqref{eq:descent} makes the orbit constant.
	\end{proof}
	
	It follows from the preceding result that \eqref{eq:morse-decomposition} is a {\bf Morse decomposition} for $\phi$ with respect to $>_F$, see \cite[\S I.7.2]{Conley1978}. The following result provides a convenient sufficient condition for distinct elements of $P$ to be related by this order.

	\begin{lemma}\label{lem:broken-flow-order}
		Given $p\ne q$ in $P$, suppose that there are sequences $\set{x_n} \subset X$ and $\set{T_n} \subset \R_{> 0}$ such that
		\[
		\dist_g(x_n,S_p) \to 0
		\quad \text{and} \quad
		\dist_g(\phi_{T_n}(x_n),S_q) \to 0.
		\]
		both hold as $n \to \infty$. Then $q<_F p$.
	\end{lemma}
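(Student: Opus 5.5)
Fix $\delta>0$ as in \Cref{lem:critical-neighbourhoods}, so the closed neighbourhoods $U_r:=\overline{B_\delta(S_r)}$ are pairwise disjoint and $\Inv(U_r)=S_r$. The plan is to take limits of the finite orbit segments $\phi_{[0,T_n]}(x_n)$ and extract a broken chain of complete orbits running from $S_p$ to $S_q$. Passing to a subsequence, assume $x_n\to x_\infty\in S_p$ and $\phi_{T_n}(x_n)\to y_\infty\in S_q$. Since $p\ne q$ and $S_p,S_q$ are at positive distance, continuity of $\phi$ on compact time intervals and invariance of $S_p$ force $T_n\to\infty$.

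\textbf{First exit from $U_p$.} For large $n$ the segment starts inside $B_{\delta/2}(S_p)$ and ends outside $U_p$. Let $s_n\in(0,T_n)$ be the first time with $\phi_{s_n}(x_n)\in\partial U_p$, and pass to a subsequence so that $z_n:=\phi_{s_n}(x_n)\to z\in\partial U_p$. Then $s_n\to\infty$, because otherwise $z$ would be a finite-time image of a point of $S_p$ and so lie in $S_p$. For each fixed $t\ge 0$ we have $\phi_{-t}(z_n)\in U_p$ once $n$ is large, so $\phi_{-t}(z)\in U_p$ for all $t\ge0$. Hence $\ell_-(z)\in U_p\cap\Crit(f)=S_p$. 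Since $z\notin S_p$, \Cref{prop:capture} shows that $\ell_+(z)\in S_{p_1}$ with $c_{p_1}<c_p$. Therefore $p_1\ne p$, and by definition $p\leadsto p_1$.

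\textbf{Induction on critical value.} If $p_1=q$ we are done. Otherwise we repeat the argument with $p_1$ in place of $p$. Since $\phi_t(z)\to\ell_+(z)$, we can pick $t_0$ with $\phi_{t_0}(z)\in B_{\delta/2}(S_{p_1})$. Then $x'_n:=\phi_{s_n+t_0}(x_n)$ satisfies $\dist_g(x'_n,S_{p_1})<\delta$ for large $n$, and the remaining times are $T'_n=T_n-s_n-t_0$. We must check that $T'_n\to\infty$ and that $x'_n$ really approaches $S_{p_1}$ rather than only coming within $\delta$ of it. For the second point, use a diagonal choice: let $t_0=t_k\to\infty$ with $\phi_{t_k}(z)\to\ell_+(z)$, and choose $n_k$ accordingly. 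For the first point, if $T'_n$ stayed bounded, the endpoint $\phi_{T_n}(x_n)$ would converge to a forward image of a point near $S_{p_1}$, hence to $S_{p_1}$, contradicting $y_\infty\in S_q$ with $q\ne p_1$.

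The new sequences again satisfy the hypotheses with $(p_1,q)$, and $c_{p_1}<c_p$. Since $P$ is finite, the procedure terminates at $q$ after finitely many steps. This produces $p\leadsto p_1\leadsto\cdots\leadsto q$, that is, $q<_F p$.

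\textbf{Main obstacle.} The delicate step is the re-seeding between stages: showing that the tail segments still start arbitrarily close to $S_{p_1}$ and still have times tending to infinity. I would handle it with the diagonal subsequence argument above. Alternatively, one can run a strong induction on $c_p$, formulating the claim for all pairs at once so that the inductive hypothesis applies directly to the re-seeded sequences.
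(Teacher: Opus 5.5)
Your proposal is correct and follows essentially the same route as the paper: you take the first exit from the $\delta$-neighbourhood of $S_p$ and a limit point $z$ with $\ell_-(z)\in S_p$ to get $p\leadsto p_1$, then re-seed near $S_{p_1}$ by a diagonal choice and iterate. Where the paper formally inducts on $\#\{r\mid c_q\le c_r\le c_p\}$, your termination argument from finiteness of $P$ and strictly decreasing critical values does the same job.

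One small tightening is needed, and the paper does it. Prove $T_n-s_n\to\infty$ before the diagonal choice: a bounded subsequence would give $\phi_T(z)\in S_q$ and hence make $z$ critical. This is what lets you pick $n_k$ with $T_{n_k}-s_{n_k}>t_k$, so that $T'_k>0$. Your ``hence to $S_{p_1}$'' justification is only valid once $x'_k$ genuinely converges into $S_{p_1}$.
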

	
	\begin{proof}
		Since $p\ne q$ and we have $f(x_n)\to c_p$ and $f(\phi_{T_n}(x_n))\to c_q$ as $n \to \infty$, the strict monotonicity of $f$ along every nonconstant orbit gives $c_q < c_p$. We fix $\delta>0$ as in 		\Cref{lem:critical-neighbourhoods} and proceed by induction on the cardinality $\#\set{r\in P\mid c_q\le c_r\le c_p}$. For large $n$, the point $x_n$ lies in $B_\delta(S_p)$ while
		$\phi_{T_n}(x_n)$ lies in the disjoint set $B_\delta(S_q)$.  Define
		\[
		\sigma_n := \sup\set{t \geq 0 \mid \phi_{[0,t]}(x_n)\subset B_\delta(S_p)}.
		\]
		Then $\sigma_n<T_n$.  If $\sigma_n$ were bounded along a subsequence, we could pass to limits $x_n\to x\in S_p$ and $\sigma_n\to\sigma$ and obtain $\phi_\sigma(x)\in\partial B_\delta(S_p)$, contradicting the invariance of $S_p$. Hence $\sigma_n\to\infty$.
		
		Set $z_n=\phi_{\sigma_n}(x_n)$ and pass to a subsequence converging to some $z$. Then $z\in\partial B_\delta(S_p)$, so $z$ is noncritical.  For every $s>0$ and all sufficiently large $n$ (depending on $s$), we have $\sigma_n\geq s$, and moreover $\phi_{-s}(z_n)$ lies in the closure of $B_\delta(S_p)$. Passing to the limit shows that the entire backward orbit of $z$ lies in this
		closure. By \Cref{prop:capture,lem:critical-neighbourhoods},
		$\ell_-(z)\in S_p$. Again by \Cref{prop:capture}, there is $r_1\in P$ with $\ell_+(z)\in S_{r_1}$.  Since $z$ is noncritical, we get
		$p\leadsto r_1$, so $r_1 <_F p$ and $c_{r_1}<c_p$.
		
		If $r_1=q$, then we are done, so we may assume henceforth that $r_1 \neq q$.
		We claim that $(T_n-\sigma_n) \to \infty$. Otherwise, after passing to a subsequence, we would have $(T_n-\sigma_n) \to T < \infty$, and $
		\phi_{T_n}(x_n)=\phi_{T_n-\sigma_n}(z_n) \to \phi_T(z)\in S_q$. 	Uniqueness for the flow would then make $z$ critical, a contradiction.
		
		Now  we can  choose times $s_k\to\infty$ with
		$\dist_g(\phi_{s_k}(z),S_{r_1})<\frac1{2k}$ and $n_k\ge k$ sufficiently large so that
		\[
		\dist_g\left(\phi_{s_k}(z_{n_k}),S_{r_1}\right)< k^{-1}
		\quad \text{and} \quad
		(T_{n_k}-\sigma_{n_k})>s_k.
		\]
		Set $x'_k=\phi_{\sigma_{n_k}+s_k}(x_{n_k})$ and $
		T'_k=T_{n_k}-\sigma_{n_k}-s_k>0$. Then $\dist(x'_k,S_{r_1})\to0$ and
		\[
		\phi_{T'_k}(x'_k)=\phi_{T_{n_k}}(x_{n_k})
		\longrightarrow S_q.
		\]
		Since $f$ decreases along the orbit, passage to the limit gives
		$c_{r_1}\ge c_q$.  Together with $c_{r_1}<c_p$ this implies that
		$c_q\le c_{r_1}<c_p$.  Hence
		\[
		\#\{r\mid c_q\le c_r\le c_{r_1}\}
		<
		\#\{r\mid c_q\le c_r\le c_p\}.
		\]
		The smallest possible cardinality for $\{r\mid c_q\le c_r\le c_p\}$ is two, since $p\ne q$. In that case there is no intermediate $r_1$, so the preceding case $r_1=q$ already finishes the proof. For larger cardinalities the inductive hypothesis yields $q<_F r_1$.  Together with
		$p\leadsto r_1$, this gives us $q<_F p$ as required.
	\end{proof}
	
	\begin{definition}\label{def:index-pair}
		A pair of compact subsets $E\subset N$ of $X$ is called an {\bf index pair} for a compact isolated invariant set $S$ if three axioms hold:
		\begin{enumerate}[label=\textup{(\roman*)}]
			\item $S \subset (N-E)^\circ$ and  $S = \Inv(\overline{N-E})$;
			\item $E$ is positively invariant\footnote{Explicitly, this means that $z \in E$ and $\phi_{[0,t]}(z) \subset N$ imply $\phi_{[0,t]}(z) \subset E$.} relative to $N$;
			\item every forward orbit leaving $N$ first meets $E$.
		\end{enumerate}
	\end{definition}
	
	Here $E$ is called the {\bf exit set} of the index pair (see \cite[\S III.4]{Conley1978}). Let $\Lambda$ be a commutative unital ring. The relative homology $H_*(N,E;\Lambda)$ is independent of the index pair and is denoted $\Con_*(S;\Lambda)$.  Similarly, the pointed homotopy type of the quotient $N/E$ is independent of the index pair; here and below we use the convention $N/\varnothing=N_+$. We denote it by $h(S)$ and call it the {\bf homotopy Conley index}
	\cite{Salamon1985, Mischaikow2002}.
	
	\subsection{Block Systems} Our goal here is to construct a specific family of index pairs for all the connected components of $\Crit(f)$.
	
	\begin{definition}\label{def:block-system}
		A {\bf block system} for the Morse decomposition \eqref{eq:morse-decomposition} is a number $\eps>0$ and
		pairwise disjoint compact subanalytic sets $\{N_p \mid p \in P\}$ such that
		\begin{enumerate}[label=\textup{(N\arabic*)},leftmargin=3.4em]
			\item $S_p\subset N_p^\circ$ and $N_p \subset f^{-1}[c_p-\eps~,~c_p+\eps]$;
			\item every $y\in\partial N_p$ with $f(y)>c_p-\eps$ satisfies
			$\phi_{(0,\delta)}(y)\subset N_p^\circ$ for some $\delta>0$.
		\end{enumerate}
	\end{definition}
	
	Condition (N2) asks that every boundary point of $N_p$ above the exit level be a strict entrance point, which is the defining feature of the isolating blocks of \cite{ConleyEaston1971}; this motivates our terminology. Fix a block system $\set{\epsilon, N_p}$ for the decomposition $\Crit(f) = \coprod_p S_p$ and for each $p \in P$ define the subset $E_p \subset N_p$ as
	\[
	E_p=N_p\cap f^{-1}(c_p-\eps).
	\]
	
	\begin{lemma}\label{lem:block-properties}
		The following properties hold for every $p\in P$:
		\begin{enumerate}[label=\textup{(\roman*)}]
			\item $S_p\subset N_p^\circ$ and $\Inv(N_p)=S_p$;
			\item $E_p$ is exactly the exit set of $N_p$;
			\item if $x\in E_p$ and $\phi_t(x)\in N_q$ for some $t>0$, then $q\prec_c p$;
			\item $(N_p,E_p)$ is an index pair for $S_p$;
			\item if $c_p=\min f$, then $E_p=\varnothing$.
		\end{enumerate}
	\end{lemma}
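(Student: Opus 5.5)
The plan is to prove the five items in order, using only (N1), (N2), the descent inequality \eqref{eq:descent} from \Cref{prop:analytic-finiteness}(F3), and \Cref{prop:capture,lem:critical-neighbourhoods}.

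For (i), the inclusion $S_p\subset N_p^\circ$ is part of (N1). For the invariant set, let $x\in\Inv(N_p)$. By \Cref{prop:capture} the limits $\ell_\pm(x)$ exist. They lie in $N_p\cap\Crit(f)$, and since the blocks are disjoint and each contains its own component, they lie in $S_p$. To see this, note that any critical point of $N_p$ lies in some $S_q$ with $f$-value in $[c_p-\eps,c_p+\eps]$. I would need $N_p\cap\Crit(f)=S_p$. This is implicit in the definition of a block system: either $\eps$ is small enough that the intervals $[c_q-\eps,c_q+\eps]$ separate distinct critical values, or components with equal values are excluded because $N_p$ and $N_q\supset S_q$ are disjoint. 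Hence $\ell_\pm(x)\in S_p$, so both limits have value $c_p$, and strict descent forces the orbit to be constant. Thus $x\in S_p$.

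For (ii), I would show that a point $y\in N_p$ leaves $N_p$ immediately in forward time only if $f(y)=c_p-\eps$. Interior points do not leave immediately. Boundary points with $f(y)>c_p-\eps$ enter $N_p^\circ$ by (N2). Conversely, if $f(y)=c_p-\eps$ then $y$ is noncritical, since $S_p\subset N_p^\circ$ and $c_p-\eps\neq c_p$. So \eqref{eq:descent} gives $f(\phi_t(y))<c_p-\eps$ for $t>0$, and $\phi_t(y)\notin N_p$ by (N1). This identifies $E_p$ with the exit set. In the same way $E_p$ is positively invariant relative to $N_p$: any forward motion from $E_p$ leaves $N_p$ at once.

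For (iii), take $x\in E_p$ and $t>0$. Strict descent gives $f(\phi_t(x))<c_p-\eps$. If $\phi_t(x)\in N_q$ then $c_q-\eps\le f(\phi_t(x))<c_p-\eps$, so $c_q<c_p$. For (v), $E_p\subset f^{-1}(c_p-\eps)$ is empty whenever $c_p-\eps<\min f$, which holds when $c_p=\min f$.

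The main work is (iv). Axiom (i) of \Cref{def:index-pair} needs $S_p\subset(N_p-E_p)^\circ$, which holds because $S_p\subset N_p^\circ$ and $f=c_p\ne c_p-\eps$ on $S_p$. It also needs $\Inv(\overline{N_p-E_p})=S_p$, which follows from (i) since $\overline{N_p-E_p}\subset N_p$. Axiom (ii) was shown above. For axiom (iii), let a forward orbit from $y\in N_p$ leave $N_p$, and set $\tau=\sup\{t\mid\phi_{[0,t]}(y)\subset N_p\}$. The point $z=\phi_\tau(y)$ lies in $N_p$ by compactness, and it lies on $\partial N_p$. If $f(z)>c_p-\eps$, then (N2) keeps the orbit in $N_p^\circ$ for a short positive time. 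I expect the delicate case to be exactly this one, and I would rule it out as follows. Since $z$ is the last time in $N_p$, there are times $t_n\downarrow\tau$ with $\phi_{t_n}(y)\notin N_p$. This contradicts $\phi_{(0,\delta)}(z)\subset N_p^\circ$. Hence $f(z)=c_p-\eps$, so $z\in E_p$ and the orbit meets $E_p$ on leaving, as required.
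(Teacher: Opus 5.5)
Your proposal is correct and takes essentially the same route as the paper. Disjointness of the blocks plus strict descent gives $\Inv(N_p)=S_p$; descent together with (N2) identifies $E_p$ as the exit set; the inequality $c_q-\eps\le f(\phi_t(x))<c_p-\eps$ gives (iii); and $\overline{N_p-E_p}\subset N_p$ gives the index-pair axioms, with your last-exit-time argument simply spelling out what the paper calls immediate from the exit description.

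One small tidy-up: since $S_q\subset N_q$ and $N_q\cap N_p=\varnothing$ for every $q\ne p$, disjointness alone yields $N_p\cap\Crit(f)=S_p$ whatever the critical values are. Your alternative about $\eps$ separating distinct critical values is therefore unnecessary, and it is not part of Definition~\ref{def:block-system} in any case.
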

	
	\begin{proof}
		A critical point on $\partial N_p$ with value $c_p$ would violate Definition \ref{def:block-system} (N2), so $S_p\subset N_p^\circ$.  If a complete orbit lies in $N_p$, both asymptotic limits lie in $S_p$ by disjointness of the blocks, hence strict descent makes the orbit constant. Thus $S_p$ is the maximal invariant subset of the interior  $(N_p-E_p)^\circ$. If $x\in E_p$, then $f \circ \phi_t(x) <c_p-\eps$ for every $t>0$ by \eqref{eq:descent}, so the orbit leaves $N_p$ immediately.  Conversely, an exit point cannot have value greater than $c_p-\eps$ by Definition \ref{def:block-system} (N2), hence lies in $E_p$.  If an orbit from
		$E_p$ later meets $N_q$, then 
		\[
		c_q-\eps ~ \leq ~ f \circ \phi_t(x) ~ < ~ c_p-\eps,
		\]
		so $q\prec_c p$.  Since
		$S_p\subset N_p-E_p$ and
		$\Inv(\overline{N_p-E_p})\subset\Inv(N_p)=S_p$, the first index pair  axiom holds with the closure convention of \Cref{def:index-pair}; the other two axioms are immediate from the exit description.  Finally, if $c_p=\min f$, then $f^{-1}(c_p-\eps)=\varnothing$ and hence $E_p=\varnothing$.
	\end{proof}
	
	Isolating blocks always exist. We outline a concrete and convenient construction in the proof below, but note that other approaches are possible.
	
	\begin{theorem}\label{thm:blocks-exist}
		The decomposition $\Crit(f) = \coprod_{p \in P}S_p$ admits a block system.
	\end{theorem}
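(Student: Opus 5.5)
We build each $N_p$ directly from the flow, using the sublevel/superlevel structure of $f$ near $S_p$. Fix $\delta>0$ as in \Cref{lem:critical-neighbourhoods}, and write $U_p=\overline{B_\delta(S_p)}$. Choose $\eps>0$ smaller than half the minimum gap between distinct critical values. Then $c_p$ is the only critical value of $f$ in $[c_p-2\eps,c_p+2\eps]$, apart from values shared by other components, and those components are far from $U_p$. By compactness, $\|\nabla f\|$ is bounded below by some $\mu>0$ on $U_p\setminus B_{\delta/2}(S_p)$.

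\textbf{Step 1 (defining the block).} Take a fixed time $T>0$ and set
\[
N_p=\set{x\in f^{-1}[c_p-\eps,c_p+\eps] \mid \phi_{[-T,0]}(x)\subset U_p \text{ or } \ldots}.
\]
The cleanest choice I would try is the \emph{forward-trapping} block: $N_p$ is the set of $x$ with $f(x)\in[c_p-\eps,c_p+\eps]$ such that the forward orbit segment from $x$ down to the level $c_p-\eps$ stays in $W_p:=B_{\delta/2}(S_p)$, intersected with $\overline{W_p}$. Equivalently, $N_p$ is the union of orbit segments $\phi_{[0,\tau(y)]}(y)$, where $y$ ranges over a compact subanalytic set $K\subset f^{-1}(c_p+\eps)\cap W_p$ or over the top of $S_p$, and $\tau(y)$ is the time at which the orbit reaches level $c_p-\eps$.

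\textbf{Step 2 (choosing $\eps$ small relative to $\delta$).} Using Łojasiewicz-type control from \Cref{prop:capture}, we need the following: for $\eps$ small, every point of $f^{-1}[c_p-\eps,c_p+\eps]\cap B_{\delta/4}(S_p)$ flows into or out of the level band before leaving $B_{\delta/2}(S_p)$. Otherwise, a compactness argument as in \Cref{lem:broken-flow-order} would produce a nonconstant orbit inside $U_p$ with both limits in $S_p$, which is impossible by \Cref{lem:critical-neighbourhoods}. This gives $S_p\subset N_p^\circ$, and (N1) holds.

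\textbf{Step 3 (strict entrance, (N2)).} A boundary point $y$ of $N_p$ with $f(y)>c_p-\eps$ lies either on the top level $f=c_p+\eps$, where the flow enters strictly because $f$ decreases, or on a lateral face made of orbit segments. The lateral face is the main obstacle: orbits there are tangent to the boundary, so they are not strict entrance points. I would fix this by thickening inward, replacing the lateral condition with a strictly flow-transverse one. Concretely, I would use a Lyapunov-type function $g_p=\dist^2(\cdot,S_p)$ modified along the flow, $\tilde g(x)=\int_0^{s}g_p(\phi_t x)\,dt$ or a Conley–Easton smoothing, cut out by $\tilde g\le\eta$. Since everything is defined via the flow on bounded time intervals, it stays subanalytic by (F2).

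Alternatively, I would invoke the Conley–Easton existence theorem for isolating blocks \cite{ConleyEaston1971}. Since $S_p$ is isolated by \Cref{lem:critical-neighbourhoods}, that theorem gives a smooth block whose exit set I then replace by $f^{-1}(c_p-\eps)$, cutting with the level band. Finally, I would verify that the cut block is subanalytic by approximating its smooth boundary with a subanalytic one, which is possible because transversality is an open condition. Pairwise disjointness follows from $N_p\subset U_p$.
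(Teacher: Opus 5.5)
\paragraph{Verdict: there is a genuine gap.} It sits exactly where you locate the difficulty, condition (N2). As you note, the lateral boundary of your forward-trapped set in Step~1 consists of orbit segments grazing $\partial W_p$, so those points are not strict entrance points. Neither of your repairs produces a strictly entering lateral face.

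\emph{The averaged distance function.} The function $\tilde g(x)=\int_0^s\dist^2(\phi_t x,S_p)\,dt$ is not monotone along the flow. Its derivative along the flow at $x$ is $\dist^2(\phi_s x,S_p)-\dist^2(x,S_p)$, which is positive wherever orbits move away from $S_p$. This already fails for the nondegenerate saddle $f=x^2-y^2$, whose flow is $(x,y)\mapsto(xe^{-2t},ye^{2t})$. There $\tilde g=ax^2+by^2$ with $b=e^{4s}a>a>0$, so $\tilde g$ strictly increases along the flow on $\{x^2\le y^2\}$ away from the origin. Since $f$ takes every value in $[-\eta/b,0]$ on the ellipse $\{\tilde g=\eta\}$, this ellipse contains boundary points of $\{\tilde g\le\eta\}\cap\{|f|\le\eps\}$ with $-\eps<f\le 0$. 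The flow leaves the set immediately from these points, violating (N2).

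\emph{Cutting a Conley--Easton block.} Intersecting such a block with the level band fails the same way, because it does not move the exit set onto $\{f=c_p-\eps\}$. Take the unit disk around the same saddle; it is an isolating block whose tangencies at $|x|=|y|$ lie on the level $f=0$. After cutting with the band, the circle points with $-\eps<x^2-y^2<0$ are still boundary points of the cut set, and $x^2+y^2$ increases there, so the flow exits.

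\emph{Subanalyticity.} These claims are also unsupported. Your Step~1 set is defined by a condition on unbounded forward time, since points of the stable set of $S_p$ never reach level $c_p-\eps$. A parametric integral of the non-analytic function $\dist^2(\cdot,S_p)$ is not covered by (F2). Subanalytic smoothing of a Conley--Easton block cannot appeal to openness of transversality near its tangency locus.

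\paragraph{The missing idea.} You need a function that strictly decreases along every nonconstant orbit, yet exceeds $f$ only by a quantity vanishing exactly on $\Crit(f)$, so that it can cut out the upper face. The paper takes $\widehat f=f+\mu\|\nabla f\|^2$ with $2\mu\sup_X\|\Hess f\|<1$. This gives
$\frac{d}{dt}\widehat f(\phi_t x)\le-(1-2\mu\sup_X\|\Hess f\|)\|\nabla f(\phi_t x)\|^2$.

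The set $\{f\ge c-\eps,\ \widehat f\le c+\eps\}$ is then compact and semianalytic. It lies in $\{|f-c|\le\eps\}\cap\{\|\nabla f\|^2\le 2\eps/\mu\}$ and shrinks to $\Crit(f)\cap f^{-1}(c)$ as $\eps\to0$, so for small $\eps$ its components around the various $S_p$ are disjoint. Let $N_p$ be the component containing $S_p$. Any boundary point of $N_p$ with $f>c_p-\eps$ must have $\widehat f=c_p+\eps$, and strict decrease of $\widehat f$ gives (N2) at once, with no time quantification anywhere.

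\paragraph{Rescuing your distance-function picture.} A Gromoll--Meyer cut also works. Inside the band, impose $\lambda(f-c_p)+\dist^2(\cdot,S_p)\le r^2$, choosing $r$ small, then $\lambda$ large, then $\eps\le r^2/(2\lambda)$. The lateral face then lies in an annulus where $\|\nabla f\|\ge m>0$. There the decrease rate $\lambda\|\nabla f\|^2$ of the first term beats the growth rate of the second, which is at most $2\dist(\cdot,S_p)\,\|\nabla f\|$. Since $\dist^2(\cdot,S_p)$ is subanalytic, so is the resulting block. The essential step is to add a large multiple of $f$, not to average the distance along the flow.
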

	
	\begin{proof}
		Let $\rho:X \to \R_{\geq 0}$ be the norm-squared of the gradient, i.e., $\rho(x)=\|\nabla f(x)\|^2$. Choose any $\mu>0$ such that $2\mu \cdot \sup_X\|\Hess f\|<1$, where $\Hess f$ denotes the Hessian of $f$. Then the function $\widehat f=f+\mu\rho$ strictly decreases along every nonconstant negative-gradient orbit:
		\[
		\frac{d}{dt}\widehat f(\phi_t(x))
		\leq
		-\left(1-2\mu\sup_{X}\|\Hess f\|\right)
		\|\nabla f(\phi_t(x))\|^2.
		\]
		Choose $\eps>0$ with $2\eps$ smaller than every nonzero critical-value gap. For each critical value $c$ of $f$, we define
		\[
		\mathscr A(c;\eps)
		:=
		\set{x\in X\mid f(x)\ge c-\eps \text{ and } \widehat f(x)\le c+\eps}.
		\]
		For fixed $c=c_p$, this gives a nested family of compact subanalytic sets which decrease to $\Crit(f) \cap f^{-1}(c)$ in the limit as $\eps\to 0$.  Choose pairwise disjoint open neighbourhoods $\set{V_i}$ of the components $\set{S_i}$ of $\Crit(f) \cap \set{f=c_p}$.  By compactness and the preceding nested intersection, for all sufficiently small $\eps > 0$ we have
		\[
		\mathscr A(c;\eps)\subset\bigcup_{c_p=c}V_p.
		\]
		Since each $S_p$ is connected and nonempty, there is a unique connected component $N_p \subset \mathscr A(c_p;\eps)$ which satisfies $S_p \subset N_p^\circ \subset V_p$.  The pairwise disjointness of the $V_p$ ensures that distinct critical components on the same level determine distinct blocks. Across different levels, disjointness is ensured because $2\epsilon < c_p - c_q$ holds whenever $c_p > c_q$, forcing $\mathscr A(c_p;\eps) \cap \mathscr A(c_q;\eps) = \varnothing$. 
		
		If $y\in\partial N_p$ and $f(y)>c_p-\eps$, then
		$\widehat f(y)=c_p+\eps$.  The point $y$ is noncritical for $f$, so it follows as above that $\widehat f$ decreases
		strictly under the forward flow, while $f$ remains larger than
		$c_p-\eps$ for short positive time.  The orbit therefore enters the interior
		of the same connected component of $\mathscr A(c_p;\eps)$ immediately.
	\end{proof}

	It will be useful in subsequent arguments to note that the $N_p$ which we have built in the preceding argument can be made arbitrarily tight around the $S_p$. By construction, we have
	\[
	\mathscr A(c;\eps)
	\subset
	\set{|f-c| \leq \eps} \cap
	\set{\|\nabla f\|^2\le 2\eps/\mu}.
	\]	
	Thus, by making $\epsilon$ sufficiently small, each $N_p$ is constrained to a small ball $B_\delta(S_p)$ around $S_p \subset \Crit(f)$ with respect to the Riemannian metric. 
	
	\begin{definition}\label{def:tight-blocks}
		A block system $\set{\epsilon,N_p}$ is \textbf{$\delta$-tight} if $N_p\subset B_\delta(S_p)$ holds for every $p \in P$.
	\end{definition}

	\subsection{Landfall}
	
	Fix a block system $\set{\epsilon,N_p}$ for \eqref{eq:morse-decomposition}. Write $c_p=f(S_p)$ and let $E_p=N_p\cap\set{f=c_p-\epsilon}$ be the exit set. For each $p\in P$, put $U_p^-:=\bigcup_{q\prec_c p}N_q^\circ$. Since $X$ is compact, every subanalytic subset of $X$, or of $[-T,T]\times X$, is globally subanalytic, i.e.\ definable in the o-minimal structure $\R_{\mathrm{an}}$ \cite{vanDenDries1998,Shiota1997}. We will use throughout that such sets are closed under finite Boolean operations, coordinate projections and fibrewise infima of bounded definable families. The {\bf landfall time} of $x \in E_p$ is defined as
	\begin{align}\label{eq:landfall-time}
		\tau_p(x) := \inf\set{t>0\mid\phi_t(x)\in U_p^-}.
	\end{align}
	For $x\in E_p$ the set over which this infimum is taken is nonempty by \Cref{lem:landfall} below. The subset $\phi_{[0,\tau_p(x)]}(x)  \subset X$ is known as the {\bf landfall transit} of $x \in E_p$. The following result catalogues basic properties of landfall times.
	
	\begin{lemma}\label{lem:landfall}
		If $E_p\neq\varnothing$, then:
		\begin{enumerate}[label=\textup{(\roman*)}]
			\item\label{landfall:positive} there is $\eta_p>0$ such that the intersection
			$\phi_{[0,\eta_p]}(E_p)\cap\bigcup_{q\prec_c p}N_q$ is empty;
			\item\label{landfall:finite} $\tau_p(x)$ is finite for every $x\in E_p$;
			\item\label{landfall:destination} there is a unique $q\prec_c p$ such that
			$\phi_{\tau_p(x)}(x)\in\partial N_q$ and the orbit enters the interior $N_q^\circ$
			for arbitrarily small times $t > \tau_p(x)$;
			\item\label{landfall:usc} $\tau_p$ is upper semicontinuous;
			\item \label{landfall:no-ret} if $x\in E_p$, then $\phi_t(x)\notin N_p$ for every $t>0$.
		\end{enumerate}
	\end{lemma}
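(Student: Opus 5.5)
The approach is to prove the five items in the order (v), (i), (ii), (iii), (iv). Throughout, the main tools are strict descent \eqref{eq:descent}, compactness of $E_p$ and of the blocks, \Cref{prop:capture}, and the fact that $2\eps$ is smaller than every nonzero critical-value gap.

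For \ref{landfall:no-ret}, take $x\in E_p$. Then $f(x)=c_p-\eps$, and $x$ is noncritical because $|f-c_p|=\eps$ there while critical values in the band are $c_p$. By \eqref{eq:descent}, $f(\phi_t(x))<c_p-\eps$ for all $t>0$. On the other hand $N_p\subset f^{-1}[c_p-\eps,c_p+\eps]$, so $\phi_t(x)\notin N_p$.

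For \ref{landfall:positive}, if $q\prec_c p$ then $c_q+\eps<c_p-\eps$ by the gap condition, and $N_q\subset\{f\le c_q+\eps\}$. The continuous function $(t,x)\mapsto f(\phi_t(x))$ equals $c_p-\eps$ on the compact set $\{0\}\times E_p$. Uniform continuity on $[0,1]\times E_p$ then gives $\eta_p>0$ with $f>c_q+\eps$ on $\phi_{[0,\eta_p]}(E_p)$ for every $q\prec_c p$ (there are finitely many).

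For \ref{landfall:finite}, fix $x\in E_p$. By \Cref{prop:capture} the point $\phi_t(x)$ converges to some $\ell_+(x)\in S_q$. Since $f$ strictly decreases from $c_p-\eps$, we get $c_q<c_p$, so $q\prec_c p$. Because $S_q\subset N_q^\circ$, the orbit lies in $N_q^\circ\subset U_p^-$ for all large $t$, and hence $\tau_p(x)<\infty$.

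For \ref{landfall:destination}, set $\tau=\tau_p(x)$. By \ref{landfall:positive}, $\tau\ge\eta_p>0$. The set $U_p^-$ is open and $\phi_\tau(x)$ lies in the closure of $\{\phi_t(x):t>\tau\}\cap U_p^-$. The point $\phi_\tau(x)$ also lies in the finite union of compact sets $\bigcup_{q\prec_c p}N_q$, and by minimality of $\tau$ it does not lie in $U_p^-$. Hence $\phi_\tau(x)\in\partial N_q$ for some $q$, and this $q$ is unique because the blocks are disjoint. It remains to show that the orbit enters $N_q^\circ$ for arbitrarily small $t>\tau$, and not the interior of some other block. A different $N_{q'}$ is at positive distance from $N_q$, so times $t$ slightly above $\tau$ with $\phi_t(x)\in U_p^-$ must have $\phi_t(x)\in N_q^\circ$. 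The infimum being $\tau$ provides such times arbitrarily close to $\tau$. One can sharpen this with (N2): if $f(\phi_\tau(x))>c_q-\eps$, the orbit lies in $N_q^\circ$ on a whole interval $(\tau,\tau+\delta)$. The other possibility is $\phi_\tau(x)\in E_q$; then by \ref{landfall:no-ret} applied to $q$ it would never return to $N_q$, which contradicts the existence of such times. So the exit case is excluded.

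For \ref{landfall:usc}, fix $x\in E_p$ and $s>\tau_p(x)$ with $\phi_s(x)\in U_p^-$; such $s$ exist arbitrarily close to $\tau_p(x)$ by the definition of the infimum. Since $U_p^-$ is open and the flow is continuous, $\phi_s(x')\in U_p^-$ for all $x'\in E_p$ near $x$. This gives $\tau_p(x')\le s$, so $\limsup_{x'\to x}\tau_p(x')\le\tau_p(x)$.

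I expect the main obstacle to be \ref{landfall:destination}: the uniqueness, and ruling out a landfall point sitting in the exit set of $N_q$. The no-return argument handles this, but it needs the careful observation that the approaching times must lie in $N_q^\circ$ specifically, which rests on the positive distance between blocks.
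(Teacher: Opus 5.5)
The gap is in (i). It comes from your standing assumption that $2\eps$ is smaller than every nonzero critical-value gap. That inequality is not part of \Cref{def:block-system}. It is only a choice made inside the construction of \Cref{thm:blocks-exist}, whereas the lemma is asserted for an arbitrary block system satisfying (N1), (N2) and pairwise disjointness. Without it, $q\prec_c p$ does not give $c_q+\eps<c_p-\eps$.

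For example, suppose $f$ on $S^2$ has maxima $p,q$ with values $1$ and $0.9$, a saddle at $0.5$ and a minimum at $0$. Then the following four sets form a block system with $\eps=0.3$: the component of $\{f\ge 0.7\}$ containing $p$, the component of $\{f\ge 0.6\}$ containing $q$, the region $\{0.2\le f\le 0.55\}$, and the disk $\{f\le 0.1\}$. Here $N_q$ meets the level $\{f=0.7\}$ on which $E_p$ lies. So knowing that $f$ stays near $c_p-\eps$ along $\phi_{[0,\eta]}(E_p)$ says nothing about avoiding $N_q$.

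The repair is the paper's argument, which uses disjointness instead of levels. The sets $E_p\subset N_p$ and $\bigcup_{q\prec_c p}N_q$ are disjoint and compact, hence at some positive distance $\delta$. Uniform continuity of $\phi$ on $[0,1]\times E_p$ then gives $\eta_p>0$ with $\dist_g(\phi_t(x),x)<\delta$ for all $x\in E_p$ and $t\in[0,\eta_p]$.

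The same unavailable hypothesis is behind your claim in (v) that points of $E_p$ are noncritical, which (ii) also uses. Instead, observe that any critical point of $N_p$ lies in some $S_r\subset N_r$. Disjointness forces $r=p$, so its value is $c_p\neq c_p-\eps$.

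With these two repairs the rest of your proof is correct and follows the paper. Your (iv) is the paper's argument essentially verbatim. Your (v) is the level computation behind \Cref{lem:block-properties}(iii), which the paper simply cites. In (iii), you use the positive distance between distinct blocks to place the approximating times in $N_q^\circ$; this is an equivalent substitute for the paper's passage to a subsequence via finiteness of $P$. Your additional exclusion of $\phi_{\tau_p(x)}(x)\in E_q$ is also correct, and is the observation the paper later uses in \Cref{prop:chambers}.
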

	
	\begin{proof}
		For \ref{landfall:positive}, the compact disjoint sets
		$N_p$ and $\bigcup_{q\prec_c p}N_q$ have positive distance.  Uniform continuity of the flow on $[0,1]\times E_p$ therefore supplies $\eta_p>0$ such that no point of $E_p$ can reach a lower block in time at most $\eta_p$. Turning to \ref{landfall:finite}, the forward limit of $x$ lies in some lower critical set by
		\Cref{prop:capture,lem:block-properties}, so the orbit eventually enters a
		lower block.  Choose times decreasing to $\tau_p(x)$ at which the orbit lies
		in lower block interiors.  Finiteness of $P$ and closedness of the blocks
		place the terminal point on the boundary of a single lower block; disjointness
		gives uniqueness, and the chosen sequence gives arbitrarily small later times
		in that block interior. If $t>\tau_p(x)$ is chosen with
		$\phi_t(x)\in U_p^-$, every point in the open neighbourhood
		$\phi_t^{-1}(U_p^-)$ has landfall time at most $t$.  This proves \ref{landfall:usc}; the preceding limiting argument proves
		\ref{landfall:destination}. Finally,  \ref{landfall:no-ret} follows from assertion (iii) of \Cref{lem:block-properties}.
	\end{proof}
	
	The unique $q \in P$ guaranteed by \Cref{lem:landfall}\ref{landfall:destination} is called the {\bf landfall destination} of the given $x\in E_p$ and denoted $\dest_p(x)$. The following result establishes a local lower bound and a global upper bound on landfall times.	
	
	\begin{proposition}\label{prop:uniform-transit}
		The maximal $\phi$-invariant subset of $G := X-\bigcup_{p\in P}N_p^\circ$ is empty. There is $T_G<\infty$ such that every orbit segment contained in $G$ has length less than $T_G$; moreover, every landfall transit satisfies
		$\phi_{[0,\tau_p(x)]}(x)\subset G$ and $
		\eta_p\le\tau_p(x)<T_G$ whenever $E_p\neq\varnothing$, where $\eta_p >0$   is as in Lemma \ref{lem:landfall}(i).
	\end{proposition}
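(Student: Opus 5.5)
The proof has three parts: show that $G$ contains no complete orbit, extract a uniform time bound $T_G$ from this by compactness, and then locate landfall transits inside $G$.

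\emph{Step 1: $\Inv(G)=\varnothing$.} Let $x$ be a point whose entire orbit lies in $G$. By \Cref{prop:capture}, $\ell_+(x)$ lies in some $S_q$. By (N1) and \Cref{lem:block-properties}(i), $S_q\subset N_q^\circ$, and $N_q^\circ$ is an open neighbourhood of $\ell_+(x)$. Hence $\phi_t(x)\in N_q^\circ$ for all large $t$, contradicting $\phi_t(x)\in G$.

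\emph{Step 2: the uniform bound $T_G$.} Argue by contradiction. Suppose there are orbit segments $\phi_{[0,T_n]}(x_n)\subset G$ with $T_n\to\infty$. Set $y_n=\phi_{T_n/2}(x_n)$; then $\phi_{[-T_n/2,T_n/2]}(y_n)\subset G$. Since $G$ is closed, $X$ is compact, and the flow is continuous, a subsequential limit $y$ of the $y_n$ satisfies $\phi_s(y)\in G$ for every $s\in\R$. This contradicts Step 1. Consequently some finite $T_G$ bounds the length of every orbit segment contained in $G$. For the strict inequality, pick any such bound and add $1$.

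\emph{Step 3: landfall transits lie in $G$.} Take $x\in E_p$. For $0<t<\tau_p(x)$ we must show $\phi_t(x)\notin N_r^\circ$ for every $r$, and there are three cases.
\begin{enumerate}[label=\textup{(\alph*)}]
\item If $r\prec_c p$, this holds by the definition of $\tau_p$ as an infimum.
\item If $r=p$, this is \Cref{lem:landfall}\ref{landfall:no-ret}.
\item If $r\neq p$ and $r\not\prec_c p$, then $c_r\ge c_p$, so $c_r-\eps>c_p-\eps>f(\phi_t(x))$ by strict descent along nonconstant orbits (\Cref{prop:analytic-finiteness}(F3)), and (N1) excludes $\phi_t(x)$ from $N_r$.
\end{enumerate}
The endpoints also lie in $G$. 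At $t=0$, $x\in E_p$ lies on $\partial N_p$ because it is an exit point, and it is not in any other block by disjointness. At $t=\tau_p(x)$, the point lies on $\partial N_q$ by \Cref{lem:landfall}\ref{landfall:destination}, and disjointness again rules out the interiors of the other blocks. Hence $\phi_{[0,\tau_p(x)]}(x)\subset G$, and Step 2 gives $\tau_p(x)<T_G$.

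\emph{Step 4: the lower bound.} By \Cref{lem:landfall}\ref{landfall:positive}, $\phi_t(x)\notin U_p^-$ for $t\in(0,\eta_p]$, so $\tau_p(x)\ge\eta_p$.

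The main obstacle is the compactness argument in Step 2. The centring trick is essential there: a limit of segments that start at $x_n$ yields only a forward-invariant ray, not a complete orbit, and Step 1 says nothing about rays. Choosing midpoints produces a full orbit inside $G$ and reduces the claim to Step 1.
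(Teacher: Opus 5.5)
Your proof is correct and follows the paper's argument. The paper likewise excludes complete orbits in $G$ via forward limits, and obtains $T_G$ by the same midpoint trick, phrased through the nested compact sets $G_{(n)}=\{x\in G\mid\phi_{[-n,n]}(x)\subset G\}$ with empty intersection rather than through a convergent sequence; it then places the transit in $G$ using \Cref{lem:block-properties}(iii), which your three-case check unpacks.

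One small correction to your closing remark: the centring is convenient but not essential. The argument of Step 1 uses only the forward orbit, so it also rules out a forward ray $\phi_{[0,\infty)}(y)\subset G$, which is what the limit of the starting points $x_n$ would give.
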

	
	\begin{proof}
		A complete orbit in $G$ would have a forward limit in some
		$S_q\subset N_q^\circ$, a contradiction.  For $n\ge1$ consider
		\[
		G_{(n)} := \set{x\in G\mid \phi_{[-n,n]}(x)\subset G}.
		\]
		These compact sets are nested with empty intersection, so $G_{(n_0)}=\varnothing$ for some $n_0$; take $T_G=2n_0$.  If an orbit segment of length at least $T_G$ were contained in $G$, its midpoint would belong to $G_{(n_0)}$, a contradiction. Before time $\tau_p(x)$ the orbit cannot meet a block interior: any such block must have lower critical value by \Cref{lem:block-properties}(iii), contradicting the definition of landfall.  The terminal point lies on a block boundary by \Cref{lem:landfall}.  Thus the entire landfall transit lies in $G$, giving the upper bound.  The lower bound is \Cref{lem:landfall}\ref{landfall:positive}.
	\end{proof}
	
	A significant benefit of the finite bound $T_G$ established above is that the landfall time functions inherit subanalyticity.
	
	\begin{lemma}\label{lem:landfall-subanalytic}
		For every $p \in P$, the landfall time function $\tau_p:E_p\to\R$ is subanalytic.
	\end{lemma}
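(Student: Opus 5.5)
The plan is to write the graph of $\tau_p$ as a first-order definable set in $\R_{\mathrm{an}}$, using only bounded-time data; the bound $T_G$ from \Cref{prop:uniform-transit} is what makes this possible. By (F2) of \Cref{prop:analytic-finiteness}, the graph $\Gamma_T$ of $\phi|_{[-T,T]\times X}$ is compact subanalytic for $T:=T_G$, and hence globally subanalytic. The sets $E_p = N_p\cap f^{-1}(c_p-\eps)$ and $U_p^- = \bigcup_{q\prec_c p} N_q^\circ$ are subanalytic, since interiors of subanalytic sets in a compact analytic manifold are subanalytic. Consider
\[
W_p := \set{(x,t)\in E_p\times(0,T] \;\middle|\; \phi_t(x)\in U_p^-}.
\]
This set is the projection to the $(x,t)$-coordinates of $\Gamma_T\cap\bigl(E_p\times(0,T]\times U_p^-\bigr)$. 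Projection preserves definability, so $W_p$ is definable.

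Next, for each $x\in E_p$, I would show that the fibre $(W_p)_x$ is nonempty and that its infimum equals $\tau_p(x)$. By \Cref{lem:landfall}(ii),(iii) and \Cref{prop:uniform-transit}, we have $\tau_p(x)<T_G$. Moreover, the orbit enters $N_q^\circ$ at times arbitrarily close to $\tau_p(x)$ from above, so such times already lie in $(0,T]$. Hence truncating the time range at $T$ does not change the infimum in \eqref{eq:landfall-time}: any time greater than $T$ at which the orbit lies in $U_p^-$ exceeds times in $(W_p)_x$. Therefore $\tau_p(x)=\inf (W_p)_x$.

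Finally, the graph of $\tau_p$ is
\[
\set{(x,s)\in E_p\times[0,T] \;\middle|\; \forall t\,((x,t)\in W_p\Rightarrow s\le t)\ \wedge\ \forall \varepsilon>0\ \exists t\,((x,t)\in W_p\wedge t<s+\varepsilon)}.
\]
This is the fibrewise infimum of a bounded definable family, so it is definable in $\R_{\mathrm{an}}$. Its domain $E_p$ is compact and its values lie in the bounded interval $[\eta_p,T_G]$, so a globally subanalytic set of this kind is subanalytic. Hence $\tau_p$ is a subanalytic function. If $E_p=\varnothing$ there is nothing to prove.

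The main obstacle is the justification that bounded time suffices, that is, the equality of the truncated and untruncated infima. That equality is exactly what \Cref{prop:uniform-transit} provides. One might worry that the full flow on $\R\times X$ is not globally subanalytic, but it is never used here: only $\Gamma_{T_G}$ enters the construction.
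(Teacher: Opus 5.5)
Your proposal is correct and follows essentially the same route as the paper: both rest on the uniform bound $\tau_p<T_G$ from \Cref{prop:uniform-transit}, on bounded-time subanalyticity of the flow, and on definability of fibrewise infima in $\R_{\mathrm{an}}$. The difference is cosmetic. The paper takes a separate infimum $u_{qp}$ of entry times into each $N_q^\circ$, with default value $T_G+1$ on empty fibres, and then a finite minimum over $q\prec_c p$. You instead take the union over $q$ first, which avoids the default value and makes explicit why truncating the time range at $T_G$ leaves the infimum unchanged.
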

	
	\begin{proof}
		The assertion is vacuous for $E_p=\varnothing$; otherwise, for each $q\prec_c p$ define
		\[
		A_{qp}=
		\set{(x,t)\in E_p\times(0,T_G]\mid \phi_t(x)\in N_q^\circ}.
		\]
		This set is subanalytic since we only flow for bounded time. For each $x \in E_p$ we let $A_{qp,x} \subset (0,T_G]$ be the set of all $t$ such that $(x,t) \in A_{qp}$; each such set is a coordinate projection of $A_{qp}$ and hence subanalytic. Thus, the function $u_{qp}:E_p \to (0,T_G+1]$ given by
		\[
		u_{qp}(x) = \begin{cases}
			\inf A_{qp,x} & A_{qp,x} \neq \varnothing \\
			T_G + 1 & \text{otherwise}
		\end{cases}
		\] is subanalytic.  Since $P$ is finite and every transit lands before $T_G$ by \Cref{prop:uniform-transit}, we have
		$\tau_p=\min_{q\prec_c p}u_{qp}$. Since the minimum is indexed over a finite set, the conclusion follows.
	\end{proof}
	
	We now decompose each exit set $E_p$ based on the collection of isolating blocks $N_q$, indexed over $q \prec_c p$, encountered by the positive orbits prior to landfall. 
	
	\begin{definition}\label{def:contact}
		Given $q\prec_c p$ in $P$, a point $x\in E_p$ is said to {\bf contact} $N_q$ if there exists
		$t\in(0,\tau_p(x)]$ such that $\phi_t(x)\in\partial N_q$; the {\bf contact domain} $\mathcal C_{qp}\subset E_p$ consists of all $x \in E_p$ which contact $N_q$.
	\end{definition}
	
	Note that requiring $t \leq \tau_p(x)$ in the preceding definition forces $\phi_s(x)\notin N_r^\circ$ for every $s \in (0,t)$ and every $r \in P$: blocks with $r\prec_c p$ are excluded by the definition \eqref{eq:landfall-time} of $\tau_p$, and all others by \Cref{lem:block-properties}(iii).

	\begin{lemma}\label{lem:contact-branches}
		For every $q\prec_c p$:
		\begin{enumerate}[label=\textup{(\roman*)}]
			\item each $x\in\mathcal C_{qp}$ has a unique contact time
			$\tau_{qp}(x) \in [\eta_p,T_G)$ such that $\phi_{\tau_{qp}(x)}(x) \in \partial N_q$;
			\item $\mathcal C_{qp}$ is compact and subanalytic, and
			$\tau_{qp}:\mathcal C_{qp} \to \R$ is continuous and subanalytic;
			\item the map $x \mapsto \phi_{\tau_{qp}(x)}(x)$ 
			is a continuous subanalytic embedding
			$\chi_{qp}:\mathcal C_{qp}\to\partial N_q$.
		\end{enumerate}
	\end{lemma}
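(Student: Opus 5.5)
The plan is to establish (i) first, since (ii) and (iii) follow from it together with the bounded-time subanalyticity of \Cref{prop:analytic-finiteness}(F2) and the landfall bounds of \Cref{prop:uniform-transit}.

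For (i), let $x\in\mathcal C_{qp}$. By definition there is some $t\in(0,\tau_p(x)]$ with $\phi_t(x)\in\partial N_q$. \Cref{lem:landfall}\ref{landfall:positive} gives $t\ge\eta_p$, and \Cref{prop:uniform-transit} gives $t\le\tau_p(x)<T_G$. For uniqueness, the key observation is that at a contact time $t$ the point $y=\phi_t(x)$ lies on $\partial N_q$ but the orbit has not yet entered $N_q^\circ$. I split into cases according to $f(y)$.
\begin{itemize}
\item If $f(y)>c_q-\eps$, then (N2) forces $\phi_s(x)\in N_q^\circ$ for all $s\in(t,t+\delta)$. Hence $\tau_p(x)\le t$, which together with $t\le\tau_p(x)$ gives $t=\tau_p(x)$.
\item If $f(y)=c_q-\eps$, then $y\in E_q$, and \Cref{lem:landfall}\ref{landfall:no-ret} shows the orbit never returns to $N_q$ after $t$.
\item The case $f(y)<c_q-\eps$ is impossible by (N1).
\end{itemize}
In the first case $t$ is the last time in $[0,\tau_p(x)]$, so any earlier contact time $t'<t$ must fall in the second case. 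But then the orbit never meets $N_q$ after $t'$, contradicting $\phi_t(x)\in\partial N_q$. Similarly, two contact times both of the second kind would contradict \Cref{lem:landfall}\ref{landfall:no-ret}. So the contact time is unique.

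For (ii), subanalyticity comes from writing
\[
\mathcal C_{qp}=\mathrm{pr}_1\set{(x,t)\in E_p\times[\eta_p,T_G] \mid \phi_t(x)\in\partial N_q,\ t\le\tau_p(x)}.
\]
This is subanalytic by (F2) and \Cref{lem:landfall-subanalytic}, and by (i) it is the graph of $\tau_{qp}$, so $\tau_{qp}$ is a subanalytic function.

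Compactness and continuity are where I expect the main obstacle, because $\tau_p$ is only upper semicontinuous (\Cref{lem:landfall}\ref{landfall:usc}). Take $x_n\in\mathcal C_{qp}$ with $x_n\to x$ and $t_n=\tau_{qp}(x_n)\to t$. Then $\phi_t(x)\in\partial N_q$, since $\partial N_q$ is closed. It remains to show $t\le\tau_p(x)$. Suppose instead $\tau_p(x)<t$. Then some $s<t$ has $\phi_s(x)\in N_r^\circ$ for some $r\prec_c p$. This is an open condition, so $\phi_s(x_n)\in N_r^\circ$ for large $n$ with $s<t_n$. That contradicts the remark after \Cref{def:contact}, which says that before a contact time the orbit meets no block interior. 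Hence $x\in\mathcal C_{qp}$, so $\mathcal C_{qp}$ is closed in the compact set $E_p$. The same argument together with uniqueness from (i) shows the limit time is $\tau_{qp}(x)$. Since times lie in the compact interval $[\eta_p,T_G]$, every subsequence of $\tau_{qp}(x_n)$ has a further subsequence converging to $\tau_{qp}(x)$, which gives continuity.

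For (iii), $\chi_{qp}$ is continuous and subanalytic because it is the composite of $x\mapsto(\tau_{qp}(x),x)$ with the flow on bounded time. For injectivity, suppose $\chi_{qp}(x)=\chi_{qp}(x')$. Then $x'=\phi_s(x)$ for $s=\tau_{qp}(x)-\tau_{qp}(x')$, and both $x,x'$ lie on the level set $f=c_p-\eps$. Strict descent \eqref{eq:descent} along this noncritical orbit forces $s=0$, so $x=x'$. A continuous injection from a compact space into a Hausdorff space is a homeomorphism onto its image, so $\chi_{qp}$ is an embedding.
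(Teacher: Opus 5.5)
Your proof is correct and follows essentially the paper's route. It uses the same two-case uniqueness argument via (N2) and \Cref{lem:landfall}\ref{landfall:no-ret}, the same closedness of the contact-pair set in $E_p\times[\eta_p,T_G]$ to get compactness of $\mathcal C_{qp}$ and continuity of $\tau_{qp}$ through its compact graph, and the same strict descent on the common exit level to get injectivity of $\chi_{qp}$. The only difference is cosmetic: your sequential argument that limiting contact times stay $\le\tau_p(x)$ re-proves upper semicontinuity of $\tau_p$, and that property is exactly what makes the hypograph $\{(x,t)\mid t\le\tau_p(x)\}$ closed, so it is what you need rather than an obstacle.
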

	
	\begin{proof}
		By \Cref{lem:landfall}\ref{landfall:positive}, every contact time is at
		least $\eta_p$.  Consider
		the contact pairs in $E_p\times[\eta_p,T_G]$.  They form a closed subset of
		the hypograph $t\le\tau_p(x)$, since the contact point condition is closed; this hypograph is compact because $\tau_p$ is upper semicontinuous and bounded (\Cref{lem:landfall}\ref{landfall:usc} and \Cref{prop:uniform-transit}).  The set is subanalytic by bounded quantification, using
		\Cref{lem:landfall-subanalytic} and bounded-time subanalyticity of the flow.
		Hence it is compact.  Suppose
		$t_1<t_2$ are two $q$-contact times for the same $x$.  At the first contact
		the orbit either enters $N_q^\circ$ immediately, which would force $\tau_p(x)\leq t_1<t_2$, or lies in $E_q$, in which case \Cref{lem:landfall}\ref{landfall:no-ret}
		rules out the second contact.  Thus the contact time is unique.  The compact
		graph theorem gives continuity, while the preceding description identifies the
		contact-pair set with the subanalytic graph of $\tau_{qp}$.  Finally two exit points with the same contact point lie on the
		same orbit and on the same regular level of $f$, hence coincide.
	\end{proof}
	
	\noindent The following result provides a truncation law for the contact maps $\chi_{qp}$.
	
	\begin{theorem}\label{thm:factorisation}
		Let $x\in\mathcal C_{rp}$ satisfy $\tau_{rp}(x)<\tau_p(x)$, 
		and set $y=\chi_{rp}(x)$. Then:
		\begin{enumerate}[label=\textup{(\roman*)}]
			\item we have $y\in E_r$ and
			\begin{align}\label{eq:tail-landfall}
				\tau_r(y)=\tau_p(x)-\tau_{rp}(x).
			\end{align}
			\item for every $q \prec_c r$,
			\[
			y\in\mathcal C_{qr}
			\quad\Longleftrightarrow\quad
			x\in\mathcal C_{qp}\ \text{ and }\ 
			\tau_{qp}(x)>\tau_{rp}(x);
			\]
			\item if $y \in \mathcal C_{qr}$ for $q \prec_c r$, then we have the truncation law
			\begin{align}\label{eq:contact-factorisation}
				\tau_{qr}(y) = \tau_{qp}(x)-\tau_{rp}(x), \qquad
				\chi_{qr}(y)=\chi_{qp}(x).
			\end{align}
		\end{enumerate}
	\end{theorem}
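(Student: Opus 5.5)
The plan is to work entirely along the single orbit $t\mapsto\phi_t(x)$ and translate each assertion into a statement about which times lie in which sets. The key input is the dichotomy already used in the proof of \Cref{lem:contact-branches}: at a contact point $y=\phi_{\tau_{rp}(x)}(x)\in\partial N_r$, either the orbit enters $N_r^\circ$ for arbitrarily small later times, or $y$ lies in $E_r$. By the remark after \Cref{def:contact}, every $r$-contact strictly before $\tau_p(x)$ is preceded by an orbit segment avoiding all block interiors. If the orbit entered $N_r^\circ$ immediately after $\tau_{rp}(x)$, then, since $r\prec_c p$ puts $N_r^\circ\subset U_p^-$, we would get $\tau_p(x)\le\tau_{rp}(x)$. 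This contradicts the hypothesis.

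\textbf{Step 1: showing $y\in E_r$.} Suppose instead that $f(y)>c_r-\eps$. Then $y\in\partial N_r$ is a strict entrance point by (N2), so $\phi_{(0,\delta)}(y)\subset N_r^\circ$, which is the excluded case above. Hence $f(y)=c_r-\eps$, that is, $y\in E_r$.

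\textbf{Step 2: the landfall-time identity \eqref{eq:tail-landfall}.} Write $s=\tau_{rp}(x)$, so that $\phi_t(y)=\phi_{s+t}(x)$. Since $q\prec_c r\prec_c p$, we have $U_r^-\subset U_p^-$. Hence $\tau_p(x)\le s+\tau_r(y)$, because the landfall point of $y$ lies in $U_r^-$ at arbitrarily small later times by \Cref{lem:landfall}(iii). For the reverse inequality, let $q=\dest_p(x)$, so the orbit of $x$ enters $N_q^\circ$ at times just after $\tau_p(x)>s$. We need $q\prec_c r$. This follows from \Cref{lem:block-properties}(iii) applied to $y\in E_r$, with $t=\tau_p(x)-s+\text{small}>0$. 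Then $U_r^-$ is met at times arbitrarily close to $\tau_p(x)-s$, which gives $\tau_r(y)\le\tau_p(x)-s$.

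\textbf{Step 3: parts (ii) and (iii).} A $q$-contact of $y$ at time $t\in(0,\tau_r(y)]$ is the same thing as a $q$-contact of $x$ at time $s+t\in(s,\tau_p(x)]$; this uses Step 2 to match the endpoints. That gives the equivalence in (ii). By uniqueness of contact times (\Cref{lem:contact-branches}(i)), $\tau_{qp}(x)=s+\tau_{qr}(y)$, and then $\chi_{qr}(y)=\phi_{\tau_{qr}(y)}(\phi_s(x))=\chi_{qp}(x)$ by the flow law. This proves (iii). Every contact under consideration automatically has $q\prec_c r$ or $q\prec_c p$, because of \Cref{lem:block-properties}(iii).

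\textbf{Main obstacle.} The delicate point is the reverse inequality in Step 2. It requires that the block where $x$ makes landfall really is below $r$ in critical value, rather than merely below $p$. I will handle it through \Cref{lem:block-properties}(iii) applied at $y\in E_r$, so Step 1 must be proved first.
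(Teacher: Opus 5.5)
Your proposal is correct and follows essentially the same route as the paper. Condition (N2) forces $y\in E_r$, the tail of the $p$-transit from $y$ gives \eqref{eq:tail-landfall}, and the flow identity $\phi_u(y)=\phi_{u+\tau_{rp}(x)}(x)$ together with uniqueness of contact times yields (ii) and (iii). Your Step~2 is slightly more explicit than the paper's: you check via \Cref{lem:block-properties}(iii) that the landfall block lies below $r$, and you cite \Cref{lem:contact-branches}(i) directly where the paper re-argues that no $q$-contact can precede the $r$-contact.
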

	
	\begin{proof}
		Because the $r$-contact is nonterminal, Definition \ref{def:block-system} (N2) forces $y\in E_r$: otherwise the forward orbit would enter $N_r^\circ$ immediately, before time $\tau_p(x)$.  After time $\tau_{rp}(x)$ the original orbit is the orbit from $y$.  By definition of $\tau_p(x)$ it enters no lower block interior before its terminal time, while at that terminal time it enters the interior of the actual destination block.  This proves \eqref{eq:tail-landfall}.
		
		If $y\in\mathcal C_{qr}$ at time $u$, then the original orbit contacts $N_q$ at time $\tau_{rp}(x)+u$.  There cannot be an earlier $q$-contact before the $r$-contact: such a contact would be nonterminal and would lie in $E_q$, and the later $r$-contact would then contradict \Cref{lem:block-properties}(iii) applied to the orbit from $E_q$, since $c_q<c_r$.  Hence this is the unique $q$-contact of the original transit.
		
		Conversely, if the original transit contacts $N_q$ after the $r$-contact, then that contact is also a contact of the tail from $y$.  Writing $t_r=\tau_{rp}(x)$, the flow identity gives
		\[
		\phi_u(y)=\phi_{u+t_r}(x)
		\qquad (u\ge0).
		\]
		Uniqueness of the contact times therefore gives \eqref{eq:contact-factorisation}.  Finally, if
		two contacts occur at times $t_r<t_q$, then the first is nonterminal and the
		preceding argument gives $c_q<c_r$.  Distinct contacts cannot occur at the same
		time because the blocks are disjoint.
	\end{proof}
	
	\begin{corollary}\label{cor:contact-order}
		If a $p$-transit contacts both $N_r$ and $N_q$ with $c_q<c_r<c_p$, then $\tau_{rp}(x)<\tau_{qp}(x)$.
		In particular the earlier of two distinct contacts always has the larger critical value, and every nonterminal contact lies in the corresponding exit set.
	\end{corollary}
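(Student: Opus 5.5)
The plan is to reduce everything to the last paragraph of the proof of \Cref{thm:factorisation}, applied to a single transit. Fix $x\in E_p$ whose landfall transit contacts both $N_r$ and $N_q$, with $c_q<c_r<c_p$. By \Cref{lem:contact-branches}(i), each contact happens at a unique time, $\tau_{rp}(x)$ and $\tau_{qp}(x)$ respectively, and both lie in $[\eta_p,\tau_p(x)]$. These two times cannot coincide, because the blocks are pairwise disjoint and so the same point cannot lie on both $\partial N_r$ and $\partial N_q$. It therefore remains to exclude $\tau_{qp}(x)<\tau_{rp}(x)$.

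Suppose, for contradiction, that $\tau_{qp}(x)<\tau_{rp}(x)$. Then the $q$-contact occurs strictly before $\tau_p(x)$, so it is nonterminal. I will first show that $z:=\chi_{qp}(x)$ lies in $E_q$, using the argument that opens the proof of \Cref{thm:factorisation}:
\begin{itemize}
\item If $f(z)>c_q-\eps$, then (N2) of \Cref{def:block-system} makes the orbit enter $N_q^\circ$ immediately after time $\tau_{qp}(x)$.
\item Since $q\prec_c p$, any such entry lies in $U_p^-$, which forces $\tau_p(x)\le\tau_{qp}(x)$. This contradicts the contact being nonterminal.
\end{itemize}
Hence $z\in E_q$.

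The orbit of $z$ is the tail of the orbit of $x$. It reaches $\partial N_r\subset N_r$ at the positive time $\tau_{rp}(x)-\tau_{qp}(x)$. \Cref{lem:block-properties}(iii), applied to $z\in E_q$, then gives $r\prec_c q$, that is $c_r<c_q$. This contradicts $c_q<c_r$, and so $\tau_{rp}(x)<\tau_{qp}(x)$.

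For the ``in particular'' clause, take two distinct contacts of the same transit, with blocks $N_a$ and $N_b$. Their times are distinct by disjointness, so one of them is earlier, say the contact with $N_a$. That earlier contact is nonterminal, hence lies in $E_a$ by the argument above, and \Cref{lem:block-properties}(iii) gives $c_b<c_a$. So the earlier contact has the larger critical value. The same (N2) argument, applied to any nonterminal contact, places the contact point in the exit set of the block being contacted.

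The only step needing care is the (N2) argument showing a nonterminal contact point lies in the exit set. Here one must observe that the immediate entry into $N_q^\circ$ is an entry into $U_p^-$, which uses $q\prec_c p$ as in \Cref{def:contact}. Everything else is direct bookkeeping with the semigroup identity $\phi_u(z)=\phi_{u+\tau_{qp}(x)}(x)$.
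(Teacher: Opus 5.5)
Your proof is correct and follows the paper's argument. Suppose the $q$-contact comes first; it is then nonterminal and so lies in $E_q$, which the paper takes from \Cref{thm:factorisation}(i) and you re-derive directly from (N2). The later $r$-contact then contradicts \Cref{lem:block-properties}(iii), and simultaneous contacts are excluded by disjointness of the blocks.
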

	
	\begin{proof}
		If the $q$-contact occurred first, it would be nonterminal and hence would lie
		in $E_q$ by \Cref{thm:factorisation}.  The later $r$-contact would then
		contradict \Cref{lem:block-properties}(iii) for the orbit issuing from $E_q$.
		Distinct contacts cannot occur at the same time because the blocks are
		disjoint.  The final statement about nonterminal contacts is the first
		conclusion of \Cref{thm:factorisation}.
	\end{proof}
	
	Easton's symbolic itineraries record ordered block crossings in greater dynamical generality \cite{Easton1975}.  What is special in our setting here is the truncation law
	\eqref{eq:contact-factorisation}. 
	
	\begin{proposition}\label{prop:chambers}
		For every $p \in P$ there is a partition
		\[
		E_p=\coprod_{q\prec_c p}E_{qp},
		\]
		where $E_{qp} := \set{x\in\mathcal C_{qp}\mid\tau_{qp}(x)=\tau_p(x)}$. 
		The restriction of $\tau_p$ to $E_{qp}$ is continuous, and we have $\chi_{qp}(E_{qp})\subset\partial N_q-E_q$. For every $x\in (\overline E_{qp}-E_{qp})$, we have
		$\tau_{qp}(x)<\tau_p(x)$ and $\chi_{qp}(x)\in E_q$.
	\end{proposition}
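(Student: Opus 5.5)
The partition comes directly from \Cref{lem:landfall}\ref{landfall:destination}. For $x\in E_p$, let $q=\dest_p(x)$. Then $\phi_{\tau_p(x)}(x)\in\partial N_q$, so $x$ contacts $N_q$ at time $\tau_p(x)$. Hence $x\in\mathcal C_{qp}$, and by uniqueness of contact times (\Cref{lem:contact-branches}(i)) we get $\tau_{qp}(x)=\tau_p(x)$. Conversely, suppose $x\in E_{qp}$. Then $\phi_{\tau_p(x)}(x)\in\partial N_q$. The terminal point lies on exactly one block boundary because the blocks are disjoint, so $q=\dest_p(x)$. Thus $E_{qp}=\dest_p^{-1}(q)$, and these sets are pairwise disjoint and cover $E_p$.

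Continuity of $\tau_p|_{E_{qp}}$ follows because it coincides there with the continuous function $\tau_{qp}$ of \Cref{lem:contact-branches}(ii).

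For $\chi_{qp}(E_{qp})\subset\partial N_q-E_q$, write $y=\phi_{\tau_p(x)}(x)$. By \Cref{lem:landfall}\ref{landfall:destination}, the orbit of $y$ enters $N_q^\circ$ at arbitrarily small positive times. If instead $y\in E_q$, then $f(\phi_t(y))<c_q-\eps$ for all $t>0$ by \eqref{eq:descent}, so $\phi_t(y)\notin N_q$ for every $t>0$ by (N1). This is a contradiction.

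The boundary statement needs the most care. Take $x\in\overline E_{qp}-E_{qp}$ and a sequence $x_n\in E_{qp}$ with $x_n\to x$. Since $\mathcal C_{qp}$ is compact, $x\in\mathcal C_{qp}$. By continuity of $\tau_{qp}$ and the identity $\tau_{qp}(x_n)=\tau_p(x_n)$, we get $\tau_{qp}(x)=\lim\tau_p(x_n)$. Upper semicontinuity (\Cref{lem:landfall}\ref{landfall:usc}) gives $\limsup\tau_p(x_n)\le\tau_p(x)$, so $\tau_{qp}(x)\le\tau_p(x)$; this also follows from the definition of contact. Equality would put $x$ in $E_{qp}$, so the inequality is strict. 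The $q$-contact is therefore nonterminal, and \Cref{thm:factorisation}(i) gives $\chi_{qp}(x)\in E_q$.

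The main obstacle is keeping the roles of the semicontinuity and the contact-time continuity straight in this last step. Only compactness of $\mathcal C_{qp}$ and continuity of $\tau_{qp}$ are essential there, since the definition of contact already forces $\tau_{qp}\le\tau_p$.
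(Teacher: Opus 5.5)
Your proof is correct and follows essentially the same route as the paper. You get the partition from the landfall destination together with uniqueness of contact times. You exclude $E_q$ for terminal contacts because the orbit enters $N_q^\circ$ immediately, which is the no-return property. For closure points you use compactness of $\mathcal C_{qp}$ to see that the contact is nonterminal, and then condition (N2); you reach (N2) by citing \Cref{thm:factorisation}(i), whose proof is that same argument.
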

	
	\begin{proof}
		The terminal landfall time satisfies the $q$-contact condition for the unique
		destination $q$, and uniqueness of the contact time gives the displayed
		description.  If a terminal contact lay in $E_q$, \Cref{lem:landfall}\ref{landfall:no-ret} would
		contradict immediate entry into $N_q^\circ$.  On the other hand a
		nonterminal point in the closure cannot lie in the entrance part of
		$\partial N_q$, since Definition \ref{def:block-system} (N2) would give entry into $N_q^\circ$ before
		landfall.
	\end{proof}
	
	\noindent We call $E_{qp}$ the $q$-{\bf chamber} of $E_p$. A point of $E_{qp}$ which lies in the closure of a different chamber $E_{rp}$ is said to lie on a {\bf landfall wall} of $E_p$; the next result shows that crossing such a wall factors the terminal contact through an intermediate block.
	
	\begin{corollary}\label{cor:wall}
		Given $q\neq r$ in $P$, assume the existence of a sequence $\set{x_n} \subset E_{rp}$ converging to $x\in E_{qp}$. Then,
		\[
		q\prec_c r\prec_c p
		\quad \text{and} \quad
		\tau_p(x_n)\to\tau_{rp}(x)<\tau_p(x).
		\]
		Setting $y=\chi_{rp}(x)$, one has
		\[
		y\in E_{qr},
		\qquad
		\tau_r(y)=\tau_p(x)-\tau_{rp}(x),
		\qquad
		\chi_{qr}(y)=\chi_{qp}(x).
		\]
	\end{corollary}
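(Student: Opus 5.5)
The plan is a compactness and limit argument for the contact times, followed by an application of \Cref{thm:factorisation}. Let $x_n\in E_{rp}$ converge to $x\in E_{qp}$. The times $\tau_p(x_n)=\tau_{rp}(x_n)$ lie in $[\eta_p,T_G)$ by \Cref{lem:contact-branches}(i), so after passing to a subsequence we may assume $\tau_p(x_n)\to s$. Since $x_n\in\mathcal C_{rp}$ and $\mathcal C_{rp}$ is compact by \Cref{lem:contact-branches}(ii), we get $x\in\mathcal C_{rp}$. Continuity of $\tau_{rp}$ then gives $s=\tau_{rp}(x)$, and the point $\chi_{rp}(x)=\lim\chi_{rp}(x_n)$ lies in $\partial N_r$. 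Because $x$ belongs to the chamber $E_{qp}$ with $q\neq r$, the $r$-contact of $x$ is not terminal: its terminal contact is with $N_q$, and distinct blocks cannot be contacted at the same time. Hence $\tau_{rp}(x)<\tau_p(x)$. Since every subsequence has the same limit $\tau_{rp}(x)$, the full sequence converges, which gives $\tau_p(x_n)\to\tau_{rp}(x)$. Equivalently, this step is the final assertion of \Cref{prop:chambers} applied to $x\in\overline E_{rp}-E_{rp}$.

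Next I derive the order relations. Since $x$ contacts $N_r$ at all, $r\prec_c p$ holds by the definition of contact domains. The point $x$ contacts both $N_r$ (nonterminally) and $N_q$ (terminally, at time $\tau_p(x)>\tau_{rp}(x)$). By \Cref{cor:contact-order}, the earlier of two distinct contacts has the larger critical value, so $c_q<c_r$, that is, $q\prec_c r$.

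For the remaining identities, I apply \Cref{thm:factorisation} to $x\in\mathcal C_{rp}$, which is legitimate because $\tau_{rp}(x)<\tau_p(x)$. Part (i) gives $y\in E_r$ and $\tau_r(y)=\tau_p(x)-\tau_{rp}(x)$. Part (ii), applied with $x\in\mathcal C_{qp}$ and $\tau_{qp}(x)=\tau_p(x)>\tau_{rp}(x)$, gives $y\in\mathcal C_{qr}$. Part (iii) gives $\tau_{qr}(y)=\tau_{qp}(x)-\tau_{rp}(x)=\tau_p(x)-\tau_{rp}(x)=\tau_r(y)$ and $\chi_{qr}(y)=\chi_{qp}(x)$. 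The equality $\tau_{qr}(y)=\tau_r(y)$ is exactly the statement that $y$ lies in the chamber $E_{qr}$.

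The main obstacle is the first step, because $\tau_p$ is only upper semicontinuous, so its convergence along the sequence cannot be assumed. The point of the argument is to route the limit through the continuous function $\tau_{rp}$ on the compact set $\mathcal C_{rp}$, rather than through $\tau_p$ itself. Once that is done, the strict inequality $\tau_{rp}(x)<\tau_p(x)$ comes from uniqueness of the landfall destination together with disjointness of the blocks.
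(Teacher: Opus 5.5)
Your proof is correct and follows essentially the same route as the paper. Compactness of $\mathcal C_{rp}$ and continuity of $\tau_{rp}$ give $\tau_p(x_n)=\tau_{rp}(x_n)\to\tau_{rp}(x)$ (your subsequence detour is harmless but unnecessary), $q\neq r$ plus disjointness of the blocks gives nonterminality, and \Cref{thm:factorisation} gives the rest; the differences are minor: you get $q\prec_c r$ from \Cref{cor:contact-order} instead of \Cref{lem:block-properties}(iii) applied to the tail from $y\in E_r$, and you check $y\in E_{qr}$ explicitly via parts (ii)--(iii) of \Cref{thm:factorisation} and $\tau_{qr}(y)=\tau_r(y)$, which is more explicit than the paper's one-line remark.
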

	
	\begin{proof}
		Since $\mathcal C_{rp}$ is compact, its limit $x$ lies in
		$\mathcal C_{rp}$.  On $E_{rp}$ we have
		$\tau_p=\tau_{rp}$, so continuity of $\tau_{rp}$ yields $\tau_p(x_n)=\tau_{rp}(x_n) \to \tau_{rp}(x)$. The $r$-contact of $x$ is nonterminal because $x$ has destination
		$q \neq r$, hence $\tau_{rp}(x)<\tau_p(x)$.  Applying \Cref{thm:factorisation} at this nonterminal contact gives $y=\chi_{rp}(x)\in E_r$ and $\tau_r(y)=\tau_p(x)-\tau_{rp}(x)$. The terminal $q$-contact of the original transit becomes the terminal $q$-contact of the tail from $y$, so $y\in E_{qr}$ and 
		$\chi_{qr}(y)=\chi_{qp}(x)$.  Finally
		\Cref{lem:block-properties}(iii) applied first to the $p$-transit and then
		to the tail from $E_r$ gives $q\prec_c r\prec_c p$.
	\end{proof}
	
	\subsection{Contact Domains and the Flow Ordering}\label{subsec:tight-flow-order}
	
	As observed after \Cref{thm:blocks-exist}, for every $\delta>0$ the blocks constructed there are $\delta$-tight once the parameter $\eps>0$ is sufficiently small. Our goal here is to show that for sufficiently small $\delta$, the nonemptiness of the contact domains induced by a $\delta$-tight block system allows us to reconstruct the flow order $<_F$ on $P$ from \Cref{def:flow-order}. This order depends only on the gradient flow and is not a property of the chosen block systems.
	
	\begin{theorem}\label{thm:contact-support}
		There is $\delta_0>0$ such that every $\delta_0$-tight block system satisfies
		\[
		\mathcal C_{qp}\ne\varnothing
		\quad\Longrightarrow\quad
		q<_F p.
		\]
	\end{theorem}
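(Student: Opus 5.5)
Argue by contradiction and feed a limiting sequence into \Cref{lem:broken-flow-order}. Suppose no such $\delta_0$ exists. Since $P$ is finite, we may pass to a subsequence and find a fixed pair $q\ne p$ with $q\not<_F p$, together with $\delta_n$-tight block systems $\set{\eps_n,N^{(n)}_p}$ where $\delta_n\to 0$, such that the contact domain $\mathcal C^{(n)}_{qp}$ is nonempty for every $n$. Pick $x_n\in\mathcal C^{(n)}_{qp}\subset E^{(n)}_p$ and set $T_n:=\tau^{(n)}_{qp}(x_n)>0$, using \Cref{lem:contact-branches}(i). Then $x_n\in N^{(n)}_p\subset B_{\delta_n}(S_p)$, so $\dist_g(x_n,S_p)<\delta_n\to0$. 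Likewise $\phi_{T_n}(x_n)=\chi^{(n)}_{qp}(x_n)\in\partial N^{(n)}_q\subset \overline{B_{\delta_n}(S_q)}$, so $\dist_g(\phi_{T_n}(x_n),S_q)\le\delta_n\to0$.

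These sequences satisfy exactly the hypotheses of \Cref{lem:broken-flow-order}, which then gives $q<_F p$. This contradicts our choice of the pair, so some $\delta_0>0$ works.

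The only delicate point is that the times $T_n$ are produced by different block systems, so the uniform bound $T_G$ of \Cref{prop:uniform-transit} is not uniform in $n$. This causes no trouble: \Cref{lem:broken-flow-order} only needs $T_n>0$ and places no restriction on their size. Positivity holds because $\tau^{(n)}_{qp}(x_n)\ge\eta^{(n)}_p>0$.

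We also need $p\ne q$, which holds because contact domains are defined only for $q\prec_c p$. The reduction to a fixed pair $(p,q)$ is harmless, since $P$ is independent of the block system.

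If one prefers a direct argument, one can instead take $\delta_0$ smaller than half the minimum distance between distinct components. Then rerunning the compactness argument inside the proof of \Cref{lem:broken-flow-order} yields the same conclusion. The main obstacle is therefore only to confirm that the statement is about \emph{every} $\delta_0$-tight system, which the contradiction argument over a sequence of systems handles.
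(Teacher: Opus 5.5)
Your proposal is correct and follows the paper's proof essentially verbatim: argue by contradiction over a sequence of increasingly tight block systems, fix a pair $(q,p)$ using finiteness of $P$, pick contact points $x_n$ with times $T_n=\tau^{(n)}_{qp}(x_n)>0$, and note that tightness puts $x_n$ near $S_p$ and $\chi^{(n)}_{qp}(x_n)$ near $S_q$, contradicting \Cref{lem:broken-flow-order}. Your side remarks are also sound: the lemma places no bound on the $T_n$, and $p\ne q$ holds because contact domains are only defined for $q\prec_c p$.
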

	
	\begin{proof}
		We proceed by contradiction. Assume otherwise, so that for every $n$ there is a $1/n$-tight block system and a pair $(q_n,p_n)$ whose contact domain is nonempty but with
		$q_n\not<_F p_n$.  Since $P$ is finite, we may pass to a subsequence on which the pair is constant, say $(q,p)$.  Let
		$\mathcal C^{n}_{qp}$, $\tau^{n}_{qp}$ and $\chi^{n}_{qp}$ be the contact data for the $n$-th block system.  Choose $x_n\in\mathcal C^{n}_{qp}$, set
		$T_n=\tau^{n}_{qp}(x_n)$, and 
		$y_n=\chi^{n}_{qp}(x_n)$.  Tightness gives
		\[
		\dist_g(x_n,S_p)< n^{-1},
		\qquad
		\dist_g(y_n,S_q)< n^{-1},
		\]
		which contradicts \Cref{lem:broken-flow-order}.
	\end{proof}
	
	\begin{proposition}\label{prop:genuine-connections-survive}
		There is $\delta_1\in(0,\delta_0]$ such that every $\delta_1$-tight block
		system satisfies
		\[
		p\leadsto q
		\quad\Longrightarrow\quad
		E_{qp}\ne\varnothing.
		\]
	\end{proposition}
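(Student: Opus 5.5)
The plan is to fix, once and for all, one connecting orbit for each of the finitely many pairs $p\leadsto q$. We then shrink $\delta$ until every $\delta$-tight block system is forced to see that orbit land in $N_q$ without touching any other block.

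\emph{Choosing $\delta_1$.} For every pair $(p,q)$ with $p\leadsto q$, choose $x_{pq}\in X$ with $\ell_-(x_{pq})\in S_p$ and $\ell_+(x_{pq})\in S_q$, and let $\gamma_{pq}:=\overline{\phi_{\R}(x_{pq})}$. By \Cref{prop:capture}, $\gamma_{pq}$ is the orbit together with its two limit points, so it is compact. By strict descent \eqref{eq:descent}, it meets $\Crit(f)$ only in $\ell_\pm(x_{pq})$, so $\gamma_{pq}\cap S_r=\varnothing$ for every $r\notin\{p,q\}$. Hence
\[
d_{pq}:=\min_{r\notin\{p,q\}}\dist_g(\gamma_{pq},S_r)>0 .
\]
Since $P$ is finite, we may set $\delta_1:=\min\bigl(\delta_0,\tfrac12\min_{p\leadsto q}d_{pq}\bigr)>0$. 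Then for any $\delta_1$-tight block system, $N_r\subset B_{\delta_1}(S_r)$ is disjoint from $\gamma_{pq}$ whenever $r\notin\{p,q\}$.

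\emph{Locating an exit point.} Fix a $\delta_1$-tight block system and a pair $p\leadsto q$, and write $x=x_{pq}$. Since $\phi_t(x)\to\ell_-(x)\in S_p\subset N_p^\circ$ as $t\to-\infty$, the orbit lies in $N_p$ for all sufficiently negative $t$. Since $c_q<c_p$ (\Cref{prop:capture}) and $f$ decreases along the orbit, $f\circ\phi_t(x)<c_p-\eps$ for large $t$, so the orbit eventually leaves $N_p$. Put $t_0:=\sup\{t\mid \phi_s(x)\in N_p\text{ for all }s\le t\}$ and $x':=\phi_{t_0}(x)$. Then $x'\in N_p$ by compactness, and points $\phi_t(x)$ with $t$ slightly larger than $t_0$ lie outside $N_p$, so $x'$ is an exit point. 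By \Cref{lem:block-properties}(ii), $x'\in E_p$. I expect this to be the one delicate step. The orbit might leave $N_p$ and approach it again, but this is harmless: we only need the first exit, and \Cref{lem:landfall}\ref{landfall:no-ret} guarantees that the forward orbit of $x'$ never returns to $N_p$.

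\emph{Identifying the chamber.} The forward orbit of $x'$ lies in $\gamma_{pq}$. By the choice of $\delta_1$ it therefore never meets $N_r$ for $r\notin\{p,q\}$, and by \Cref{lem:landfall}\ref{landfall:no-ret} it never meets $N_p$ again. On the other hand, $\phi_t(x')\to\ell_+(x)\in S_q\subset N_q^\circ$, so it does enter $N_q^\circ$. Since $N_q^\circ\subset U_p^-$, landfall occurs, and by \Cref{lem:landfall}\ref{landfall:destination} the landfall destination $\dest_p(x')$ is a block whose boundary the orbit meets. The only candidate is $q$, so $\dest_p(x')=q$. The terminal contact is then a $q$-contact at time $\tau_p(x')$, and uniqueness of contact times (\Cref{lem:contact-branches}) gives $\tau_{qp}(x')=\tau_p(x')$. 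Hence $x'\in E_{qp}$, which is therefore nonempty.
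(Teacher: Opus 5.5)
Your proof is correct and follows the paper's argument: fix one witness orbit for each pair $p\leadsto q$, choose $\delta_1$ below that orbit's distance to every other $S_r$, pass to the first exit point in $E_p$, and note that $N_q$ is the only block the forward orbit can land in. One justification needs repair. Knowing $c_q<c_p$ does not by itself give $f\circ\phi_t(x)<c_p-\eps$ for large $t$, because \Cref{def:block-system} puts no gap condition on $\eps$. Instead, get that the orbit leaves $N_p$ either from $\Inv(N_p)=S_p$ (\Cref{lem:block-properties}(i)) or from its forward limit lying in $N_q^\circ$, which is disjoint from $N_p$.
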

	
	\begin{proof}
		For each pair with $p\leadsto q$, choose one witness orbit $\gamma_{qp}$.
		Its closure consists of the orbit together with its two asymptotic limits and
		contains no critical point outside $S_p\cup S_q$.  Hence
		\[
		\dist\bigl(S_r,\overline{\gamma_{qp}}\bigr)>0
		\qquad \text{for all } r \in P - \set{p,q}.
		\]
		There are only finitely many chosen witnesses, so choose
		$\delta_1\le\delta_0$ smaller than all these positive distances. For a $\delta_1$-tight system, $\gamma_{qp}$ misses every block other than
		$N_p$ and $N_q$.  Since its backward limit lies in
		$S_p\subset N_p^\circ$, the orbit eventually leaves $N_p$; its first exit
		point $x$ lies in $E_p$ by \Cref{lem:block-properties}(ii), and
		\Cref{lem:landfall}\ref{landfall:no-ret} prevents a return to $N_p$.  Its forward limit lies in
		$S_q\subset N_q^\circ$, so the forward orbit from $x$ eventually enters
		$N_q$ and meets no other block first.  Thus $\dest_p(x)=q$, whence
		$x\in E_{qp}$.
	\end{proof}
	
	\begin{corollary}\label{cor:support-sandwich}
		For a $\delta_1$-tight block system, we have
		\[
		p \leadsto q
		\implies 
		\mathcal C_{qp} \neq \varnothing \implies q
		<_F p
		\]
		for every $p \neq q$ in $P$. Consequently the transitive closure of $\set{(q,p) \mid \mathcal{C}_{qp} \neq \varnothing}$ is $<_F$. 
	\end{corollary}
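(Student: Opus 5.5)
The corollary follows by combining \Cref{thm:contact-support} with \Cref{prop:genuine-connections-survive}, together with a short order-theoretic argument. Fix a $\delta_1$-tight block system. Since $\delta_1\le\delta_0$, it is also $\delta_0$-tight, so \Cref{thm:contact-support} gives the second implication: $\mathcal C_{qp}\neq\varnothing \implies q<_F p$.

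For the first implication, suppose $p\leadsto q$. \Cref{prop:genuine-connections-survive} gives $E_{qp}\neq\varnothing$. By \Cref{prop:chambers}, the chamber $E_{qp}$ is the subset of $\mathcal C_{qp}$ on which the $q$-contact is terminal. Hence $\mathcal C_{qp}\supset E_{qp}\neq\varnothing$. One small point to check: $p\leadsto q$ forces $q\prec_c p$ by \Cref{prop:capture}, so the contact domain $\mathcal C_{qp}$ is actually defined via \Cref{def:contact}.

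For the final assertion, let $R=\set{(q,p)\mid \mathcal C_{qp}\neq\varnothing}$ and let $\overline R$ be its transitive closure. The sandwich gives
\[
\set{(q,p)\mid p\leadsto q}\subset R\subset \set{(q,p)\mid q<_F p}.
\]
Taking transitive closures preserves inclusions. The closure of the leftmost relation is $<_F$ by \Cref{def:flow-order}. The rightmost relation is already transitive, since $>_F$ is a transitive closure and hence transitive. So $<_F\subset \overline R\subset <_F$, and therefore $\overline R=<_F$.

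No step here is a genuine obstacle. The only care needed is bookkeeping: keeping the orientation of the pairs $(q,p)$ consistent with the convention $q<_F p \iff p>_F q$, and confirming that a $\delta_1$-tight system is $\delta_0$-tight, so that both earlier results apply to the same block system.
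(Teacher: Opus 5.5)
Your proposal is correct and follows essentially the same route as the paper: the first implication comes from the nonemptiness of the chamber $E_{qp}$ together with $E_{qp}\subset\mathcal C_{qp}$, the second comes from the $\delta_0$-tightness result, and the equality of relations follows by taking transitive closures of the sandwich. Your extra checks are sound and only make explicit what the paper leaves implicit: that $\delta_1\le\delta_0$ makes the system $\delta_0$-tight, that $p\leadsto q$ forces $q\prec_c p$ so $\mathcal C_{qp}$ is defined, and that $<_F$ is already transitive.
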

	
	\begin{proof}
		The two inclusions are \Cref{prop:genuine-connections-survive,thm:contact-support};
		the first uses $E_{qp}\subset\mathcal C_{qp}$.  Taking transitive closures
		gives the equality because $<_F$ is, by definition, the transitive closure of the relation
		$\leadsto$. 
	\end{proof}
	
	Henceforth every block system is assumed to be $\delta_1$-tight, and we call such block systems {\bf tight}. Tight block systems exist: by the remarks following \Cref{thm:blocks-exist}, the construction given there is $\delta_1$-tight once $\eps>0$ is sufficiently small.

	\section{The Landfall Category}\label{sec:category}
	
	Here we assemble the contact data of \Cref{sec:blocks} into a topological category, construct an evaluation map from its classifying space to $X$, and show that this map is a simple homotopy equivalence compatible with the down-set filtrations; this proves Theorem (A). Throughout, fix a tight block system $\set{\epsilon,N_p}_{p\in P}$ for \eqref{eq:morse-decomposition}, equip $P$
	with its flow order, and write $E_p=N_p\cap\{f=c_p-\epsilon\}$ for the exit set. We denote the affiliated contact domains as $\mathcal C_{qp}$ and contact maps as $\chi_{qp}$ --- see \Cref{lem:contact-branches}.
	
	\subsection{Basic Definition}
	
	The truncation law from \Cref{thm:factorisation} assembles the contact data into a topological category.
	
	\begin{definition}\label{def:landfall-category}
		The \textbf{landfall category} $\cL$ associated to the block system $\set{\epsilon,N_p}$ is the strict nonunital topological category defined as follows:
		\begin{enumerate}[label=\textup{(\roman*)}]
			\item the object space is $\Ob(\cL)=\coprod_{p\in P}N_p$,
			\item the morphism space is $\Mor(\cL)=\coprod_{q <_F p}\mathcal C_{qp}$,
			\item the source and target of $x\in\mathcal C_{qp}$ are $s(x)=x$ and $t(x)=\chi_{qp}(x)$,
			\item the composite of $x\in\mathcal C_{qp}$ with $y=\chi_{qp}(x)\in\mathcal C_{rq}$ is $x$, now regarded as a point in $\mathcal C_{rp}$.
		\end{enumerate}
	\end{definition}
	
	\noindent Our first order of business is to show that composition is sensible as defined.
	
	\begin{proposition}\label{prop:category}
		The composition law of $\cL$ is well defined, continuous, and strictly associative.
	\end{proposition}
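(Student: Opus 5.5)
The plan is to reduce every claim to \Cref{thm:factorisation} and \Cref{cor:contact-order}. \emph{Well-definedness} comes first. A composable pair consists of $x\in\mathcal C_{qp}$ and $y\in\mathcal C_{rq}$ with $y=\chi_{qp}(x)$, where $r<_F q<_F p$. I want to show $x\in\mathcal C_{rp}$ and $\chi_{rp}(x)=\chi_{rq}(y)$, so that the composite has the correct source $x$ and the correct target. The hypothesis of \Cref{thm:factorisation} needs the $q$-contact of $x$ to be nonterminal, i.e. $\tau_{qp}(x)<\tau_p(x)$. If instead $\tau_{qp}(x)=\tau_p(x)$, then $y\in\partial N_q-E_q$ by \Cref{prop:chambers}. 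But $y\in\mathcal C_{rq}\subset E_q$, which is a contradiction. So the contact is nonterminal and the theorem applies with $q$ in the role of $r$. Part (ii) gives $x\in\mathcal C_{rp}$ with $\tau_{rp}(x)>\tau_{qp}(x)$, and part (iii) gives $\chi_{rp}(x)=\chi_{rq}(y)$ together with $\tau_{rp}(x)=\tau_{qp}(x)+\tau_{rq}(y)$. The indexing order is also correct: $r<_F p$ holds by transitivity, so $\mathcal C_{rp}$ really is a summand of $\Mor(\cL)$.

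\emph{Continuity} comes next. The space of composable pairs is $\Mor\times_{\Ob}\Mor$, the fibre product of target over source. On each summand it is the subset of $\mathcal C_{qp}\times\mathcal C_{rq}$ cut out by the condition $\chi_{qp}(x)=y$. Composition restricted to such a summand is the projection to the first factor, followed by the inclusion $\mathcal C_{qp}\cap\chi_{qp}^{-1}(\mathcal C_{rq})\hookrightarrow\mathcal C_{rp}$. Both $\mathcal C_{qp}$ and $\mathcal C_{rp}$ carry the subspace topology from $E_p$, so this inclusion is continuous. There are finitely many summands, and continuity on a disjoint union is checked summand by summand, so composition is continuous.

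\emph{Associativity} is last. Take a composable triple $x\in\mathcal C_{qp}$, $y=\chi_{qp}(x)\in\mathcal C_{rq}$, $z=\chi_{rq}(y)\in\mathcal C_{sr}$. Since $\chi_{rp}(x)=z$ by the first step, both bracketings are meaningful. Each bracketing outputs the single point $x$, regarded as a point of $\mathcal C_{sp}$. The bracketing $(zy)x$ first forms $y\in\mathcal C_{sq}$ with $\chi_{sq}(y)=\chi_{sr}(z)$. It then needs $x\in\mathcal C_{sp}$, which follows because $\chi_{qp}(x)=y\in\mathcal C_{sq}$. The bracketing $z(yx)$ uses $x\in\mathcal C_{rp}$ and $\chi_{rp}(x)=z\in\mathcal C_{sr}$. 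Both land in the same component $\mathcal C_{sp}$ at the same point, so they agree strictly.

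The main obstacle is the nonterminality check in the first step, since \Cref{thm:factorisation} requires it as a hypothesis. The argument through \Cref{prop:chambers} handles it, using the facts that $\mathcal C_{rq}\subset E_q$ and that terminal contacts avoid $E_q$. Everything else is bookkeeping along a single orbit.
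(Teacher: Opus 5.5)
Your proof is correct and follows the paper's own argument. Both deduce nonterminality of the $q$-contact from $y\in E_q$ (you make this explicit via \Cref{prop:chambers}) and apply the converse direction of \Cref{thm:factorisation} for well-definedness. Both get continuity from projection to the source point and associativity from the truncation law; your check that both bracketings give $x$ in the same summand $\mathcal C_{sp}$ spells out the paper's remark that intermediate contacts can be deleted in either order.
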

	
	\begin{proof}
		Suppose that $x\in\mathcal C_{qp}$ and
		$y=\chi_{qp}(x)\in\mathcal C_{rq}$. Since $y\in E_q$, the $q$-contact of the $p$-transit is nonterminal. The converse direction of
		\Cref{thm:factorisation} therefore gives $x\in\mathcal C_{rp}$ and
		$\chi_{rp}(x)=\chi_{rq}(y)$, whence composition is well defined. On each space of composable pairs, composition is projection to the original source point $x$, and is therefore continuous. Associativity is the statement that deleting two intermediate contacts from one ordered contact list is independent of the order in which they are deleted; this is the truncation law of \Cref{thm:factorisation}.
	\end{proof}
	
	Every nonidentity contact strictly lowers the flow order, and hence $\cL$ has no identity morphisms. Its nerve is therefore naturally a \emph{semi-simplicial} space $N_\bullet\cL$, as described in \cite{EbertRandalWilliams2019} for instance. In particular, the space of $0$-simplices is
	$N_0\cL=\Ob(\cL)$, while $N_k\cL$ for $k\geq1$ corresponds to the space of $k$-tuples of composable morphisms. We write $B\cL:=|N_\bullet\cL|$ for the geometric realisation. For each chain $\mathbf p=(p_0>_F p_1>_F \cdots >_F p_k)$
	in $P$, let $N_k\cL(\mathbf p)$ denote the corresponding union of components of $N_k\cL$.
	
	\begin{proposition}\label{prop:segal}
		There is a canonical homeomorphism
		\[
		N_k\cL(\mathbf p)
		\cong
		\bigcap_{i=1}^k\mathcal C_{p_ip_0}
		\]
		for every chain $\mathbf p=(p_0 >_F p_1 >_F \cdots>_F p_k)$ with $k \ge 1$. Under this identification, the face map $d_i$ for $i>0$ forgets the $p_i$-contact, while $d_0(x)=\chi_{p_1p_0}(x)$.
	\end{proposition}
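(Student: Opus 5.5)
The plan is to write down the map in both directions explicitly and check that each is well defined and continuous, using the truncation law of \Cref{thm:factorisation} at every step. By definition, a point of $N_k\cL(\mathbf p)$ is a composable tuple $(x_1,\dots,x_k)$ with $x_i\in\mathcal C_{p_ip_{i-1}}$ and $\chi_{p_ip_{i-1}}(x_i)=x_{i+1}$ (source of $x_{i+1}$ equals target of $x_i$). It carries the subspace topology from $\prod_i\mathcal C_{p_ip_{i-1}}$.

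\emph{Forward map.} Send $(x_1,\dots,x_k)\mapsto x_1$. To see that $x_1$ lies in $\bigcap_{i}\mathcal C_{p_ip_0}$, induct on $i$ using composition. Composing $x_1$ with $x_2$ (as in \Cref{prop:category}) gives $x_1\in\mathcal C_{p_2p_0}$ with $\chi_{p_2p_0}(x_1)=x_3$. Iterating shows $x_1\in\mathcal C_{p_ip_0}$ and $\chi_{p_ip_0}(x_1)=x_{i+1}$ for all $i$. The map is a coordinate projection, so it is continuous.

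\emph{Inverse map.} Given $x\in\bigcap_{i=1}^k\mathcal C_{p_ip_0}$, the chain satisfies $c_{p_0}>c_{p_1}>\cdots>c_{p_k}$. By \Cref{cor:contact-order}, the contact times then satisfy $\tau_{p_1p_0}(x)<\tau_{p_2p_0}(x)<\cdots<\tau_{p_kp_0}(x)$, and every contact except possibly the last is nonterminal. Set $x_1=x$ and $x_{i+1}=\chi_{p_ip_0}(x)$. I then need $x_{i+1}\in\mathcal C_{p_{i+1}p_i}$ with $\chi_{p_{i+1}p_i}(x_{i+1})=x_{i+2}$. This follows from \Cref{thm:factorisation}(ii),(iii), applied to the nonterminal $p_i$-contact with $r=p_i$, $q=p_{i+1}$, together with the time inequality. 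The resulting tuple is continuous in $x$ because each $\chi_{p_ip_0}$ is continuous (\Cref{lem:contact-branches}). The two maps are mutually inverse by the identities $\chi_{p_ip_0}(x_1)=x_{i+1}$ established above. Since the domain is compact and the target Hausdorff, continuity of the forward bijection alone would already suffice.

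\emph{Face maps.} For $0<i<k$, $d_i$ composes $x_i$ with $x_{i+1}$. This deletes $x_{i+1}$ and keeps $x_1$, so under the identification it is the inclusion $\bigcap_j\mathcal C_{p_jp_0}\hookrightarrow\bigcap_{j\neq i}\mathcal C_{p_jp_0}$, i.e.\ it forgets the $p_i$-contact. For $i=k$, $d_k$ drops $x_k$, which again keeps $x_1$ and forgets the $p_k$-contact. The map $d_0$ drops $x_1$ and returns $(x_2,\dots,x_k)$, which corresponds to $x_2=\chi_{p_1p_0}(x)$ in $\bigcap_{i\ge2}\mathcal C_{p_ip_1}$.

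\emph{Main obstacle.} The delicate step is showing that the inverse lands in the nerve. Each intermediate contact must be nonterminal, so that it lies in the exit set $E_{p_i}$ and the contacts occur in the chain's order; this is exactly where \Cref{cor:contact-order} and the strict inequalities of critical values along the chain are needed. The case $k=1$ is tautological.
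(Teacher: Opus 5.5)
Your proposal is correct and follows essentially the paper's argument. Like the paper, you use \Cref{cor:contact-order} to order the contact times, so that every intermediate contact is nonterminal, and \Cref{thm:factorisation} in both directions to pass between a point of $\bigcap_i\mathcal C_{p_ip_0}$ and a composable string. The paper writes out the case $k=2$ via $x\mapsto(\chi_{p_1p_0}(x),x)$, concludes with the compact-to-Hausdorff argument, and then iterates, whereas you give continuous mutually inverse maps for general $k$ directly; this is only a cosmetic difference.
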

	
	\begin{proof}
		For $k=1$ there is nothing to prove. For $k=2$, the map
		\[
		\mathcal C_{p_1p_0}\cap\mathcal C_{p_2p_0}
		\longrightarrow
		\mathcal C_{p_2p_1}\times_{\Ob(\cL)}\mathcal C_{p_1p_0},
		\qquad
		x\longmapsto(\chi_{p_1p_0}(x),x),
		\]
		is well defined by
		\Cref{cor:contact-order,thm:factorisation}: the $p_1$-contact occurs before the $p_2$-contact and is therefore nonterminal. Conversely, \Cref{thm:factorisation} shows that every composable pair arises in this way. The map is injective by its second coordinate, and hence is a homeomorphism because its source is compact and its target Hausdorff. Iterating the same truncation argument gives the result for arbitrary $k$. The description of the face maps follows immediately: composing two consecutive morphisms deletes the corresponding intermediate contact, while $d_0$ truncates the transit at its first listed contact.
	\end{proof}
	
	There is also a natural structure map from the landfall category to the flow-order poset. On geometric realisations it gives
	\[
	\Gamma:B\cL \to \Kord(P),
	\]
	where $\Kord(P)$ is the order complex of $P$: on the nerve component indexed by $p_0 >_F \cdots >_F p_k$, the map $\Gamma$ simply retains the barycentric coordinates and forgets the contact point.
	
	\subsection{Transit Spaces and Evaluation}
	
	For each $p\in P$, we define the \textbf{transit space}
	\[
	\widetilde\Theta_p=\set{(x,t)\mid x\in E_p \text{ and }0\leq t\leq\tau_p(x)},
	\]
	and let $\ev_p:\widetilde\Theta_p\to X$ be the map $(x,t)\mapsto\phi_t(x)$. Let $\Theta_p$ be the image $\ev_p(\widetilde\Theta_p)$.
	
	\begin{lemma}\label{lem:transit-space}
		Both $\widetilde\Theta_p$ and $\Theta_p$ are compact subanalytic sets, the map $\ev_p:\widetilde\Theta_p\to\Theta_p$ is a homeomorphism, and $\Theta_p\cap N_p=E_p$.
	\end{lemma}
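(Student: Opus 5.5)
The plan is to establish compactness and subanalyticity of $\widetilde\Theta_p$ first, then deduce the statements about $\Theta_p$ and $\ev_p$. If $E_p=\varnothing$ everything is empty, so assume $E_p\neq\varnothing$. The set $\widetilde\Theta_p$ is the closed hypograph of $\tau_p$ over the compact set $E_p$, restricted to $t\ge 0$. By \Cref{lem:landfall}\ref{landfall:usc} the function $\tau_p$ is upper semicontinuous, and by \Cref{prop:uniform-transit} it is bounded by $T_G$. Hence $\widetilde\Theta_p$ is a closed subset of $E_p\times[0,T_G]$, and therefore compact.

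For subanalyticity, I will use \Cref{lem:landfall-subanalytic}: the graph of $\tau_p$ is subanalytic, so the hypograph is a definable set in $\R_{\mathrm{an}}$, cut out by the first-order condition $0\le t\le\tau_p(x)$. Since $\ev_p$ is the restriction of $\phi$ to $[0,T_G]\times X$, assertion (F2) of \Cref{prop:analytic-finiteness} makes it a subanalytic map with compact domain. Its image $\Theta_p$ is then compact, as a continuous image, and subanalytic, as the projection of a bounded definable set. This is the same closure property the paper has already used.

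For injectivity, suppose $\phi_t(x)=\phi_s(x')$ with $(x,t),(x',s)\in\widetilde\Theta_p$ and $t\le s$. Then $x'$ and $\phi_{t-s}(x)$ lie on a common orbit, and $x'=\phi_{t-s}(x)$. Both $x$ and $x'$ lie in $E_p\subset\{f=c_p-\eps\}$. Points of $E_p$ are noncritical, since $S_p$ lies in the interior of $N_p$ and $E_p$ is disjoint from every other block. So \eqref{eq:descent} says $f$ is strictly monotone along this orbit, which forces $t=s$ and $x=x'$. A continuous bijection from a compact space onto a Hausdorff space is a homeomorphism, so $\ev_p$ is a homeomorphism onto $\Theta_p$.

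Finally, $\Theta_p\cap N_p\supseteq E_p$ by taking $t=0$. For the reverse inclusion, if $t>0$ then \Cref{lem:landfall}\ref{landfall:no-ret} gives $\phi_t(x)\notin N_p$. So the only points of $\Theta_p$ in $N_p$ come from $t=0$, and these are exactly $E_p$.

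The main point needing care is subanalyticity of the hypograph. One must check that an upper semicontinuous subanalytic function has a subanalytic and closed hypograph, rather than only a subanalytic graph. I expect this to follow directly from definability, since the hypograph is defined by an inequality involving the definable function $\tau_p$. The rest is routine.
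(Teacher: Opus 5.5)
Your proof is correct and follows the paper's argument essentially step for step. The shared steps are: $\widetilde\Theta_p$ is compact and subanalytic as the bounded closed hypograph of the upper semicontinuous subanalytic function $\tau_p$; $\Theta_p$ is subanalytic by bounded-time subanalyticity of the flow; $\ev_p$ is injective by strict descent of $f$ between two points on the common level $\{f=c_p-\epsilon\}$; it is a homeomorphism by the compact-to-Hausdorff argument; and $\Theta_p\cap N_p=E_p$ because forward orbits from $E_p$ never return to $N_p$. For that last step the paper uses the $f$-value bound directly, whereas you cite \Cref{lem:landfall}\ref{landfall:no-ret}, which rests on the same bound. One small tidy-up: points of $E_p$ are noncritical most simply because $f\equiv c_p$ on $S_p$ while $f\equiv c_p-\epsilon$ on $E_p$. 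The fact $S_p\subset N_p^\circ$ alone does not separate $S_p$ from $E_p$ unless you first know $E_p\subset\partial N_p$.
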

	
	\begin{proof}
		The hypograph of the bounded upper-semicontinuous subanalytic function $\tau_p$ is compact and subanalytic by
		\Cref{lem:landfall,prop:uniform-transit,lem:landfall-subanalytic}. Hence $\widetilde\Theta_p$ is compact subanalytic.
		If $\phi_t(x)=\phi_{t'}(x')$ for $x,x'\in E_p$, then $x$ and $x'$ lie on the same nonconstant orbit and share their $f$-value $c_p-\epsilon$, whence $x=x'$. Uniqueness of the flow then gives $t=t'$. Thus $\ev_p$ is injective. Its image is compact subanalytic because the flow is subanalytic on the bounded time interval $[0,T_G]$. Continuous bijections from compact spaces to Hausdorff spaces are homeomorphisms onto their images. Finally, the combination of $f(\phi_t(x))<c_p-\epsilon$ for every $x\in E_p$ and $t>0$ with $N_p\subset\{f\geq c_p-\epsilon\}$ forces $\Theta_p\cap N_p=E_p$.
	\end{proof}
	
	Define $\widehat N_p:=N_p\cup\Theta_p$ for each $p\in P$. We recall that $A\subset P$ is a {\bf down-set} if $q<_F p$ and $p\in A$ force $q\in A$. For each such $A$, define
	\[
	X_A:=\bigcup_{p\in A}\widehat N_p.
	\]
	
	\begin{proposition}\label{prop:transit-lattice}
		For every down-set $A\subset P$:
		\begin{enumerate}[label=\textup{(\roman*)}]
			\item $X_A$ is compact, subanalytic, positively invariant, and satisfies
			\[
			X_A=\phi_{[0,\infty)}\left(\bigcup_{p\in A}N_p\right).
			\]
			In particular, $X_P=X$. We call $X_A$ the {\bf forward saturation} of $\bigcup_{p\in A}N_p$.
			
			\item If $p\ne q$ and $\widehat N_p\cap\widehat N_q\ne\varnothing$, then $p$ and $q$ are comparable in the flow order.
			
			\item For any other down-set $B\subset P$, we have
			\[
			X_{A\cup B}=X_A\cup X_B,
			\qquad
			X_{A\cap B}=X_A\cap X_B.
			\]
		\end{enumerate}
	\end{proposition}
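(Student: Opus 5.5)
We prove (i) first and derive (ii) and (iii) from the orbit description of $X_A$.

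\emph{Part (i).} Compactness and subanalyticity are immediate. $X_A$ is a finite union of the compact subanalytic sets $N_p$ and $\Theta_p$ (\Cref{lem:transit-space}).

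For the inclusion $X_A\subset\phi_{[0,\infty)}(\bigcup_{p\in A}N_p)$, note that $\Theta_p=\phi$ applied to points of $E_p\subset N_p$ at nonnegative times.

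For the reverse inclusion, take $z\in N_{p_0}$ with $p_0\in A$ and $t\ge0$, and follow the forward orbit of $z$:
\begin{enumerate}[label=\textup{(\alph*)}]
\item Either it stays in $N_{p_0}$ for all time, or it leaves $N_{p_0}$. If it leaves, it first does so through a point $x_0\in E_{p_0}$ by \Cref{lem:block-properties}(ii) and the index pair axioms, and \Cref{lem:landfall}\ref{landfall:no-ret} prevents any return.
\item From $x_0$ the orbit traverses $\Theta_{p_0}$ until landfall at a point of $\partial N_{p_1}$, where $p_1=\dest_{p_0}(x_0)$.
\item Since $E_{p_1 p_0}\subset\mathcal C_{p_1p_0}\neq\varnothing$, \Cref{cor:support-sandwich} gives $p_1<_F p_0$. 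Because $A$ is a down-set, $p_1\in A$.
\item Iterate from the landfall point in $N_{p_1}$.
\end{enumerate}
Critical values strictly decrease along $p_0,p_1,\dots$, so the iteration terminates after finitely many steps. Each point $\phi_t(z)$ therefore lies in some $N_{p_i}$ or some $\Theta_{p_i}$ with $p_i\in A$.

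The same description shows that $X_A$ is positively invariant, since it is a forward image.

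For $X_P=X$, take any $x\in X$. Its backward limit $\ell_-(x)$ lies in some $S_p\subset N_p^\circ$ by \Cref{prop:capture}, so $\phi_{-s}(x)\in N_p$ for some $s\ge0$. Hence $x$ lies in the forward saturation of $N_p$.

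\emph{Part (ii).} Suppose $z\in\widehat N_p\cap\widehat N_q$. Blocks are disjoint, so at least one of the memberships comes from a transit space; say $z\in\Theta_p$, so $z=\phi_t(x)$ with $x\in E_p$ and $0<t\le\tau_p(x)$.
\begin{enumerate}[label=\textup{(\alph*)}]
\item If $z\in N_q$, then $z\in\partial N_q$, because the landfall transit lies in $G$ by \Cref{prop:uniform-transit}. Thus $x\in\mathcal C_{qp}$, and $q<_F p$ by \Cref{thm:contact-support}.
\item If $z\in\Theta_q$ too, write $z=\phi_{t'}(x')$ with $x'\in E_q$. Both orbits pass through $z$, so they coincide. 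Without loss of generality $c_q<c_p$, since equal values would force $x=x'$ and hence $p=q$.
\item The orbit from $x'$ is the tail of the orbit from $x$. Going backwards from $z$ along the $p$-transit, the orbit must meet $x'\in E_q\subset N_q$ at a time in $(0,t]$. This is again a contact of the $p$-transit with $N_q$, lying in $\partial N_q$ by \Cref{prop:uniform-transit}. Hence $x\in\mathcal C_{qp}$ and $q<_F p$.
\end{enumerate}

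\emph{Part (iii).} The union formula is immediate from the definition of $X_A$ as a union over $A$. The inclusion $X_{A\cap B}\subset X_A\cap X_B$ is also clear.

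For the converse, take $z\in\widehat N_p\cap\widehat N_q$ with $p\in A$ and $q\in B$. If $p=q$ we are done. Otherwise part (ii) makes $p,q$ comparable, say $q<_F p$. Then $q\in A$ because $A$ is a down-set, so $q\in A\cap B$ and $z\in\widehat N_q\subset X_{A\cap B}$.

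The main obstacle I expect is the bookkeeping in (ii) when $z$ lies in two transit spaces. One must check that the later-starting orbit's exit point is genuinely a pre-landfall contact, and not beyond $\tau_p(x)$, of the earlier transit. I would handle this with \Cref{prop:uniform-transit}: the $p$-transit stays in $G$ up to $\tau_p(x)\ge t$, and a point of $E_q$ encountered before $z$ is a boundary point of $N_q$ reached at a time at most $t$.
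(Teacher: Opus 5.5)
Your proposal is correct and follows the paper's proof essentially step for step. You use the same exit-and-landfall iteration for (i), with the order drop supplied by contact support; the same case split in (ii), based on transits avoiding block interiors and equal critical values forcing equal exit points; and the same down-set argument for (iii). Your normalisation $t>0$ in (ii) silently uses block disjointness, since $t=0$ puts $z$ in $N_p$ and reduces to the symmetric case, but this is harmless.
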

	
	\begin{proof}
		Compactness and subanalyticity follow from \Cref{lem:transit-space} and finiteness of the unions. Starting from $N_p$, a forward orbit either remains in that block for all future time or leaves through $E_p$, follows its $p$-transit, and lands in a block $N_q$ for which $\mathcal C_{qp}\ne\varnothing$. Hence $q <_F p$ by \Cref{thm:contact-support}. Iterating this procedure strictly lowers the finite flow order, and therefore remains inside $A$ whenever $p\in A$. Thus
		\[
		\phi_{[0,\infty)}\left(\bigcup_{p\in A}N_p\right)\subset X_A,
		\]
		and hence $X_A$ is positively invariant. The reverse inclusion follows directly from the definition of transit spaces. Conversely, every point of $X$ has a backward limit in some $S_p\subset N_p^\circ$ by \Cref{prop:capture}, and therefore lies in the forward saturation of $N_p$. Hence $X_P=X$.
		
		For (ii), recall that the blocks $\set{N_p}$ are pairwise disjoint by \Cref{def:block-system}. Since no transit may enter a block interior before terminal landfall, we must have $N_p\cap\Theta_q \subset \partial N_p$. Write any $z\in N_p\cap\Theta_q$ as $z=\phi_t(y)$ with $y\in E_q$ and $0\leq t\leq\tau_q(y)$. This is a $p$-contact of the $q$-transit, so $y\in\mathcal C_{pq}$ and $z=\chi_{pq}(y)$, forcing $p <_F q$. The case $z\in\Theta_p\cap N_q$ is symmetric. Finally, suppose $z\in\Theta_p\cap\Theta_q$, so
		$z=\phi_t(x)=\phi_s(y)$ for unique $x\in E_p$ and $y\in E_q$. If $c_p=c_q$, then $x$ and $y$ lie on the same orbit and on the same noncritical $f$-level, forcing $x=y$ and contradicting disjointness of the blocks. Otherwise, assume $c_p>c_q$. Then $x$ occurs before $y$ on their common orbit. Since $z$ occurs before or at the terminal time of the $p$-transit, that transit contacts $N_q$ at $y$. Hence $\mathcal C_{qp}\ne\varnothing$, so $q<_F p$.
		
		The union identity in (iii) is immediate. If $z\in X_A\cap X_B$, select $p\in A$ and $q\in B$ such that $z\in\widehat N_p\cap\widehat N_q$. If $p=q$, then $p\in A\cap B$. Otherwise, (ii) forces $p$ and $q$ to be comparable; assume $q<_F p$. By downward closure, $q\in A$, whence $z\in\widehat N_q\subset X_{A\cap B}$.
	\end{proof}
	
	Consider a descending chain $\mathbf p=(p_0>_F p_1>_F\cdots>_F p_k)$ in $P$ and a point
	$x\in\bigcap_{i=1}^k\mathcal C_{p_ip_0}$. Let $t^x\in\R^{k+1}$ be the vector with $t_0^x=0$ and $t_i^x=\tau_{p_ip_0}(x)$ for $1\leq i\leq k$. For $\lambda=(\lambda_0,\ldots,\lambda_k)\in\simp^k$, define
	\begin{equation}\label{eq:evaluation}
		\Ev(x;\lambda)=\phi_{\ip{\lambda,t^x}}(x).
	\end{equation}
	This map extends to $N_0\cL=\coprod_pN_p$ via $\Ev(z)=z$.
	
	\begin{lemma}\label{lem:evaluation-well-defined}
		Evaluation induces a continuous map $\Ev:B\cL\to X$.
	\end{lemma}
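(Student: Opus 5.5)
The plan is to apply the universal property of the geometric realisation of a semi-simplicial space. Recall that $B\cL$ is the quotient of $\coprod_{k\ge0} N_k\cL\times\simp^k$ by the relations $(d_i z;\lambda)\sim(z;\delta^i\lambda)$, where $\delta^i:\simp^{k-1}\to\simp^k$ inserts a zero in the $i$-th coordinate. It therefore suffices to do two things. First, show that the formula \eqref{eq:evaluation} gives a continuous map on each $N_k\cL(\mathbf p)\times\simp^k$. Second, check that these maps respect every face identification. Since $P$ is finite and each $N_k\cL(\mathbf p)$ is open and closed in $N_k\cL$, continuity can be checked chain by chain.

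For continuity, use \Cref{prop:segal} to identify $N_k\cL(\mathbf p)$ with the compact set $\bigcap_{i\ge1}\mathcal C_{p_ip_0}$. On this set each $\tau_{p_ip_0}$ is continuous by \Cref{lem:contact-branches}(ii), so $(x,\lambda)\mapsto\ip{\lambda,t^x}$ is continuous. Composing with the continuous flow $\phi:\R\times X\to X$ then gives continuity of $\Ev$ on that piece. On $N_0\cL$ the map is the inclusion $\coprod_p N_p\to X$, which is continuous.

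For the face relations, let $i>0$. By \Cref{prop:segal}, $d_i$ forgets the $p_i$-contact while keeping the base point $x$. Inserting $\lambda_i=0$ removes the term $\lambda_i t_i^x$ from $\ip{\lambda,t^x}$, and the remaining times $\tau_{p_jp_0}(x)$ are unchanged. So both sides of the relation give $\phi$ at the same time applied to the same $x$. For $k=1$, $i=1$ this reads $\Ev(x;1,0)=\phi_0(x)=x=\Ev(d_1x)$, consistent with $N_0$.

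The case $i=0$ is the main point. Here $d_0x=y:=\chi_{p_1p_0}(x)$, regarded as a point of $\bigcap_{i\ge2}\mathcal C_{p_ip_1}$. By \Cref{cor:contact-order} the $p_1$-contact is the earliest, and it is nonterminal when $k\ge2$. The truncation law \eqref{eq:contact-factorisation} of \Cref{thm:factorisation}(iii) then gives $\tau_{p_ip_1}(y)=\tau_{p_ip_0}(x)-\tau_{p_1p_0}(x)$. Take $\lambda'=(\lambda_1,\dots,\lambda_k)\in\simp^{k-1}$ and set $\lambda=(0,\lambda')$. Then
\[
\Ev(y;\lambda')=\phi_{\sum_{i\ge1}\lambda_i(t_i^x-t_1^x)}\bigl(\phi_{t_1^x}(x)\bigr)=\phi_{\ip{\lambda,t^x}}(x),
\]
because $\sum_{i\ge1}\lambda_i=1$ cancels the $t_1^x$ shift. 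When $k=1$ the relation is $\Ev(x;0,1)=\phi_{\tau_{p_1p_0}(x)}(x)=\chi_{p_1p_0}(x)=\Ev(d_0x)$, which holds directly.

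The only delicate point is the $i=0$ identity. It rests on the fact that the reindexing under $d_0$ matches the truncation law exactly, so I would state that case carefully. Once the face relations are established, the maps descend to a continuous $\Ev:B\cL\to X$ by the quotient topology, which proves \Cref{lem:evaluation-well-defined}.
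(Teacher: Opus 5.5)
Your proposal is correct and follows essentially the same route as the paper: you check continuity on each nerve component, then verify the face identifications. The $d_i$ faces with $i>0$ are immediate, and the $d_0$ face follows from the truncation law $t^y_{i-1}=t^x_i-t^x_1$ together with $\sum_{i\ge1}\lambda_i=1$; your explicit handling of the $k=1$ cases and of continuity via \Cref{lem:contact-branches} just fills in details the paper treats as evident.
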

	
	\begin{proof}
		On each component of $N_k\cL\times\simp^k$, the formula is evidently continuous, so it suffices to check compatibility with the face identifications in the geometric realisation. For $i>0$, the face map $d_i$ merely deletes the $p_i$-contact by \Cref{prop:segal}; on the corresponding simplex face $\lambda_i=0$, the evaluation formula is therefore unchanged. For $d_0$, set $y=\chi_{p_1p_0}(x)=\phi_{t_1^x}(x)$, which lies in the nerve summand indexed by $(p_1>_F\cdots>_F p_k)$. By \Cref{thm:factorisation}, one has $t_{i-1}^y=t_i^x-t_1^x$ for $1\leq i\leq k$. On the face $\lambda_0=0$,
		\[
		\phi_{\sum_{i=1}^k\lambda_it_i^x}(x)
		=
		\phi_{\sum_{i=2}^k\lambda_i(t_i^x-t_1^x)}(y),
		\]
		because $\sum_{i=1}^k\lambda_i=1$. Thus evaluation respects every face identification and descends continuously to $B\cL$.
	\end{proof}
	
	\subsection{Triangulations and Flow-Prisms}
	
	We now select finite triangulations of the blocks $N_p$ which are compatible with the morphism spaces of $\cL$.
	
	\begin{lemma}\label{lem:triangulations}
		There exist finite subanalytic triangulations $\cK_p$ of the blocks $N_p$ such that:
		\begin{enumerate}[label=\textup{(\roman*)}]
			\item $E_p$ and every contact domain $\mathcal C_{qp}$ are subcomplexes of $\cK_p$;
			\item every $E_{qp}$ is a union of simplex interiors, and $\overline E_{qp}$ is a subcomplex;
			\item the image $\chi_{qp}(\sigma)$ of every simplex $\sigma\subset\mathcal C_{qp}$ is a subcomplex of $\cK_q$.
		\end{enumerate}
	\end{lemma}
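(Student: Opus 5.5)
The plan is to construct all the triangulations $\cK_p$ simultaneously, by downward induction along the order $\prec_c$. The tool is the standard subanalytic triangulation theorem: a compact subanalytic set together with finitely many subanalytic subsets admits a finite subanalytic triangulation in which each subset is a union of open simplices, and the triangulation can be refined to be compatible with further finitely many subanalytic sets. The difficulty is condition (iii). It couples the triangulation of $N_p$ to that of each lower $N_q$, so triangulating each block separately will not work.

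Condition (iii) pushes simplices downward. The natural order is therefore to triangulate the blocks with the \emph{smallest} critical values last, so that each can absorb all the images coming from above. Conversely, the decomposition of $N_p$ must be pulled back through the maps $\chi_{qp}$. I will handle both directions by first fixing a finite family of subanalytic subsets of each block, closed under the relevant pushforwards and pullbacks, and only then triangulating.

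\textbf{Step 1 (saturated stratification).} For each $p$, start from the finite family consisting of $E_p$, the sets $\mathcal C_{qp}$ and $E_{qp}$ with their closures, and $\partial N_p$; all are subanalytic by \Cref{lem:contact-branches} and \Cref{prop:chambers}.

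Then close these families under two operations. Images $\chi_{qp}(A)\subset\partial N_q$ are subanalytic because $\chi_{qp}$ is a subanalytic embedding. Preimages $\chi_{qp}^{-1}(B)$ are subanalytic for the same reason.

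This closure is finite. Pushing a set from $N_p$ into $N_q$ and then pulling back along some $\chi_{qp'}$ lands in a block $N_{p'}$ with $c_{p'}>c_q$. By the truncation law of \Cref{thm:factorisation}, every such composite factors through the original contact maps along a single orbit, so only finitely many chains occur and each is a descending chain in the finite poset $P$.

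\textbf{Step 2 (triangulate top-down, refine bottom-up).} Process the blocks in order of decreasing $c_p$. When $\cK_p$ is chosen, it is made compatible with the saturated family, so each $\chi_{qp}(\sigma)$ is a subanalytic subset of $N_q$, built from sets in the saturated family. Then choose $\cK_q$ compatible with all these images and with the family for $q$. The images of closed simplices are closed, so they become subcomplexes.

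\textbf{Main obstacle.} Refining $\cK_q$ does not disturb $\cK_p$. The real danger runs the other way: if $\cK_q$ must later be refined for the sake of a lower block $N_r$, it may need to pull that refinement back to $N_p$. Even then, the image of a single simplex of $\cK_p$ is a union of open simplices of $\cK_q$ but need not be one simplex.

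The fix is to use the closure property of Step 1: every set that will ever need to be respected is fixed before any triangulation is chosen. Once the saturated family is fixed, I would then invoke a relative triangulation theorem for a subanalytic map (e.g.\ Shiota's triangulation of definable maps, or the o-minimal triangulation of a homeomorphism onto its image). This makes each embedding $\chi_{qp}|_{\mathcal C_{qp}}$ simplicial onto its image subcomplex, after subdividing $\cK_p$ by pulling back. Checking that this subdivision can be carried out consistently across all $q$ at once is the delicate point. I would handle it by processing the pairs $(q,p)$ in lexicographic order and noting that subdividing $\cK_p$ on $\mathcal C_{qp}$ preserves every previously imposed compatibility, since subdivisions of subcomplexes remain subcomplexes.

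Conditions (i) and (ii) then follow immediately from compatibility with the saturated family.
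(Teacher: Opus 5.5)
Your Step 2 is the paper's proof, and it is already complete on its own. Process the blocks in order of decreasing $c_p$; the paper uses a linear extension of $<_F$, and either works, since $\mathcal C_{qp}\neq\varnothing$ forces $c_q<c_p$. When $\cK_q$ is chosen, make it compatible with the following family: $E_q$, the contact domains and chamber closures in $N_q$, the chambers $E_{rq}$, and the compact subanalytic sets $\chi_{qp}(\sigma)$ for all simplices $\sigma\subset\mathcal C_{qp}$ of the already-fixed $\cK_p$. Compact members of the family become subcomplexes, and the locally closed chambers become unions of open simplices. This gives (i)--(iii).

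The ``main obstacle'' you describe does not exist. Condition (iii) for a pair $(r,q)$ constrains only the target $\cK_r$. So once $\cK_q$ is fixed, it never has to be refined on behalf of a lower block, and nothing ever needs to be pulled back. Moreover, (iii) asks only that $\chi_{qp}(\sigma)$ be a subcomplex. It does not require a single simplex, nor that $\chi_{qp}$ be simplicial.

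The extra machinery you add to overcome this obstacle has genuine gaps, so you should drop it rather than rely on it.
\begin{enumerate}[label=\textup{(\alph*)}]
\item The finiteness claim in Step 1 is not justified as stated. Alternating push--pull sequences $N_p\to N_q\leftarrow N_{p'}\to N_{q'}\leftarrow\cdots$ are not descending chains in $P$. The correct reason would be that every such composite between exit sets is a partial transport along orbits, because each orbit meets a regular level of $f$ at most once.
\item More seriously, your lexicographic subdivision scheme does not close up. Subdividing $\cK_p$ along $\mathcal C_{qp}$ to make $\chi_{qp}$ simplicial creates new simplices $\sigma'$. Their images $\chi_{q'p}(\sigma')$, for pairs $(q',p)$ handled earlier, need not be subcomplexes of $\cK_{q'}$. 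Repairing this refines $\cK_{q'}$, which can in turn destroy the simpliciality already arranged for maps into or out of $N_{q'}$.
\end{enumerate}
Your remark that subdivisions of subcomplexes remain subcomplexes does not address (b). The problem is the newly created simplices, not the old subcomplexes.
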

	
	\begin{proof}
		All sets involved are subanalytic by
		\Cref{lem:contact-branches,prop:chambers}; the contact domains and chamber closures are compact, while the chambers themselves are locally closed. Fix a linear extension of the flow order and construct the triangulations from higher to lower indices. When $\cK_q$ is chosen, all $\cK_p$ with $p>_F q$ have already been fixed. Apply the subanalytic triangulation theorem \cite{Shiota1988Triangulation} to a finite family containing $E_q$, all contact domains and chamber closures inside $N_q$, all chambers $E_{rq}$, and all compact subanalytic sets $\chi_{qp}(\sigma)$, where $p>_F q$ and $\sigma$ ranges over the simplices of $\cK_p$ contained in $\mathcal C_{qp}$. Compatibility with the locally closed chambers makes them unions of simplex interiors, while compatibility with the compact members of the family makes them subcomplexes.
	\end{proof}
	
	A collection $\set{\cK_p\mid p\in P}$ satisfying the preceding lemma is called a {\bf flow-subordinate triangulation}. Fix such a collection once and for all. For every simplex $\sigma\subset E_p$, define
	\[
	R(\sigma):=\set{q<_F p\mid \sigma\subset\mathcal C_{qp}};
	\]
	we adopt the convention that $R(\sigma)=\varnothing$ for simplices $\sigma\not\subset E_p$.
	
	\begin{lemma}\label{lem:simplex-contacts}
		For every simplex $\sigma\subset E_p$:
		\begin{enumerate}[label=\textup{(\roman*)}]
			\item $R(\sigma)$ is nonempty and totally ordered;
			\item the minimum element of $R(\sigma)$ is the unique $q$ satisfying $\sigma^\circ\subset E_{qp}$;
			\item if $\rho\leq\sigma$ is a face, then $R(\sigma)\subset R(\rho)$.
		\end{enumerate}
	\end{lemma}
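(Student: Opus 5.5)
We handle the three assertions in the order (iii), (ii), (i); (i) uses (ii).

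\emph{Assertion (iii).} This is immediate from the definition. If $\rho\le\sigma$ is a face, then $\rho\subset E_p$ because $E_p$ is a subcomplex. Any $q$ with $\sigma\subset\mathcal C_{qp}$ then satisfies $\rho\subset\sigma\subset\mathcal C_{qp}$.

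\emph{Assertion (ii).} The key step is to show that $\sigma^\circ$ lies in a single chamber.
\begin{itemize}
\item By \Cref{prop:chambers}, the chambers $E_{qp}$ partition $E_p$.
\item By \Cref{lem:triangulations}(ii), each chamber is a union of simplex interiors. Hence $\sigma^\circ\subset E_{q_0p}$ for exactly one $q_0$.
\item Since $\overline E_{q_0p}$ is a subcomplex containing $\sigma^\circ$, it contains $\sigma$. By compactness of $\mathcal C_{q_0p}$ and $E_{q_0p}\subset\mathcal C_{q_0p}$, we get $\sigma\subset\mathcal C_{q_0p}$.
\item By \Cref{thm:contact-support}, $\mathcal C_{q_0p}\neq\varnothing$ gives $q_0<_F p$, so $q_0\in R(\sigma)$ and $R(\sigma)\neq\varnothing$.
\end{itemize}
It remains to show $q_0\le_F q$ for every $q\in R(\sigma)$ with $q\neq q_0$. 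Pick $x\in\sigma^\circ$. Then $x\in\mathcal C_{qp}$ and $x\in E_{q_0p}$, so $\tau_{q_0p}(x)=\tau_p(x)$. Contact times are unique by \Cref{lem:contact-branches}, and distinct contacts occur at distinct times, so $\tau_{qp}(x)<\tau_p(x)$ and the $q$-contact is nonterminal. \Cref{thm:factorisation}(ii) then gives $y=\chi_{qp}(x)\in\mathcal C_{q_0q}$. By \Cref{thm:contact-support}, $q_0<_F q$. Thus $q_0$ is the minimum of $R(\sigma)$ and is characterised as stated.

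\emph{Assertion (i), total order.} Take $q\neq r$ in $R(\sigma)$ and any $x\in\sigma$. Then $x$ contacts both $N_q$ and $N_r$ at distinct times, by uniqueness of contact times and disjointness of blocks. Say $\tau_{rp}(x)<\tau_{qp}(x)\le\tau_p(x)$. The $r$-contact is nonterminal, so \Cref{thm:factorisation}(ii) gives $\chi_{rp}(x)\in\mathcal C_{qr}$. By \Cref{thm:contact-support}, $q<_F r$. Hence any two elements of $R(\sigma)$ are comparable, and $R(\sigma)$ is a chain. This uses the comparison argument from (ii) pointwise at a single $x$, so no uniformity over $\sigma$ is needed.

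\emph{Expected obstacle.} The delicate point is in (ii): passing from $\sigma^\circ\subset E_{q_0p}$ to the whole closed simplex $\sigma\subset\mathcal C_{q_0p}$. This relies on closedness of contact domains, and on the triangulation making chambers unions of simplex interiors. Without that compatibility, $\sigma^\circ$ could straddle a landfall wall and $q_0$ would not be well defined.
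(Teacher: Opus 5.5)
Your proposal is correct and follows essentially the paper's argument: you place $\sigma^\circ$ in a single chamber using \Cref{lem:triangulations}(ii) and \Cref{prop:chambers}, and you pass to the closed simplex using that $\mathcal C_{q_0p}$ is closed, where the paper instead uses that it is a subcomplex. You then compare contact labels pointwise along the single transit through $x$ via \Cref{thm:factorisation} and \Cref{thm:contact-support}, as the paper does; the one step you leave implicit is that invoking \Cref{thm:factorisation}(ii) requires $q_0\prec_c q$ (respectively $q\prec_c r$), which \Cref{cor:contact-order} supplies, or alternatively \Cref{lem:block-properties}(iii) applied to the exit point $\chi_{qp}(x)\in E_q$.
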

	
	\begin{proof}
		By \Cref{lem:triangulations}(ii) and the chamber partition of \Cref{prop:chambers}, the relative interior $\sigma^\circ$ lies in a unique chamber $E_{qp}$. Since $E_{qp}\subset\mathcal C_{qp}$ and $\mathcal C_{qp}$ is a subcomplex of $\cK_p$, we have $\sigma\subset\mathcal C_{qp}$ and hence $q\in R(\sigma)$. Choose $x\in\sigma^\circ$. Every element of $R(\sigma)$ is a contact label of the single $p$-transit through $x$. Distinct contacts are strictly ordered by contact time, and \Cref{thm:factorisation,thm:contact-support} show that the later contact has the lower flow label. Therefore $R(\sigma)$ is totally ordered. Since the $q$-contact is terminal for $x$, it occurs after every other contact, and hence $q=\min R(\sigma)$. Finally, each $\mathcal C_{qp}$ is a subcomplex, so $\sigma\subset\mathcal C_{qp}$ implies $\rho\subset\mathcal C_{qp}$ for every face $\rho\leq\sigma$.
	\end{proof}
	
	Let $\dest(\sigma):=\min R(\sigma)$ for every simplex $\sigma\subset E_p$. Thus $\dest(\sigma)$ is the common landfall destination of the points in $\sigma^\circ$, and $\rho\leq\sigma$ implies $\dest(\rho)\leq_F\dest(\sigma)$.
	
	\begin{definition} \label{def:flow-prism}
		The \textbf{flow prism} of a simplex $\sigma\subset E_p$ with $\dest(\sigma)=q$ is
		\[
		\Pi_\sigma:=\set{\phi_t(x)\mid x\in\sigma,\ 0\leq t\leq\tau_{qp}(x)}.
		\]
	\end{definition}
	
	\begin{lemma}\label{lem:flow-prisms}
		For every simplex $\sigma\subset E_p$ with $\dest(\sigma)=q$, the map $\Psi_\sigma:\sigma\times[0,1]\to\Pi_\sigma$ given by
		$(x,s) \mapsto \phi_{s\tau_{qp}(x)}(x)$ is a subanalytic homeomorphism. Moreover,
		$\Theta_p=\bigcup_{\sigma\subset E_p}\Pi_\sigma$.
	\end{lemma}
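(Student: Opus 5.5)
The map $\Psi_\sigma$ factors as the composite of the reparametrisation $\sigma\times[0,1]\to\widetilde\Theta_p$, $(x,s)\mapsto(x,s\tau_{qp}(x))$, with the evaluation homeomorphism $\ev_p$ of \Cref{lem:transit-space}. I will therefore prove the statement for the reparametrisation and transport it along $\ev_p$.

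First I check that the reparametrisation is well defined. Every $x\in\sigma$ lies in $\mathcal C_{qp}$, because $q=\dest(\sigma)\in R(\sigma)$ by \Cref{lem:simplex-contacts}. Hence $\tau_{qp}(x)$ is defined, and it satisfies $\tau_{qp}(x)\le\tau_p(x)$ by the definition of contact. So $(x,s\tau_{qp}(x))\in\widetilde\Theta_p$. Continuity and subanalyticity follow from \Cref{lem:contact-branches}(ii), since $\tau_{qp}$ is continuous and subanalytic on the compact set $\mathcal C_{qp}\supset\sigma$.

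Next I check injectivity. By \Cref{lem:contact-branches}(i), $\tau_{qp}(x)\ge\eta_p>0$. So the map $s\mapsto s\tau_{qp}(x)$ is injective for fixed $x$, and distinct $x$ stay distinct in the first coordinate. Composing with the injective map $\ev_p$ gives a continuous bijection $\Psi_\sigma:\sigma\times[0,1]\to\Pi_\sigma$. The image is exactly $\Pi_\sigma$ by definition. The source $\sigma\times[0,1]$ is compact and $X$ is Hausdorff, so $\Psi_\sigma$ is a homeomorphism onto $\Pi_\sigma$. Its graph is subanalytic because it is obtained from the subanalytic graph of $\tau_{qp}$ and the bounded-time subanalytic flow (F2) by composition. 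So $\Psi_\sigma$, and hence its inverse, is subanalytic.

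For the covering statement, the inclusion $\bigcup_\sigma\Pi_\sigma\subset\Theta_p$ is immediate, since $\tau_{qp}\le\tau_p$ on $\sigma$. For the reverse inclusion, take $z=\phi_t(x)$ with $x\in E_p$ and $0\le t\le\tau_p(x)$. Since $\cK_p$ triangulates $E_p$ as a subcomplex, $x$ lies in the relative interior of a unique simplex $\sigma\subset E_p$. By \Cref{lem:simplex-contacts}(ii), $x\in\sigma^\circ\subset E_{qp}$ with $q=\dest(\sigma)$. By the chamber description in \Cref{prop:chambers}, this means $\tau_{qp}(x)=\tau_p(x)$. Hence $t\le\tau_{qp}(x)$ and $z\in\Pi_\sigma$.

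The only subtle point is that on $\sigma$ minus its interior, $\tau_{qp}$ may be strictly smaller than $\tau_p$. This does not threaten the equality of sets, because boundary points of $\sigma$ are covered by the prisms of their own carrier simplices. I expect no real obstacle beyond verifying carefully that $\tau_{qp}$ is defined and positive on all of the closed simplex $\sigma$, which is guaranteed because $\mathcal C_{qp}$ is a subcomplex containing $\sigma$.
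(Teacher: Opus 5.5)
Your proposal is correct and follows essentially the paper's own proof. You get continuity, subanalyticity and $\tau_{qp}\geq\eta_p>0$ from \Cref{lem:contact-branches}, then injectivity plus compactness for the homeomorphism, and the covering from the identification $\tau_p=\tau_{\dest(\sigma),p}$ on $\sigma^\circ$ via \Cref{lem:simplex-contacts}. The only difference is cosmetic: you obtain injectivity by factoring through the already-established homeomorphism $\ev_p$ of \Cref{lem:transit-space}, whereas the paper reruns the same strict-descent-on-the-exit-level argument directly.
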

	
	\begin{proof}
		Since $\sigma\subset\mathcal C_{qp}$, \Cref{lem:contact-branches} shows that $\tau_{qp}$ is continuous and subanalytic on $\sigma$, with $\tau_{qp}(x)\geq\eta_p>0$. Thus $\Psi_\sigma$ is continuous and subanalytic. If
		$\phi_{s\tau_{qp}(x)}(x)=\phi_{s'\tau_{qp}(x')}(x')$, then $x$ and $x'$ lie on the same nonconstant orbit and on the common level $\{f=c_p-\epsilon\}$. Strict descent gives $x=x'$, and uniqueness of the flow then gives $s\tau_{qp}(x)=s'\tau_{qp}(x)$. Since $\tau_{qp}(x)>0$, we have $s=s'$. Thus $\Psi_\sigma$ is injective and, by compactness, a homeomorphism onto its image. For the final assertion, let $\sigma\subset E_p$ be the unique simplex whose relative interior contains $x$. By \Cref{lem:simplex-contacts}, the destination of $x$ is $\dest(\sigma)$, so $\tau_p(x)=\tau_{\dest(\sigma),p}(x)$. Thus the entire landfall transit of $x$ lies in $\Pi_\sigma$. The reverse inclusion is immediate.
	\end{proof}
	
	Flow prisms allow us to choose a triangulation of $X$ compatible with the blocks and their transits.
	
	\begin{proposition}\label{prop:global-triangulation}
		There is a finite subanalytic triangulation of $X$ compatible with every block $N_p$ and every flow prism $\Pi_\sigma$. Consequently every $\Theta_p$, every $\widehat N_p$, and every $X_A$ is a subpolyhedron. In particular, if $A'\subset A$ are down-sets, then $X_{A'}\hookrightarrow X_A$ is a cofibration.
	\end{proposition}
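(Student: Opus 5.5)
The plan is to apply the subanalytic triangulation theorem \cite{Shiota1988Triangulation} once more, now to $X$ itself. It should be fed a finite family of compact subanalytic subsets of $X$. The key observation is that this family is finite and subanalytic, so no inductive construction is needed.

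First, list the family. It consists of the blocks $N_p$ for $p\in P$, together with the flow prisms $\Pi_\sigma$ as $\sigma$ ranges over the simplices of $\cK_p$ contained in $E_p$, for all $p$.

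\begin{itemize}
\item The family is finite because each $\cK_p$ is a finite triangulation.
\item Each $N_p$ is compact subanalytic by \Cref{def:block-system}.
\item Each $\Pi_\sigma$ is compact subanalytic by \Cref{lem:flow-prisms}. It is the image of the compact subanalytic set $\sigma\times[0,1]$ under the subanalytic map $\Psi_\sigma$.
\item Since $X$ is compact, all these sets are globally subanalytic. The triangulation theorem then yields a finite simplicial complex $K$ and a subanalytic homeomorphism $|K|\to X$ under which every member of the family is a union of closed simplices, i.e.\ a subcomplex.
\end{itemize}

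Next, deduce the consequences from finite unions of subcomplexes. By \Cref{lem:flow-prisms} we have $\Theta_p=\bigcup_{\sigma\subset E_p}\Pi_\sigma$, a finite union of subcomplexes, hence a subcomplex. Likewise $\widehat N_p=N_p\cup\Theta_p$ is a subcomplex, and so is $X_A=\bigcup_{p\in A}\widehat N_p$ for every down-set $A$.

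For $A'\subset A$ down-sets, $X_{A'}\subset X_A$ is then an inclusion of subcomplexes of a finite simplicial complex. Such an inclusion is a CW pair and hence a cofibration, by the homotopy extension property for CW pairs.

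The only point needing care is confirming that each $\Pi_\sigma$ is compact subanalytic, which is the main (mild) obstacle. Subanalyticity of $\Psi_\sigma$ relies on $\tau_{qp}$ being subanalytic and continuous on $\sigma\subset\mathcal C_{qp}$ (\Cref{lem:contact-branches}). It also relies on the flow being subanalytic on the bounded time range $[0,T_G]$, by (F2) of \Cref{prop:analytic-finiteness}. Compactness follows from continuity, so the image is compact subanalytic, as images of relatively compact subanalytic sets under proper subanalytic maps are subanalytic.

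The triangulation need not be compatible with the $\cK_p$ themselves. Only the listed sets must be subcomplexes, so no refinement argument is required.
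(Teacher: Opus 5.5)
Your proposal is correct and follows essentially the same route as the paper: apply Shiota's simultaneous triangulation theorem to the finite family $\{N_p\}\cup\{\Pi_\sigma\}$, observe that $\Theta_p$, $\widehat N_p$ and $X_A$ are finite unions of members of this family and hence subcomplexes, and conclude that an inclusion of subcomplexes is a cofibration. Your extra justification that each $\Pi_\sigma$ is compact subanalytic goes slightly beyond the paper's direct citation of the flow-prism lemma, but it is not a different argument.
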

	
	\begin{proof}
		There are finitely many blocks and finitely many simplices in the flow-subordinate triangulations. By \Cref{lem:flow-prisms}, every $\Pi_\sigma$ is compact subanalytic. Apply the simultaneous subanalytic triangulation theorem \cite{Shiota1988Triangulation} to the finite family consisting of all $N_p$ and all $\Pi_\sigma$. Each $\Theta_p$ is a finite union of the corresponding flow prisms, and therefore is a subpolyhedron. The same is then true for $\widehat N_p=N_p\cup\Theta_p$ and for every finite union $X_A=\bigcup_{p\in A}\widehat N_p$. An inclusion of subpolyhedra is a cofibration.
	\end{proof}
	
	\subsection{The Prismatic CW Structure}
	
	Here we use flow-subordinate triangulations to construct a different prismatic decomposition, this time of the classifying space $B\cL$. Fix a simplex $\sigma\subset E_p$ and a subset
	$\mathbf r=\set{q_1>_F\cdots>_F q_k}\subset R(\sigma)$. By \Cref{prop:segal}, the product
	$\sigma^\circ\times(\simp^k)^\circ$ lies in the nerve summand indexed by $(p>_F q_1>_F\cdots>_F q_k)$. We denote its image in $B\cL$ by $(\sigma,\set{p}\cup\mathbf r)$; this is homeomorphic to a cell, i.e., an open disk of dimension $\dim\sigma+k$. It will be convenient to write $(\sigma,\set{p})$ for the cell corresponding to each simplex $\sigma\subset N_p$. More generally, we write $(\sigma,\mathbf q)$ for the cell whose {\bf label chain} is $\mathbf q=\set{p}\cup\mathbf r$, so that $\max\mathbf q=p$. 
	
	\begin{lemma}\label{lem:prismatic-faces}
		Given cells $e=(\sigma,\mathbf q)$ and $e'=(\sigma',\mathbf q')$ with
		$q=\max\mathbf q$ and $p=\max\mathbf q'$, one has
		\[
		e\subset\overline{e'} \quad \text{if and only if} \quad \mathbf q\subset\mathbf q' \text{ and }
		\begin{cases}
			\sigma\leq\sigma' & \text{if }q=p,\\
			\sigma\subset\chi_{qp}(\sigma') & \text{if }q<_F p.
		\end{cases}
		\]
	\end{lemma}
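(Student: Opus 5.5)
The plan is to compute the closure $\overline{e'}$ directly from the geometric realisation, using \Cref{prop:segal} to describe the face maps. First I will record two bookkeeping facts. Each point of $B\cL$ has a unique nondegenerate representative $(z,\lambda)$ with $\lambda$ in an open simplex $(\simp^k)^\circ$, because the nerve is semi-simplicial and has no degeneracies. Combined with \Cref{lem:triangulations}, this shows that the cells $(\sigma,\mathbf q)$ partition $B\cL$. The key point is that the point $z$ lies in the interior of a unique simplex $\sigma$ of $\cK_{\max\mathbf q}$, and $\mathbf q-\set{\max\mathbf q}\subset R(\sigma)$ by \Cref{prop:segal}. So it suffices to determine which cells meet $\overline{e'}$, and then show that any cell meeting $\overline{e'}$ is contained in it.

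Write $\mathbf q'=(p=q_0>_F q_1>_F\cdots>_F q_k)$. Since $\sigma'\times\simp^k$ is compact, $\overline{e'}$ is the image of $\sigma'\times\simp^k$ in the nerve summand indexed by $\mathbf q'$ (for $k\ge1$, $\sigma'\subset\bigcap_i\mathcal C_{q_ip}$). I will sort a point $(x,\lambda)$ of this product by $j=\min\set{i\mid\lambda_i\neq0}$ and by its support $I=\set{i\mid\lambda_i\ne0}$. Repeated use of the face maps $d_i$ for $i>0$ (which forget contacts) and of $d_0=\chi$ identifies this point with a nondegenerate point whose label chain is $\mathbf q=\set{q_i\mid i\in I}$. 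Its spatial coordinate is $x$ when $j=0$, and $\chi_{q_jp}(x)$ when $j>0$; here I use \Cref{thm:factorisation}(iii) to collapse the iterated $d_0$'s into the single contact map $\chi_{q_jp}$. If $j=0$, the point lies in a cell $(\rho,\mathbf q)$ with $\rho\le\sigma'$. If $j>0$, then $q=q_j$ and the point lies in a cell $(\rho,\mathbf q)$ with $\rho^\circ\cap\chi_{qp}(\sigma')\ne\varnothing$. Because $\chi_{qp}(\sigma')$ is a subcomplex of $\cK_q$ by \Cref{lem:triangulations}(iii), this forces $\rho\subset\chi_{qp}(\sigma')$. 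This proves the forward implication.

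For the converse, take $\mathbf q\subset\mathbf q'$ with $\max\mathbf q=q$. In the case $q=p$ with $\sigma\le\sigma'$, the cell $e$ is the image of $\sigma^\circ\times(\text{face of }\simp^k)^\circ$ and so lies in $\overline{e'}$ directly. In the case $q<_F p$ with $\sigma\subset\chi_{qp}(\sigma')$, I will use that $\chi_{qp}$ is an embedding (\Cref{lem:contact-branches}) to write each $y\in\sigma^\circ$ uniquely as $y=\chi_{qp}(x)$ with $x\in\sigma'$. For $\mu$ in the open simplex on the labels of $\mathbf q$, I take $\lambda$ to be $\mu$ placed in the coordinates indexed by $\mathbf q\subset\mathbf q'$, with zeros elsewhere, so that $\lambda_0=0$. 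The computation of the previous paragraph then identifies $(x,\lambda)$ with $(y,\mu)$. I also need that $(y,\mu)$ is a legitimate point of the summand indexed by $\mathbf q$, i.e.\ that $y\in\mathcal C_{q_iq}$ for the lower labels $q_i\in\mathbf q$. This follows from $x\in\mathcal C_{q_ip}$ and the ordering $\tau_{qp}(x)<\tau_{q_ip}(x)$ from \Cref{cor:contact-order}, together with \Cref{thm:factorisation}(ii).

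The main obstacle is the careful identification in the middle step, namely that after all face identifications the point $(x,\lambda)$ with $\lambda_0=\cdots=\lambda_{j-1}=0$ is represented exactly by $(\chi_{q_jp}(x),\lambda|_I)$. This requires iterating \Cref{prop:segal}'s $d_0$ and checking, via the truncation law \eqref{eq:contact-factorisation}, that the composite of successive truncations equals $\chi_{q_jp}$ and is independent of the order in which intermediate labels are deleted. Once that is settled, the rest of the argument reduces to the subcomplex property of $\chi_{qp}(\sigma')$ and the injectivity of $\chi_{qp}$.
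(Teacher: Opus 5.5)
Your proposal is correct and follows the paper's proof. Both identify $\overline{e'}$ with the image of $\sigma'\times\simp^k$. Both treat the deletion of non-initial vertices as face maps $d_i$ with $i>0$, which forget contacts, and collapse the iterated $d_0$'s to $\chi_{qp}$ via the truncation law of \Cref{thm:factorisation}. Both then conclude from the fact that $\chi_{qp}(\sigma')$ is a subcomplex of $\cK_q$.

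Your version does more explicit bookkeeping than the paper: you show that the cells partition $B\cL$, you argue the converse separately, and you check that the truncated point is a legitimate point of the $\mathbf q$-summand. The underlying argument is the same.
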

	
	\begin{proof}
		Write $\mathbf q'=\set{p=q_0>_F q_1>_F\cdots>_F q_k}$. The closed product
		$\sigma'\times\simp^k$ has its usual product cell structure, whose cells are indexed by a face of $\sigma'$ and a nonempty subset $\mathbf q\subset\mathbf q'$ of the vertices of $\simp^k$. Set $q=\max\mathbf q$. If $q=p$, then the initial vertex survives. All deleted simplex vertices are therefore removed by face maps $d_i$ with $i>0$, which forget contacts by \Cref{prop:segal}. The corresponding cells are consequently $(\sigma,\mathbf q)$ with $\sigma\leq\sigma'$. Suppose instead that $q<_F p$, and write $q=q_j$. Passing to the face with vertex set $\mathbf q$ deletes the initial vertices $q_0,\ldots,q_{j-1}$. Repeated application of $d_0$, together with \Cref{thm:factorisation}, sends the base point $x$ to $\chi_{qp}(x)$. Any further deleted vertices are removed by face maps $d_i$ with $i>0$ and hence do not alter the base point. The resulting face is therefore carried onto the subcomplex $\chi_{qp}(\sigma')\subset\cK_q$, whose cells are precisely the $(\sigma,\mathbf q)$ with $\sigma\subset\chi_{qp}(\sigma')$.
	\end{proof}
	
	Recall that $\Kord(P)$ denotes the order complex of $P$, whose $k$-simplices are strictly descending chains of length $k+1$. 	For a down-set $A\subset P$, let $\cL_A$ denote the full subcategory of $\cL$ whose object space is $\coprod_{p\in A}N_p$. 
	
	\begin{lemma}\label{lem:prismatic-cw}
		The cells $(\sigma,\set{p}\cup\mathbf r)$, indexed by $\sigma\in\cK_p$ and
		$\mathbf r\subset R(\sigma)$, are the open cells of a finite regular CW structure on $B\cL$. Moreover, $
		\Ev\times~\Gamma:B\cL \to X\times \Kord(P)$
		is a topological embedding, and for every down-set $A\subset P$ the subspace $B\cL_A$ is the subcomplex consisting of the cells $(\sigma,\mathbf q)$ with $\mathbf q\subset A$.
	\end{lemma}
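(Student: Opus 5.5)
The plan is to describe $B\cL$ explicitly as a quotient of the finite disjoint union of closed prisms $\sigma\times\simp^k$. By \Cref{prop:segal}, the nerve summand indexed by a chain $\mathbf p=(p_0>_F\cdots>_F p_k)$ is $\bigcap_{i\ge1}\mathcal C_{p_ip_0}$. By \Cref{lem:triangulations}(i) this is a subcomplex of $\cK_{p_0}$, so it is the finite union of the closed simplices $\sigma$ with $\{p_1,\dots,p_k\}\subset R(\sigma)$. Hence $B\cL$ is a quotient of $\coprod_{(\sigma,\mathbf r)}\sigma\times\simp^{|\mathbf r|}$, and the interiors $\sigma^\circ\times(\simp^k)^\circ$ partition $B\cL$. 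The reason is that every point of the realisation has a unique nondegenerate representative: the nerve is semi-simplicial with no degeneracies, so the representative is simply the one with all barycentric coordinates positive. The characteristic maps $\sigma\times\simp^k\to B\cL$ are the restrictions of the quotient map.

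\Cref{lem:prismatic-faces} shows that the boundary of each closed prism is carried onto a union of lower-dimensional cells. For the $d_0$-faces this uses \Cref{lem:triangulations}(iii), which says that $\chi_{qp}(\sigma')$ is a subcomplex of $\cK_q$. Finiteness is immediate, so $B\cL$ is a finite CW complex. The quotient topology agrees with the CW topology because there are only finitely many compact pieces.

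Regularity is the main obstacle. I would derive it from the embedding claim: if $\Ev\times\Gamma$ is injective on each closed prism, then each characteristic map is injective, hence a homeomorphism onto its closed cell by compactness. For injectivity on all of $B\cL$, take a point with nondegenerate representative $(x;\lambda)$ over the chain $\mathbf p$. Then $\Gamma$ recovers $\mathbf p$ and $\lambda$, since these are the open-simplex coordinates of $\Kord(P)$. Next, $\Ev(x;\lambda)=\phi_s(x)$ with $s=\ip{\lambda,t^x}$, and this point lies on the $p_0$-transit of $x$. Transit points determine their exit point and time uniquely, by \Cref{lem:transit-space}: $\ev_{p_0}$ is injective. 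So $x$ is recovered as well.

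When $k=0$ the map $\Gamma$ records only the vertex $p$, and $\Ev$ is the identity on $N_p$, so injectivity holds there too. Note that $\Ev$ maps the $N_{p}$-vertex part and the transits with different chains into possibly overlapping subsets of $X$; the factor $\Gamma$ is what separates them. Since $B\cL$ is compact and $X\times\Kord(P)$ is Hausdorff, the continuous injection $\Ev\times\Gamma$ (continuous by \Cref{lem:evaluation-well-defined}) is an embedding. Restricting it to a closed prism gives the injectivity of the characteristic maps, and hence regularity.

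Finally, consider a down-set $A$. The full subcategory $\cL_A$ has morphisms $\mathcal C_{qp}$ with $p,q\in A$, so its nerve is exactly the union of the summands over chains contained in $A$. Since $A$ is downward closed, every face of such a chain again lies in $A$. The cells $(\sigma,\mathbf q)$ with $\mathbf q\subset A$ are therefore closed under taking faces, by \Cref{lem:prismatic-faces}, and so they form a subcomplex. Its realisation is $B\cL_A$, and it carries the subspace topology because it is a finite union of closed cells.
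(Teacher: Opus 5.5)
Your overall route is the paper's. You build the prisms $\sigma\times\simp^{\mathbf q}$ from \Cref{prop:segal} and \Cref{lem:triangulations}(i), get the open cells from nondegenerate representatives, and get boundaries from \Cref{lem:prismatic-faces}. You prove injectivity of $\Ev\times\Gamma$ by letting $\Gamma$ recover the chain and $\lambda$ and then recovering $x$ from its transit, and you handle down-sets the same way. The regularity step, however, contains a non sequitur. You prove that $\Ev\times\Gamma$ is injective on $B\cL$, and then say that restricting it to a closed prism gives injectivity of the characteristic map $\kappa_{\sigma,\mathbf q}:\sigma\times\simp^{\mathbf q}\to B\cL$. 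That does not follow. Injectivity of a map defined on $B\cL$ is applied after the gluing, so it cannot detect whether $\kappa_{\sigma,\mathbf q}$ already identifies two distinct points of the abstract polytope. Your nondegenerate-representative argument only gives injectivity on the open prism $\sigma^\circ\times(\simp^{\mathbf q})^\circ$. On the boundary of the prism, the face identifications of the realisation could a priori glue distinct points. For instance, two points $(x,\lambda)$ and $(x',\lambda)$ with $\lambda_0=0$ would be identified if $\chi_{q_1q_0}(x)=\chi_{q_1q_0}(x')$.

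What is needed, and what the paper does, is to check injectivity of the composite $(\Ev\times\Gamma)\circ\kappa_{\sigma,\mathbf q}$ directly on the closed polytope, in unreduced coordinates. On the fixed closed simplex $\simp^{\mathbf q}$, the map $\Gamma\circ\kappa_{\sigma,\mathbf q}$ is the inclusion of a closed simplex of $\Kord(P)$, so it recovers $\lambda$ including its zero entries. Next, $\Ev\circ\kappa_{\sigma,\mathbf q}(x,\lambda)=\phi_{\ip{\lambda,t^x}}(x)$ with $0\le\ip{\lambda,t^x}\le\tau_{q_0}(x)$. Injectivity of $\ev_{q_0}$ from \Cref{lem:transit-space} then recovers the original base point $x\in\sigma$; when $|\mathbf q|=1$ the map is just the inclusion $\sigma\subset N_{q_0}$. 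This is exactly where injectivity along transits rules out extra gluing on the $d_0$-faces. The fix uses only ingredients you already have. Once it is in place, compactness makes each $\kappa_{\sigma,\mathbf q}$ an embedding of a closed ball, and the rest of your argument goes through. Note also that calling $B\cL$ a CW complex presupposes Hausdorffness, which you only obtain from the embedding, so that step should come after the injectivity argument.
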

	
	\begin{proof}
		Consider a chain $\mathbf q=(q_0>_F\cdots>_F q_k)$ in $P$ with $k\geq1$. By \Cref{prop:segal,lem:triangulations}(i), $N_k\cL(\mathbf q)$ is identified with the subcomplex of $\cK_{q_0}$ consisting of those simplices $\sigma$ for which $\set{q_1,\ldots,q_k}\subset R(\sigma)$; for $k=0$ it is all of $\cK_{q_0}$. Hence $B\cL$ is the  quotient of a finite disjoint union of compact polytopes $\sigma\times\simp^{\mathbf q}$, one for each such chain $\mathbf q$ and each simplex $\sigma$ of the corresponding subcomplex, by the simplicial and semi-simplicial face identifications. In particular, $B\cL$ is compact. Let
		$\kappa_{\sigma,\mathbf q}:\sigma\times\simp^{\mathbf q}\to B\cL$ denote the resulting maps.
		
		{\em 1. Embedding:} We first show that $\Ev\times\Gamma$ is an embedding. Every point of $B\cL$ has a representative $(x,\lambda)$ with all barycentric coordinates positive, obtained by deleting the zero-coordinate vertices through the corresponding face maps. For such a representative, $\Gamma(x,\lambda)$ lies in the relative interior of a unique simplex of $\Kord(P)$, and therefore determines both the chain $q_0>_F\cdots>_F q_k$ and the barycentric coordinates $\lambda$. If $k=0$, evaluation is simply the inclusion $N_{q_0}\hookrightarrow X$, and hence determines $x$. If $k\geq1$, write $z=\Ev(x,\lambda)=\phi_s(x)$. Since all barycentric coordinates are positive, $s>0$. The point $x\in E_{q_0}$ is noncritical, so strict descent of $f$ makes it the unique point on the backward orbit of $z$ lying on the level $\{f=c_{q_0}-\epsilon\}$. Thus $x$ is also determined, proving injectivity. Since $B\cL$ is compact and $X\times \Kord(P)$ is Hausdorff, $\Ev\times\Gamma$ is a topological embedding. 
		
		{\em 2. Regularity:} We also claim that each characteristic map $\kappa_{\sigma,\mathbf q}$ is injective. On the fixed simplex $\simp^{\mathbf q}$, the value of $\Gamma$ determines the barycentric coordinates, including their zero entries. Given the evaluation point, the original $x\in\sigma$ is recovered either directly when the induced flow time is zero, or as the unique point of the backward orbit on the level $\{f=c_{\max\mathbf q}-\epsilon\}$ when that time is positive. Hence $(\Ev\times\Gamma)\circ\kappa_{\sigma,\mathbf q}$, and therefore $\kappa_{\sigma,\mathbf q}$, is injective. Thus $\kappa_{\sigma,\mathbf q}$ embeds the convex polytope $\sigma\times\simp^{\mathbf q}$, a closed ball of dimension $\dim\sigma+|\mathbf q|-1$. Every point has a reduced representative with positive barycentric coordinates, whose base point lies in the relative interior of a unique simplex of the relevant triangulation. It follows that the sets
		$\kappa_{\sigma,\mathbf q}(\sigma^\circ\times(\simp^{\mathbf q})^\circ)$ are pairwise disjoint and cover $B\cL$. 
		Finally, \Cref{lem:prismatic-faces} shows that the boundary of each closed cell is a union of cells of strictly smaller dimension.
		
		{\em 3. Down-Set Filtration:} A cell belongs to $B\cL_A$ exactly when its label chain $\mathbf q$ is contained in $A$. Since every face has label chain contained in $\mathbf q$ by \Cref{lem:prismatic-faces}, these cells form a CW subcomplex, as desired.
	\end{proof}
	
	The regular CW decomposition constructed above depends on the triangulations $\cK_p$ of the isolating blocks $N_p$. We call this the {\bf prismatic} structure induced by the $\cK_p$.
	
	\begin{proposition}\label{prop:order-complex}
		For every down-set $A\subset P$, we have $
		\Gamma^{-1}(\Kord(A))=B\cL_A$.
	\end{proposition}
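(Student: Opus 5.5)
We prove the equality $\Gamma^{-1}(\Kord(A))=B\cL_A$ by checking the two inclusions separately, using only the definition of $\Gamma$ and the cell description in \Cref{lem:prismatic-cw}. Recall that $\Gamma$ is defined componentwise on the nerve. On the summand $N_k\cL(\mathbf q)\times\simp^k$ indexed by a chain $\mathbf q=(q_0>_F\cdots>_F q_k)$, it forgets the base point and keeps $\lambda$, so it lands in the simplex of $\Kord(P)$ spanned by $\mathbf q$. Here $\Kord(A)$ means the full subcomplex of $\Kord(P)$ on the vertex set $A$. Since $A$ is a down-set, it is the order complex of $A$ with the restricted order.

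For the inclusion $B\cL_A\subset\Gamma^{-1}(\Kord(A))$, we use that $B\cL_A$ is the realisation of the nerve of the full subcategory $\cL_A$. Every simplex of that nerve is indexed by a chain lying in $A$. Equivalently, by \Cref{lem:prismatic-cw}, $B\cL_A$ is the union of the closed cells $(\sigma,\mathbf q)$ with $\mathbf q\subset A$. Each such closed cell is the image of $\sigma\times\simp^{\mathbf q}$, and $\Gamma$ sends it into the simplex $\simp^{\mathbf q}\subset\Kord(A)$.

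For the reverse inclusion, we take $z\in B\cL$ with $\Gamma(z)\in\Kord(A)$. As in step 1 of the proof of \Cref{lem:prismatic-cw}, we choose the reduced representative $(x,\lambda)$ of $z$, in which all barycentric coordinates are positive and the chain is $\mathbf q=(q_0>_F\cdots>_F q_k)$. Then $\Gamma(z)$ lies in the relative interior of the simplex spanned by $\mathbf q$. A point of $\Kord(P)$ lies in the subcomplex $\Kord(A)$ exactly when the support of its open simplex is contained in $A$. Hence every $q_i$ lies in $A$, and $z$ lies in the open cell $(\sigma,\mathbf q)$, where $\sigma$ is the simplex with $x\in\sigma^\circ$. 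By \Cref{lem:prismatic-cw} this cell belongs to $B\cL_A$.

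The one point needing care is that the reduced representative is well defined: deleting zero coordinates via face maps must not change the class in $B\cL$, and it must give positive coordinates on the correct label chain. This was already established in step 1 of the proof of \Cref{lem:prismatic-cw}, so no new obstacle arises. The down-set hypothesis on $A$ enters only to identify $B\cL_A$ with the subcomplex of cells whose label chains lie in $A$, as that lemma states.
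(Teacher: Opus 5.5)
Your proof is correct and is essentially the paper's argument. Both rest on the fact that $\Gamma$ carries each open cell $(\sigma,\mathbf q)$ into the open simplex of $\Kord(P)$ with vertex set $\mathbf q$, combined with the description in \Cref{lem:prismatic-cw} of $B\cL_A$ as the cells with $\mathbf q\subset A$; you only spell out the two inclusions separately, and your appeal to the down-set hypothesis to identify $\Kord(A)$ with the full subcomplex on $A$ is harmless but unnecessary.
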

	
	\begin{proof}
		On the interior of a cell $(\sigma,\mathbf q)$, the map $\Gamma$ lands in the relative interior of the simplex of $\Kord(P)$ with vertex set $\mathbf q$. Hence this cell lies over $\Kord(A)$ precisely when $\mathbf q\subset A$, which by \Cref{lem:prismatic-cw} is precisely the condition for it to belong to $B\cL_A$.
	\end{proof}

	\subsection{Homotopy Type} We are now able to prove Theorem (A) from the Introduction.
	
	\begin{theorem}\label{thm:classifying-space}
		The evaluation map $\Ev:B\cL\to X$
		is a simple homotopy equivalence.
	\end{theorem}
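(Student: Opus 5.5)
We prove that $\Ev$ is a simple homotopy equivalence by induction over down-sets, using the embedding $\Ev\times\Gamma$ of \Cref{lem:prismatic-cw} and the cofibrations of \Cref{prop:global-triangulation}. Fix a linear extension $p_1<p_2<\cdots<p_n$ of the flow order, and set $A_j=\set{p_1,\ldots,p_j}$. Each $A_j$ is a down-set. We show inductively that $\Ev$ restricts to a simple homotopy equivalence $B\cL_{A_j}\to X_{A_j}$. The case $j=n$ is the theorem, since $X_P=X$ by \Cref{prop:transit-lattice}.

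For the inductive step write $A=A_j$, $A'=A_{j-1}$ and $p=p_j$.

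\begin{itemize}
\item On the space side, $X_A=X_{A'}\cup\widehat N_p$. By \Cref{prop:transit-lattice}(ii) and the fact that $p$ is maximal in $A$, we have $\widehat N_p\cap X_{A'}=\Theta_p\cap X_{A'}$. This is the union of the terminal faces $\Psi_\sigma(\sigma\times\set{1})$ of the flow prisms, together with the intermediate contact loci.
\item On the category side, $B\cL_A$ is obtained from $B\cL_{A'}$ by adjoining the cells $(\sigma,\set{p}\cup\mathbf r)$ of \Cref{lem:prismatic-cw}.
\end{itemize}

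Both sides are therefore pushouts along cofibrations. I would compare them with the gluing lemma for simple homotopy equivalences, applied to a map of pushout squares, and then use the inductive hypothesis.

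The key local comparison is the following. Let $B\cL_{\le p}$ be the union of the cells with $\max\mathbf q=p$ together with their faces. It maps onto $N_p\cup\Theta_p$. For a simplex $\sigma\subset E_p$ with $R(\sigma)=\set{q_1>_F\cdots>_F q_k}$, the prismatic cells over $\sigma$ form $\sigma\times\simp^{\set{p}\cup R(\sigma)}$. Restricted to this product, $\Ev$ sends the edge path $p\to q_1\to\cdots\to q_k$ linearly in time onto the flow prism $\Pi_\sigma\cong\sigma\times[0,1]$. This holds because the contact times $0<\tau_{q_1p}<\cdots<\tau_{q_kp}$ are strictly increasing by \Cref{cor:contact-order}, and $\tau_{q_kp}=\tau_{\dest(\sigma)p}$ by \Cref{lem:simplex-contacts}. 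The simplex $\simp^{\set{p}\cup R(\sigma)}$ collapses onto its spine, i.e. the chain of consecutive edges. Equivalently, I would construct an explicit acyclic matching on the cells $(\sigma,\mathbf q)$ with $\sigma\subset E_p$: pair cells by adding or removing the smallest label of $R(\sigma)\setminus\mathbf q$ that lies above ... The rule should be chosen so that the unmatched cells are exactly those whose label chain is a consecutive segment of $\set{p}\cup R(\sigma)$. These unmatched cells map homeomorphically onto a subdivision of the prisms: $\Pi_\sigma$ cut at the contact times, which is a subpolyhedron by \Cref{prop:global-triangulation}.

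Once this is in place, I expect the following consequence. After a sequence of elementary collapses in $B\cL_A$, which are simple homotopy equivalences and are compatible with the faces coming from $B\cL_{A'}$, the map $\Ev$ becomes a homeomorphism onto a cellular model of $\widehat N_p$ glued to $X_{A'}$. Moreover $\Ev$ is homotopic, relative to $B\cL_{A'}$, to the composite of the collapse with this homeomorphism. A homeomorphism between finite CW complexes is simple by Chapman's theorem, or more elementarily because both complexes subdivide a common subanalytic triangulation. Combining this with the inductive hypothesis via the gluing lemma gives the inductive step.

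The main obstacle is the compatibility of these collapses across different $\sigma$. Faces of $\sigma$ have larger $R(\rho)\supset R(\sigma)$ by \Cref{lem:simplex-contacts}(iii), so the spine of a face is longer than the spine of $\sigma$. In addition, the $d_0$-faces identify cells with cells of lower blocks via $\chi_{qp}$ (\Cref{lem:prismatic-faces}). So the matching must be defined uniformly by a rule that depends only on $(\mathbf q,R(\sigma))$, and its acyclicity must be checked using a discrete Morse function graded first by $\dim\sigma$ and then by the label chain. I would first try the rule \emph{toggle the largest element of $R(\sigma)$ not in $\mathbf q$ that skips a gap}. I would verify acyclicity by noting that every gradient path strictly decreases $|\mathbf q|$ at fixed $\sigma$, or moves to a proper face of $\sigma$, which is a well-founded process. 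If exact homeomorphism onto the prisms fails at the walls, I would instead show directly that $\Ev$ restricted to each closed product $\sigma\times\simp^{\mathbf q}$ is cell-like with contractible point preimages. Each such fibre is a face of a simplex, by the uniqueness of the backward level crossing used in \Cref{lem:prismatic-cw}. I would then deduce simplicity from the resulting collapsible mapping cylinder.
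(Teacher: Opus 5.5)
Your inductive route depends entirely on a local collapse that you never construct, and the natural version of it fails. The matching rule is left unfinished. Worse, the cells meant to survive, namely those whose label chain is a vertex or an edge of the spine of $\simp^{\set{p}\cup R(\sigma)}$, do not form a subcomplex. \Cref{ex:monkey} gives a concrete case. A wall vertex $\rho$ of the exit set $E_p$ of a maximum has $R(\rho)=\set{q',q}$, with $q'$ the pole and $q$ a minimum. The adjacent edge $\sigma$ on the minimum-landing side has $R(\sigma)=\set{q}$. By \Cref{lem:prismatic-faces}, the surviving cell $(\sigma,\set{p,q})$ contains $(\rho,\set{p,q})$ in its closure. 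That is exactly the non-spine edge of the triangle $(\rho,\set{p,q',q})$, i.e.\ the face that must be free to collapse this triangle onto its spine. You flag this obstacle yourself, but nothing in the proposal resolves it, and the inductive step has no other support.

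The same wall also undermines the gluing step. The set $\widehat N_p\cap X_{A'}=\Theta_p\cap X_{A'}$ is not just terminal faces plus contact points. By \Cref{thm:factorisation} it contains the entire tail of every $p$-transit after its first nonterminal contact, since that tail is an $r$-transit with $r\in A'$. At the wall, this tail lies both in the image of the closure of the surviving cell $(\sigma,\set{p,q})$ and in the image of $B\cL_{A'}$. So $\Ev$ cannot become a homeomorphism on the $p$-part. The intersection corner your gluing lemma needs is then a comparison of the same nature as the theorem itself, and you do not address it.

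Your fallback is closer to what is needed but omits the essential step. Cell-likeness of $\Ev$ on each closed product $\sigma\times\simp^{\mathbf q}$ says nothing about the global fibre. A point $z$ typically has representatives in many such products, over different simplices and, through the $d_0$ identifications, over different blocks. A map can be injective on every closed cell without being cell-like: fold a circle onto an arc. The missing idea is a global fibre computation.
\begin{itemize}
\item Write $z=\phi_t(x)$ using the transit whose source label $p$ is maximal; this label is unique by \Cref{prop:transit-lattice}(ii).
\item List its complete contact chain $p=q_0>_F\cdots>_F q_m$ with times $0=t_0<\cdots<t_m=\tau_p(x)$.
\item Use strict descent on exit levels together with \Cref{thm:factorisation} to show that every representative of $z$ is a face of this single nerve simplex.
\end{itemize}
The embedding $\Ev\times\Gamma$ of \Cref{lem:prismatic-cw} then identifies $\Ev^{-1}(z)$ with $\set{\lambda\in\simp^m\mid\sum_i\lambda_it_i=t}$. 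This is a hyperplane slice of a simplex, not a face as you state. Hence $\Ev$ is cell-like, and Chapman's cell-like mapping theorem gives simplicity directly, with no induction over down-sets and no matching. Finally, deducing simplicity from a ``collapsible mapping cylinder'' would require $\Ev$ to be PL, which a subanalytic map need not be. Chapman's theorem is exactly what lets a merely continuous cell-like map suffice.
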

	
	\begin{proof}
		Evaluation is continuous by \Cref{lem:evaluation-well-defined} and surjective by \Cref{prop:transit-lattice}(i). If $z$ belongs to no transit space $\Theta_p$, every representative of $z$ is a zero-simplex. Since the blocks are disjoint, the fibre is then a singleton.
		
		Otherwise choose $z=\phi_t(x)$, with $x\in E_p$ and $0\leq t\leq\tau_p(x)$, whose source label $p$ is maximal among transit representations of $z$. Such a label exists and is unique by finiteness and \Cref{prop:transit-lattice}(ii). Write the complete contact list of this transit as $p=q_0>_F q_1>_F\cdots>_F q_m$, with times $0=t_0<t_1<\cdots<t_m=\tau_p(x)$.
		
		We claim that every representative of $z$ is carried by the simplex on this list. A representative with positive nerve degree determines another transit through $z$. Its source lies on the same orbit as $x$. If its source label is $p$, its source point is $x$, by strict descent on the common exit level. Otherwise maximality of $p$ and \Cref{prop:transit-lattice}(ii) place its source at a later exit contact of the $p$-transit, before or at $z$. Contact factorisation identifies its remaining contacts with the corresponding tail of the list. A zero-simplex representative is either $x$ or a contact point of this transit: indeed the transit meets its own block only at $x$, and every other block it meets is recorded in the list.
		
		Conversely, \Cref{prop:segal} realises the complete list as a nerve simplex. Each nonempty subset is a face. Deleting its initial vertices is legitimate because every contact preceding another contact lies in an exit set, and \Cref{thm:factorisation} identifies the resulting source and tail. Thus all representatives just described are faces of this single simplex. Since $\Ev\times\Gamma$ is an embedding, projection identifies the fibre homeomorphically with
		\[
		\set{(\lambda_0,\ldots,\lambda_m)\in\simp^m\mid
			\sum_{i=0}^m\lambda_it_i=t}.
		\]
		This is a nonempty convex polytope. The source is a finite regular CW complex, hence a finite polyhedron, and the target is a compact triangulated manifold. Every fibre has trivial shape, so evaluation is a {\bf cell-like map} \cite{Lacher1969}. Chapman's cell-like mapping theorem now makes it a simple homotopy equivalence \cite[Theorem~2]{Chapman1973CellLike}; here simple homotopy types are taken with respect to the prismatic structure of \Cref{lem:prismatic-cw} on $B\cL$, which is regular and hence polyhedral, and the triangulation of \Cref{prop:global-triangulation} on $X$.
	\end{proof}
	
	The following result shows that the homotopy equivalence established above is compatible with the down-set filtrations of $B\cL$ and $X$.
	
	\begin{theorem}\label{thm:filtered-classifying}
		For every down-set $A\subset P$, evaluation restricts to a simple homotopy equivalence $
		\Ev_A:B\cL_A\longrightarrow X_A$, and
		these maps are natural under inclusion of down-sets.
	\end{theorem}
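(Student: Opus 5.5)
The argument reruns the proof of \Cref{thm:classifying-space} with everything restricted to a down-set $A$. The point is that $\cL_A$, $X_A$ and the evaluation map interact exactly as $\cL$, $X$ and $\Ev$ do. There are three steps, taken in order.

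\emph{Step 1: $\Ev$ carries $B\cL_A$ onto $X_A$.} First we check that $\Ev(B\cL_A)\subset X_A$. A cell of $B\cL_A$ has its label chain $\mathbf q$ inside $A$, so its points evaluate either into some $N_p$ with $p\in A$ or into a transit $\Theta_p$ with $p=\max\mathbf q\in A$. Both lie in $\widehat N_p\subset X_A$. Surjectivity onto $X_A$ then comes from the definition $X_A=\bigcup_{p\in A}(N_p\cup\Theta_p)$. Every point of $\Theta_p$ has the form $\phi_t(x)$ with $x\in E_p$ and $t\le\tau_p(x)$. Such a point is the evaluation of a nerve simplex on the full contact list of $x$. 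By \Cref{thm:contact-support}, every label on that list is $<_F p$, so by downward closure the simplex lies in $B\cL_A$.

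\emph{Step 2: the fibres of $\Ev_A$ are the fibres of $\Ev$.} This is the step I expect to be the main obstacle. The cell-like property does not restrict to subspaces automatically, so we need $\Ev^{-1}(z)\subset B\cL_A$ for every $z\in X_A$. Take $z\in X_A$. If $z$ lies in no transit space, its fibre is a single $0$-cell in some $N_p$, and $p\in A$ by disjointness of the blocks. Otherwise let $p$ be the maximal source label among transit representations of $z$, as in the proof of \Cref{thm:classifying-space}. That proof shows the whole fibre is a convex polytope inside the nerve simplex indexed by $p=q_0>_F\cdots>_F q_m$. It remains to show $p\in A$. We know $z\in\widehat N_{p'}$ for some $p'\in A$. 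If $p'\ne p$, then \Cref{prop:transit-lattice}(ii) makes $p$ and $p'$ comparable, and maximality of $p$ gives $p'<_F p$. The remaining danger is that $z$ is reached from $N_{p'}$ while its maximal representative comes from outside $A$. I would rule this out as follows. Because $z\in\Theta_{p'}\cup N_{p'}$ and both transits lie on one orbit, the argument in \Cref{prop:transit-lattice}(ii) shows that the $p$-transit contacts $N_{p'}$ at or before $z$. The fibre description then places every representative in faces of a single simplex. I expect the way out is to use \Cref{prop:order-complex}: compute the fibre of the restricted map $B\cL_A\to X_A$ directly, as the intersection of that convex polytope with $\Gamma^{-1}(\Kord(A))$. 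This intersection is the face of the polytope spanned by the vertices lying in $A$. That face is nonempty because $z$ has a representative starting from $N_{p'}$, and it is convex. So every fibre of $\Ev_A$ is a nonempty convex polytope, hence cell-like, even if $p\notin A$.

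\emph{Step 3: conclude.} By \Cref{lem:prismatic-cw}, $B\cL_A$ is a finite regular subcomplex, hence polyhedral. By \Cref{prop:global-triangulation}, $X_A$ is a compact subpolyhedron of $X$. Chapman's theorem then applies to the cell-like map $\Ev_A$ exactly as in \Cref{thm:classifying-space}, and $\Ev_A$ is a simple homotopy equivalence. Naturality is immediate. For down-sets $A'\subset A$, the inclusion $B\cL_{A'}\subset B\cL_A$ is an inclusion of subcomplexes, and $X_{A'}\subset X_A$ is an inclusion of subpolyhedra. Since $\Ev_{A'}$ and $\Ev_A$ are both restrictions of the single map $\Ev$, the square of inclusions commutes strictly.
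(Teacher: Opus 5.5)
Your proposal is correct and follows the paper's proof. You prove surjectivity onto $X_A$, identify $\Ev_A^{-1}(z)=\Ev^{-1}(z)\cap\Gamma^{-1}(\Kord(A))$ via \Cref{prop:order-complex} as a nonempty coordinate face of the convex slice, apply Chapman's theorem, and get naturality because each $\Ev_A$ is a restriction of $\Ev$. One correction: drop the heading claim of Step 2 that the fibres of $\Ev_A$ are the fibres of $\Ev$. It is false in general, since the full fibre over $z\in X_A$ leaves $B\cL_A$ whenever $z$ lies on the tail of a transit from some $p\notin A$. What does the work is the intersection argument you end with, not containment.
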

	
	\begin{proof}
		The image is $X_A$: every block in $A$ is in the image, and every transit from such a block is evaluated by the edge to its terminal contact, whose label also lies in $A$. If a full evaluation fibre is a singleton, surjectivity ensures that its restriction over $z\in X_A$ is the same singleton. Otherwise consider a fibre of the full evaluation map, identified in the preceding proof with a convex slice of the simplex on a complete contact list
		$q_0>_F\cdots>_F q_m$. By \Cref{prop:order-complex},
		\[
		\Ev_A^{-1}(z)
		=
		\Ev^{-1}(z)\cap\Gamma^{-1}(\Kord(A)).
		\]
		Since $A$ is downward closed, its intersection with the contact chain is either empty or a terminal segment $q_j>_F\cdots>_F q_m$. For $z\in X_A$, surjectivity of the restricted evaluation makes the intersection of the fibre with this face nonempty. Under the simplex coordinates above the restricted fibre is obtained by imposing
		$\lambda_0=\cdots=\lambda_{j-1}=0$. It is therefore again a nonempty convex polytope. By \Cref{lem:prismatic-cw}, $B\cL_A$ is a finite CW subcomplex, while
		\Cref{prop:global-triangulation} makes $X_A$ a finite subpolyhedron of $X$. Another appeal to Chapman's cell-like mapping theorem ensures that
		$\Ev_A:B\cL_A\to X_A$ is a simple homotopy equivalence. Naturality is immediate because every $\Ev_A$ is the restriction of the global $\Ev$.
	\end{proof}

	\section{The Landfall Chain Complex}\label{sec:compression}
	
	In this section we compress the prismatic CW structure on $B\cL$ along the forward flow by means of discrete Morse theory. The surviving cells are the non-exit simplices of the blocks, and their Morse boundary is the landfall differential of Theorem (B). Lifting the same compression to the universal cover recovers Whitehead and Reidemeister torsion. Fix flow-subordinate triangulations $\cK_p$ of a tight block system $\set{\epsilon,N_p}$ as in \Cref{lem:triangulations}.
	
	\subsection{Flow-Induced Acyclic Partial Matching}
	
	Here we obtain a compressed chain model for $B\cL$ by applying discrete Morse theory \cite{Forman1998,dmpers} to its prismatic CW structure. The basic idea is to preserve the simplices of $N_p$ which are not contained in $E_p$ and pair away all other cells along the forward flow. Explicitly, for every $\sigma\subset E_p$ and every $\mathbf r\subset R(\sigma)$ with $\dest(\sigma)\notin\mathbf r$, pair
	\[
	e=(\sigma,\{p\}\cup\mathbf r)
	\quad\text{with}\quad
	e'=(\sigma,\{p,\dest(\sigma)\}\cup\mathbf r).
	\]
	The set $\mathbf r$ may be empty. In particular, each exit simplex is paired with its terminal prism. Let $\Sigma$ denote this collection of cell-pairs.
	
	\begin{lemma}\label{lem:acyclic-matching}
		The collection $\Sigma$ is an acyclic partial matching on the prismatic CW model of $B\cL$. Its critical cells are precisely the block cells $(\sigma,\{p\})$ with $\sigma\not\subset E_p$. Moreover, $\Sigma$ restricts to an acyclic matching on $B\cL_A$ for every down-set $A\subset P$.
	\end{lemma}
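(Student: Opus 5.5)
We have to check three things in turn: that $\Sigma$ is a partial matching by regular codimension-one faces, that it has no closed $V$-paths, and that it restricts to each $B\cL_A$. Throughout, the cell $e=(\sigma,\{p\}\cup\mathbf r)$ with $\sigma\subset E_p$ and $q:=\dest(\sigma)\notin\mathbf r$ is paired with $e'=(\sigma,\{p,q\}\cup\mathbf r)$.

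\emph{Step 1: $\Sigma$ is a matching by regular faces.} Since $q=\min R(\sigma)$ by \Cref{lem:simplex-contacts}, we have $q\in R(\sigma)$, so $e'$ is a genuine cell. By \Cref{lem:prismatic-faces} with equal maxima, $e$ is a face of $e'$, and its dimension is exactly one less. In the product polytope $\sigma\times\simp^{\mathbf q'}$, the cell $e$ is the facet obtained by deleting a non-initial vertex. Because the characteristic maps are embeddings (\Cref{lem:prismatic-cw}), this face is regular.

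Every cell $(\sigma,\mathbf q)$ with $\sigma\subset E_{\max\mathbf q}$ is now classified by whether $\dest(\sigma)\in\mathbf q$. If $\dest(\sigma)\in\mathbf q$, the cell is an upper partner. If $\dest(\sigma)\notin\mathbf q$, it is a lower partner. Since $\dest(\sigma)<_F p$, it is never the top label, so this dichotomy is exhaustive and exclusive. Each cell therefore occurs in exactly one pair. The unmatched cells are exactly those with $\sigma\not\subset E_p$; these have $R(\sigma)=\varnothing$ by convention, so they are the block cells $(\sigma,\{p\})$.

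\emph{Step 2: acyclicity.} This is the main obstacle. A $V$-path alternates between a lower cell $e_i$, its partner $e_i'$, and a codimension-one face $e_{i+1}\ne e_i$ of $e_i'$ which is again a lower cell. I would attach to each cell the pair $(\max\mathbf q,\ \sigma)$ and show that along a $V$-path this pair weakly decreases, and that some finer quantity strictly decreases, so that no cycle can occur.

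By \Cref{lem:prismatic-faces}, a facet of $e'=(\sigma,\{p,q\}\cup\mathbf r)$ arises in one of three ways.
\begin{enumerate}[label=(\alph*)]
\item Delete the top label $p$. The base simplex moves into $\cK_{q'}$ with $q'<_F p$, so the top label strictly drops.
\item Delete a label of $\mathbf r$. The resulting cell still contains $\dest(\sigma)$, so it is an upper cell and cannot continue the path.
\item Keep the labels and replace $\sigma$ by a facet $\rho$. For the result to be a lower cell we need $\dest(\rho)\notin\{p,q\}\cup\mathbf r$. By \Cref{lem:simplex-contacts}(iii), $R(\sigma)\subset R(\rho)$, so $\dest(\rho)\leq_F q$ and in fact $\dest(\rho)<_F q$.
\end{enumerate}
Case (a) strictly lowers $\max\mathbf q$. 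Case (c) keeps $\max\mathbf q$ fixed and strictly lowers $\dim\sigma$. Hence the lexicographic pair $(\max\mathbf q,\dim\sigma)$ strictly decreases along any $V$-path, using a linear extension of $<_F$ for the first coordinate. Finiteness of the complex then rules out closed $V$-paths.

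\emph{Step 3: restriction to down-sets.} If a lower cell $e$ lies in $B\cL_A$, then its labels lie in $A$. Since $\dest(\sigma)<_F p\in A$ and $A$ is a down-set, $\dest(\sigma)\in A$, so the partner $e'$ also lies in $B\cL_A$. Conversely, if $e'$ lies in $B\cL_A$, then so does its face $e$. Thus $\Sigma$ restricts to a matching on each $B\cL_A$, and this restriction is acyclic because every $V$-path in $B\cL_A$ is also a $V$-path in $B\cL$.
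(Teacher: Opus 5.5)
Your proof is correct and is essentially the paper's argument. The paper likewise uses \Cref{lem:prismatic-faces} to show that the source label and then the base simplex cannot increase along a zigzag path, so both are constant on a putative cycle, and then observes, as in your case (b), that with both fixed the only lower-cell facet of $e'$ is $e$ itself; your lexicographic potential $(\max\mathbf q,\dim\sigma)$ packages this as a strict decrease, though your facet list should also mention deleting the label $\dest(\sigma)$, which returns $e$ and is excluded by $e_{i+1}\neq e_i$.
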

	
	\begin{proof}
		By construction, each cell occurs in at most one $\Sigma$-pair. If $\sigma\subset E_p$, then $(\sigma,\{p\}\cup\mathbf r)$ is the lower member of a pair when $\dest(\sigma)\notin\mathbf r$ and the upper member otherwise. All remaining cells have $T=\varnothing$ and $\sigma\not\subset E_p$, so these are precisely the critical cells.
		
		It remains to establish acyclicity. Consider distinct $\Sigma$-pairs $e_i=(\sigma_i,\{p_i\}\cup\mathbf r_i)$ and $e'_i=(\sigma_i,\{p_i,\dest(\sigma_i)\}\cup\mathbf r_i)$ forming a zigzag path of length $k>1$, with each downward step of codimension one:
		\[
		e_1<e'_1>e_2<e'_2>\cdots>e_k<e'_k.
		\]
		Proceeding by contradiction, assume that $e'_k>e_1$. By \Cref{lem:prismatic-faces}, each downward step forces $p_i\geq_F p_{i+1}$, so all of the $p_i$ must equal a single $p\in P$. Thus all $\sigma_i$ lie in the same $\cK_p$. The same face lemma now gives $\sigma_{i+1}\leq\sigma_i$, so all of these simplices must equal a single $\sigma$. After $e_1=(\sigma,\{p\}\cup\mathbf r)$ and $e'_1=(\sigma,\{p,\dest(\sigma)\}\cup\mathbf r)$, the only downward facet with the same source and base simplex which omits $\dest(\sigma)$ is $e_1$ itself. Every other such facet is an upper matched cell, and hence cannot be $e_2$; this contradicts our assumption that the zigzag path above is a cycle. Finally, all labels in a cell lie below its source. If that source belongs to a down-set $A$, both members of its matched pair belong to $B\cL_A$. Hence the matching restricts as asserted.
	\end{proof}
	
	\subsection{The Morse Boundary Operator}
	
	Give every simplex an orientation and work over $\Z$ unless coefficients are specified. For each $p$, let $Q_{p,*}=C_*(N_p,E_p)$ and write $d_p$ for its relative simplicial boundary. We identify $Q_{p,*}$ with the graded subgroup of $C_*(N_p)$ spanned by the simplices not contained in $E_p$; this subgroup need not be a subcomplex. The boundary $\partial\alpha$ of each such simplex $\alpha$ therefore decomposes uniquely as $d_p\alpha+\beta_p\alpha$, where $\beta_p\alpha$ lies in $C_{*-1}(E_p)$. Thus $\beta_p$ records the exit faces discarded by the relative boundary. Set $Q_*:=\bigoplus_p Q_{p,*}$ and $\mathsf E_*:=\bigoplus_p C_*(E_p)$; similarly write $d:=\bigoplus_p d_p$ and $\beta:=\bigoplus_p\beta_p$. 
	If $\sigma\subset E_p$ has destination $q=\dest(\sigma)$, then $\chi_{qp}(\sigma)$ is a subcomplex of $\cK_q$. Orient this image by $\chi_{qp}|_\sigma$ and denote its simplicial fundamental chain by $(\chi_{qp}|_\sigma)_*[\sigma]$.
	
	\begin{lemma}\label{lem:terminal-chain}
		No simplex of $(\chi_{qp}|_\sigma)_*[\sigma]$ lies in $E_q$.
	\end{lemma}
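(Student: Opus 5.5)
The chain $(\chi_{qp}|_\sigma)_*[\sigma]$ is supported on the simplices of $\cK_q$ whose union is the subcomplex $\chi_{qp}(\sigma)$, by \Cref{lem:triangulations}(iii). I will argue by contradiction. Suppose some simplex $\rho$ of $\cK_q$ appearing in this chain with nonzero coefficient lies in $E_q$. Since the chain is the pushforward of the fundamental class of $\sigma$ under the embedding $\chi_{qp}|_\sigma$ (\Cref{lem:contact-branches}(iii)), the simplices carrying nonzero coefficients are exactly the top-dimensional simplices of $\chi_{qp}(\sigma)$. Such a $\rho$ has dimension $\dim\sigma$, and its relative interior meets $\chi_{qp}(\sigma^\circ)$ in a nonempty, indeed open dense, subset of $\rho^\circ$. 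By invariance of domain applied to the homeomorphism $\sigma\to\chi_{qp}(\sigma)$, the image of the relative boundary $\partial\sigma$ is nowhere dense in the top-dimensional part of the image. So I can pick $x\in\sigma^\circ$ with $\chi_{qp}(x)\in\rho^\circ\subset E_q$.

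Next I use that $q=\dest(\sigma)$. By \Cref{lem:simplex-contacts}(ii) we have $\sigma^\circ\subset E_{qp}$, so $x$ lies in the $q$-chamber. Then \Cref{prop:chambers} gives $\chi_{qp}(x)\in\partial N_q-E_q$, since a terminal contact lies in the entrance part of $\partial N_q$. This contradicts $\chi_{qp}(x)\in E_q$. Hence no simplex with nonzero coefficient lies in $E_q$.

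The main obstacle is the first step: guaranteeing that a simplex of the chain contained in $E_q$ actually contains the image of an interior point of $\sigma$, and not only images of boundary points of $\sigma$. Those boundary points may lie in other chambers, where \Cref{prop:chambers} does show $\chi_{qp}$ landing in $E_q$. I would handle this with the dimension count above. The map $\chi_{qp}(\partial\sigma)$ is a subcomplex of dimension less than $\dim\sigma$, so any top-dimensional simplex $\rho$ of $\chi_{qp}(\sigma)$ has interior points outside it. By injectivity, these points are images of points of $\sigma^\circ$. If the statement is meant to include lower-dimensional simplices of the subcomplex, that is false in general: boundary faces may land in $E_q$ via walls (\Cref{cor:wall}). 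So I read the statement as concerning the simplices supporting the chain.
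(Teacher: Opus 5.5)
Your proposal is correct and is essentially the paper's argument in contrapositive form. Since $\sigma^\circ\subset E_{qp}$, \Cref{prop:chambers} keeps $\chi_{qp}(\sigma^\circ)$ off $E_q$, so $\chi_{qp}(\sigma)\cap E_q$ lies in the lower-dimensional subcomplex $\chi_{qp}(\partial\sigma)$ and cannot contain a top-dimensional simplex of the fundamental chain. One small simplification: injectivity is not needed to place points of $\rho^\circ$ in $\chi_{qp}(\sigma^\circ)$, because $\chi_{qp}(\sigma)=\chi_{qp}(\sigma^\circ)\cup\chi_{qp}(\partial\sigma)$ already suffices.
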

	
	\begin{proof}
		By \Cref{prop:chambers}, the image $\chi_{qp}(\sigma^\circ)$ is disjoint from $E_q$. Hence $\chi_{qp}(\sigma)\cap E_q$ is contained in $\chi_{qp}(\partial\sigma)$, which has dimension less than $\dim\sigma$. Both the image and $E_q$ are subcomplexes, so no simplex of dimension $\dim\sigma$ in the image can therefore lie in $E_q$; these are exactly the simplices occurring in its fundamental chain.
	\end{proof}
	
	\begin{definition}\label{def:landfall-operator}
		The {\bf landfall operator} $L:\mathsf E_*\to Q_*$ sends each oriented simplex $\sigma\subset E_p$ to the chain $(\chi_{qp}|_\sigma)_*[\sigma]$ in $Q_{q,*}$, where $q=\dest(\sigma)$, and extends linearly.
	\end{definition}
	
	Write $L_{qp}:C_*(E_p)	\to Q_{q,*}$ for the $(q,p)$ component of $L$. The operator $L$ has degree zero and strictly lowers the flow order. The preceding lemma allows us to regard its values as chains consisting entirely of critical cells, without discarding any top-dimensional terms.
	
	\begin{lemma}\label{lem:morse-boundary}
		The Morse differential of $\Sigma$ is
		\begin{equation}\label{eq:morse-differential}
			d^{\mathrm{LF}}=d+L\beta.
		\end{equation}
	\end{lemma}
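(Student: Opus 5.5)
We compute the Morse differential by the standard discrete Morse path formula: for critical cells $\alpha$ (of dimension $n$) and $\gamma$ (of dimension $n-1$), the coefficient of $\gamma$ in $d^{\mathrm{LF}}\alpha$ is the sum, over all gradient paths
\[
\alpha>e_1<e_1'>e_2<\cdots<e_k'>\gamma,
\]
of the product of incidence numbers, where each $(e_i,e_i')\in\Sigma$ and each downward step has codimension one. The plan is to classify these paths completely using \Cref{lem:prismatic-faces} and the description of $\Sigma$.

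The critical cells are the block cells $(\alpha,\{p\})$ with $\alpha\not\subset E_p$. Their codimension-one faces in the prismatic structure come from two sources, since the label chain $\{p\}$ admits no face with a different maximum. The first source is faces $(\rho,\{p\})$ with $\rho$ a facet of $\alpha$. If $\rho\not\subset E_p$, the face is critical, and these faces contribute exactly $d_p\alpha$. If $\rho\subset E_p$, the face is the lower member of the pair $(\rho,\{p\})<(\rho,\{p,\dest(\rho)\})$, and these faces are precisely the terms of $\beta_p\alpha$.

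Next we follow a path from such an $e_1=(\rho,\{p\})$. The cell $e_1'=(\rho,\{p,q\})$, with $q=\dest(\rho)$, has closed image the flow prism $\Pi_\rho$. Its codimension-one faces are of three kinds. There are side faces $(\rho',\{p,q\})$ with $\rho'$ a facet of $\rho$. There is the source face $e_1$ itself. Finally, there is the target face, which by \Cref{lem:prismatic-faces} is carried onto $\chi_{qp}(\rho)\subset\cK_q$ and consists of the cells $(\tau,\{q\})$ with $\tau$ a top simplex of $\chi_{qp}(\rho)$. By \Cref{lem:terminal-chain}, every such $\tau$ is critical, so each path terminates at the next step with coefficient given by $(\chi_{qp}|_\rho)_*[\rho]=L\rho$.

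The side faces $(\rho',\{p,q\})$ need a separate check. If $\dest(\rho')=q$, this cell is an upper matched cell, so it is not a valid lower member and the path cannot continue. If $\dest(\rho')\neq q$, then $q\in R(\rho')$ by \Cref{lem:simplex-contacts}(iii), so the cell is a lower member, paired with $(\rho',\{p,\dest(\rho'),q\})$. These continuations must be shown to cancel, and this is the main obstacle. The first approach we would try is a sign-cancellation argument: such paths come in pairs related by the relation $\partial^2=0$ in the closed prism over $\rho'$. Alternatively we could use \Cref{cor:wall}: $\dest(\rho')<_F q$ and the $q$-contact of $\rho'$ factors, so the contributions through $\{p,q\}$ and through $\{p\}$ directly would cancel pairwise. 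If neither works, we would instead verify the formula via the algebraic Morse/perturbation formula, $d+\beta$-terms composed with the inverse of the matched part, and show that only length-one zigzags survive after orientation signs are fixed consistently with $\Psi_\rho$.

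Once these paths are handled, the two sources combine to give the coefficient of $\gamma$ as that of $d\alpha+L\beta\alpha$, with signs fixed by orienting the prism $\rho\times[0,1]$ so that the incidences $[e_1':e_1]$ and $[e_1':\text{target}]$ have opposite signs. This yields \eqref{eq:morse-differential}.
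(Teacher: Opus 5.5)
Your handling of the first two steps is correct and matches the paper. Facets of $\alpha$ not contained in $E_p$ give $d_p\alpha$. Each exit facet $\rho$ leads to $e_1'=(\rho,\{p,\dest(\rho)\})$, whose target-face cells are critical by \Cref{lem:terminal-chain} and contribute $L\rho$, with the sign convention you describe.

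\textbf{The gap.} The step you flag as the main obstacle is never resolved, and the mechanisms you propose for it aim at the wrong thing. The continuations through side faces $(\rho',\{p,q\})$ with $\dest(\rho')\neq q$ do not need to cancel: no such gradient path ever ends at a critical cell. Your appeal to \Cref{cor:wall} in particular has nothing to pair against. Since $\rho'$ is a codimension-two face of $\alpha$, no gradient path from $\alpha$ passes through $(\rho',\{p\})$. Your fallback, showing that only length-one zigzags survive, needs exactly the fact you are missing.

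\textbf{The missing idea} is a monotone grading. For a cell $(\rho,\mathbf q)$ put $\nu=|\mathbf q|-1$.
\begin{itemize}
\item A matching step raises $\nu$ by exactly one.
\item A codimension-one face step lowers $\nu$ by at most one. By \Cref{lem:prismatic-faces}, the label chain can only shrink and the base dimension cannot increase: either $\sigma\le\sigma'$, or $\sigma\subset\chi_{qp}(\sigma')$ with $\chi_{qp}$ an embedding. Meanwhile the total dimension $\dim\sigma+\nu$ drops by exactly one.
\end{itemize}
Hence $\nu$ is nondecreasing along the upper cells of any gradient path. A side face of $e_1'$ has $\nu=1$ and its matched partner has $\nu=2$. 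Every later downward step therefore lands on a cell with $\nu\ge 1$, whereas every critical cell has $\nu=0$. So side-face continuations contribute nothing. Only the direct facets and the single zigzags through the terminal prisms survive, which gives \eqref{eq:morse-differential}.

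A minor point: in the path formula each matched step carries the weight $-[e_i':e_i]^{-1}$, not just a product of incidences. Your final convention, with opposite signs on $[e_1':e_1]$ and the target face, implicitly uses this.
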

	
	\begin{proof} Let $\alpha\subset N_p$ be an oriented critical simplex. Each facet $\alpha' < \alpha$ in $(N_p-E_p)$ contributes its usual incidence coefficient $[\alpha:\alpha']$, giving $d_p\alpha$. An exit facet $\sigma$ of dimension $k$ is instead paired with $e_\sigma := (\sigma,\{p,q\})$, where $q=\dest(\sigma)$. Orient this prism as $(-1)^k[\sigma]\times[p,q]$, with the edge directed from $p$ to $q$. Its two horizontal faces contribute $-[\sigma]+L\sigma$. Since its incidence with the matched source is $-1$, cancellation contributes $[\alpha:\sigma]L\sigma$ to the Morse boundary of $\alpha$, where $[\alpha:\sigma] \in \set{\pm 1}$ is the induced local orientation. By \Cref{lem:terminal-chain}, every term in this target chain is critical.
		
		It remains to rule out contributions through the side faces of $e_\sigma$. For a cell $(\rho,\mathbf q)$ put $\nu=|\mathbf q|-1$. A reversed matching edge raises $\nu$ by one. A downward facet lowers it by at most one: by \Cref{lem:prismatic-faces}, the label chain can only shrink and the base dimension cannot increase, while the total dimension drops by exactly one. Thus $\nu$ cannot decrease across a complete gradient-path step. A side face of $e_\sigma$ has $\nu=1$. Every continuation through matched cells therefore has $\nu\geq1$, and cannot end at a critical cell, all of which have $\nu=0$. The only critical contributions are consequently $d_p\alpha$ and $L\beta_p\alpha$, as claimed.
	\end{proof}
	
	\begin{definition}\label{def:landfall-complex}
		The {\bf landfall chain complex} is $\mathsf C=(Q_*,d^{\mathrm{LF}})$, with $d^{\mathrm{LF}}=d+L\beta$. For a down-set $A\subset P$, write $\mathsf C_A$ for its subcomplex on $\bigoplus_{p\in A}Q_{p,*}$.
	\end{definition}
	
	\begin{theorem}\label{thm:landfall-complex}
		The landfall chain complex is the cellular Morse complex of $\Sigma$. The Morse contraction is compatible with every down-set subcomplex $B\cL_A$.
	\end{theorem}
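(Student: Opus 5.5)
The plan is to assemble three ingredients already established: the acyclic matching $\Sigma$ of \Cref{lem:acyclic-matching}, the identification of critical cells with the simplices of $N_p$ not contained in $E_p$, and the computation of the Morse differential in \Cref{lem:morse-boundary}. The main theorem of algebraic discrete Morse theory \cite{Forman1998,dmpers,Skoldberg2006} applies because the prismatic CW structure of \Cref{lem:prismatic-cw} is finite and regular. Regularity makes all incidence numbers $\pm1$ or $0$, and in particular makes every matched incidence invertible over $\Z$. Under these conditions the theorem supplies a chain homotopy equivalence between the cellular chain complex $C_*(B\cL)$ and the Morse complex on the free module spanned by critical cells. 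It also supplies explicit maps: an inclusion $\jmath^\Sigma$, a projection $\pi^\Sigma$ and a homotopy $h^\Sigma$, each given by finite sums over gradient paths. Identifying the critical cell $(\alpha,\{p\})$ with the oriented simplex $\alpha$ identifies the Morse chain groups with $Q_*=\bigoplus_p Q_{p,*}$, and \Cref{lem:morse-boundary} identifies the Morse differential with $d+L\beta$. This shows that the landfall complex is the cellular Morse complex of $\Sigma$. It also proves, as a by-product, that $(d^{\mathrm{LF}})^2=0$, which I will not check directly.

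The orientation conventions need care. The critical-cell orientation must agree with the simplex orientation used in $Q_{p,*}$. The prism orientation $(-1)^k[\sigma]\times[p,q]$ used in \Cref{lem:morse-boundary} must be the one entering the Morse formula $\partial^\Sigma=\partial_{cc}-\partial_{cm}(\partial_{mm})^{-1}\partial_{mc}$. Since that lemma already performs the sign bookkeeping, I will only cite it.

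For compatibility with down-sets, I will use the last assertion of \Cref{lem:acyclic-matching}: for each down-set $A$, $\Sigma$ restricts to an acyclic matching on the subcomplex $B\cL_A$. The central point is that $B\cL_A$ is closed under the moves that make up a gradient path. It is a subcomplex, so it is closed under taking facets. It is also closed under passing from a lower matched cell to its partner, because the partner adds the label $\dest(\sigma)<_F p\in A$. Hence every gradient path that starts at a cell of $B\cL_A$ stays inside $B\cL_A$. Consequently the maps $\jmath^\Sigma$, $\pi^\Sigma$ and $h^\Sigma$ preserve the subcomplexes $C_*(B\cL_A)$ and, on the Morse side, $\mathsf C_A$. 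The Morse complex of the restricted matching is therefore exactly $\mathsf C_A$, with differential the restriction of $d+L\beta$. This is consistent with the formula itself: $L$ strictly lowers the flow order, so $\mathsf C_A$ is preserved. Naturality under inclusions $A'\subset A$ follows because all of these maps are restrictions of the global ones.

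I expect the main obstacle to be this filtration-compatibility step rather than the identification of the differential. Specifically, I must verify that the Morse contraction data $(\jmath^\Sigma,\pi^\Sigma,h^\Sigma)$, and not just the differential, restrict to $B\cL_A$. My first approach is to argue directly that every term in the path-sum formulas stays inside $B\cL_A$, using the label-shrinking statement of \Cref{lem:prismatic-faces}. If that becomes awkward, I will instead invoke the filtered version of the Morse contraction, as in Sköldberg's homological perturbation formulation: a matching that respects a filtration by subcomplexes yields a filtered contraction.
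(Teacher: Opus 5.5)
Your proposal is correct and follows the paper's own proof: critical cells are identified with the generators of $Q_*$ via \Cref{lem:acyclic-matching}, the differential is read off from \Cref{lem:morse-boundary} (so $(d+L\beta)^2=0$ comes for free), and down-set compatibility holds because every gradient path stays inside $B\cL_A$. Facets only shrink the label chain, and a reversed matching step keeps the source label $p$ while adding only $\dest(\sigma)<_F p$; this is exactly the paper's observation, and the direct path-sum argument you give first suffices, with no filtered perturbation machinery needed.
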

	
	\begin{proof}
		The critical generators are exactly those of $Q_*$ by \Cref{lem:acyclic-matching}, and \Cref{lem:morse-boundary} computes their differential. In particular, $(d+L\beta)^2=0$. Every downward cellular step preserves or lowers the source label, while every reversed matching step preserves it. The maps and homotopies in the Morse contraction therefore preserve every down-set.
	\end{proof}
	
	\begin{corollary}\label{cor:filtered-chains}
		For every commutative ring $\Lambda$ there is a zigzag of chain maps, natural over down-sets $A\subset P$, whose components are chain homotopy equivalences
		\[
		\mathsf C_A\otimes_\Z\Lambda
		\simeq C_*^{\mathrm{cell}}(B\cL_A;\Lambda)
		\simeq \Sing_*(B\cL_A;\Lambda)
		\simeq \Sing_*(X_A;\Lambda).
		\]
	\end{corollary}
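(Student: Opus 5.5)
The corollary splits into three equivalences, each natural in the down-set $A$. I will prove each one and then check that the connecting maps commute with the inclusions $A'\subset A$. Throughout, everything is over $\Z$, with $\otimes_\Z\Lambda$ applied at the end. This is harmless because all the complexes involved are bounded and levelwise free, so chain homotopy equivalences survive base change.

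\emph{First equivalence.} By \Cref{thm:landfall-complex}, $\mathsf C$ is the cellular Morse complex of the acyclic matching $\Sigma$ on the prismatic CW structure of \Cref{lem:prismatic-cw}. Forman's algebraic discrete Morse theory \cite{Forman1998,dmpers} supplies explicit chain maps between $C^{\mathrm{cell}}_*(B\cL)$ and the Morse complex, together with a chain homotopy contracting one composite to the identity. These maps are given by sums over gradient paths. \Cref{lem:acyclic-matching} says $\Sigma$ restricts to $B\cL_A$, and the proof of \Cref{thm:landfall-complex} says every step of a gradient path preserves or lowers the source label. Together these show that the inclusion, the projection and the homotopy all carry $C^{\mathrm{cell}}_*(B\cL_A)$ into itself. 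Hence they restrict to chain homotopy equivalences $\mathsf C_A\simeq C^{\mathrm{cell}}_*(B\cL_A)$ that commute with inclusion along $A'\subset A$. Here I need $\mathsf C_A$ to coincide with the Morse complex of the restricted matching. This holds because the critical cells of $B\cL_A$ are exactly the $(\sigma,\{p\})$ with $p\in A$ and $\sigma\not\subset E_p$.

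\emph{Second equivalence.} For any CW pair of subcomplexes, the standard comparison between cellular and singular chains is a zigzag. It passes through the subcomplex of singular chains that are filtered by skeleta, $\Sing_*^{\mathrm{filt}}(B\cL_A)\subset\Sing_*(B\cL_A)$, and maps onto $C^{\mathrm{cell}}$ via $H_n(B\cL_A^{(n)},B\cL_A^{(n-1)})$. Both legs are quasi-isomorphisms between complexes of free modules that are bounded below, hence chain homotopy equivalences. Both are visibly natural with respect to cellular inclusions, and \Cref{lem:prismatic-cw} shows that $B\cL_{A'}\subset B\cL_A$ is such an inclusion. An alternative is to use a regular CW complex's barycentric subdivision, which is a simplicial complex. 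Then the simplicial and singular chain maps give a single natural chain map.

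\emph{Third equivalence.} \Cref{thm:filtered-classifying} says $\Ev_A$ is a (simple) homotopy equivalence restricting the global $\Ev$. So $(\Ev_A)_*$ is a natural chain map $\Sing_*(B\cL_A)\to\Sing_*(X_A)$, and it is a homotopy equivalence on singular chains, as is any map induced by a homotopy equivalence. Naturality is strict because every $\Ev_A$ is a restriction of one map. The main obstacle I expect is to be careful about what "natural" means. The componentwise chain homotopy equivalences commute strictly with inclusions, but the homotopy inverses and homotopies need only be compatible up to the filtration. For the first stage this is guaranteed by the down-set preservation above. For the others, I state naturality for the zigzag maps themselves. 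If a filtered (rather than objectwise) statement is wanted, I would note that all the complexes are filtered by free subcomplexes, and that a filtered quasi-isomorphism of such bounded-below filtered free complexes is a filtered homotopy equivalence.
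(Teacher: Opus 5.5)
Your proposal is correct and follows essentially the same route as the paper. You restrict the Morse contraction of $\Sigma$ to each subcomplex $B\cL_A$, using that $\Sigma$ restricts and that gradient paths never raise the source label. You then use a cellular--singular comparison that is natural under subcomplex inclusions, and note that each $\Ev_A$ is the restriction of the global $\Ev$ from \Cref{thm:filtered-classifying}. Your version spells out details the paper leaves implicit. One harmless slip: singular chains are not bounded, but this does not matter, since chain homotopy equivalences are preserved by $-\otimes_\Z\Lambda$ in any case.
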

	
	\begin{proof}
		Use the Morse contraction, the cellular--singular comparison and the evaluation map of \Cref{thm:filtered-classifying}. The first two comparisons can be chosen on the whole finite CW complex and restricted to its down-set subcomplexes. Evaluation is already natural under these inclusions.
	\end{proof}
	
	Thus we have proved Theorem (B) from the Introduction. The diagonal differential is the ordinary relative boundary, and its strictly order-lowering perturbation is $\Delta=L\beta$. The identity $(d+\Delta)^2=0$ includes all the compatibility relations needed for the construction.
	
	\subsection{Torsion}\label{sec:torsion}
	
	Assume that $X$ is connected, choose a basepoint and write $\pi=\pi_1X$. Let $\varpi:\widetilde X\to X$ be the universal cover, and put $\widetilde N_p=\varpi^{-1}(N_p)$ and $\widetilde E_p=\varpi^{-1}(E_p)$. These are full inverse images; they need not be connected. Lift the triangulations $\cK_p$ and choose one lift of each oriented simplex. Then
	\[
	Q_*^\pi=\bigoplus_p C_*(\widetilde N_p,\widetilde E_p)
	\]
	is a finite based free $\Z[\pi]$-module in each degree, generated by the chosen lifts of non-exit simplices. Its relative boundary $d^\pi$ and exit boundary $\beta^\pi$ are defined by the same splitting of the simplicial boundary as before.
	
	The gradient flow lifts uniquely to a $\pi$-equivariant flow $\widetilde\phi$ on $\widetilde X$. For $\widetilde x$ above $x\in\mathcal C_{qp}$, define $\widetilde\chi_{qp}(\widetilde x)=\widetilde\phi_{\tau_{qp}(x)}(\widetilde x)$. This gives an equivariant lift of the contact map. Sending a lifted exit simplex to its terminal contact chain defines the lifted landfall operator $L^\pi$. Expressed in the chosen target lifts, its values take the form
	\[
	L^\pi\widetilde\sigma
	=(\widetilde\chi_{qp}|_{\widetilde\sigma})_*[\widetilde\sigma]
	=\sum_{\omega,g}e_{\omega,g}\,g[\widetilde\omega],
	\qquad q=\dest(\sigma),
	\]
	with finitely many nonzero integers $e_{\omega,g}$. By \Cref{lem:terminal-chain}, the target simplices are again non-exit simplices. Recall that a chain homotopy equivalence of based free $\Z[\pi]$-complexes is simple if its Whitehead torsion vanishes in  $\operatorname{Wh}(\pi)$ \cite{Milnor1966,Cohen1973}.
	
	\begin{theorem}\label{thm:equivariant-landfall}
		The complex $\mathsf C^\pi=(Q_*^\pi,d^\pi+L^\pi\beta^\pi)$ is simple chain homotopy equivalent to $C_*^{\mathrm{cell}}(\widetilde X)$. More precisely, there is a zigzag of filtration-preserving chain maps which restricts, for every down-set $A\subset P$, to simple chain homotopy equivalences
		\[
		\mathsf C_A^\pi
		\simeq_s C_*^{\mathrm{cell}}(\widetilde{B\cL_A})
		\simeq_s C_*^{\mathrm{cell}}(\widetilde X_A).
		\]
		Here $\pi_1(B\cL)$ is identified with $\pi$ by evaluation, and tildes denote full inverse images in the corresponding universal covers.
	\end{theorem}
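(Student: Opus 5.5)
We lift the whole construction of \Cref{sec:compression} to universal covers and use that discrete Morse collapses and cell-like maps are simple equivalences. First, since $\Ev:B\cL\to X$ is a homotopy equivalence (\Cref{thm:classifying-space}), it induces an isomorphism on $\pi_1$. This identifies $\pi_1(B\cL)$ with $\pi$ and lets us form $\widetilde{B\cL}$ as the pullback of $\widetilde X$ along $\Ev$. For a down-set $A$, set $\widetilde{B\cL_A}$ and $\widetilde X_A$ to be the full preimages, in keeping with the statement. The prismatic regular CW structure of \Cref{lem:prismatic-cw} lifts to a $\pi$-free regular CW structure on $\widetilde{B\cL}$. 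Lifting a cell $(\sigma,\mathbf q)$ amounts to choosing a lift $\widetilde x$ of its base point together with the lifted flow $\widetilde\phi$ applied to it. Concretely, the lifted cells are pairs $(\widetilde\sigma,\mathbf q)$ with $\widetilde\sigma$ a lift of $\sigma$, and the face relation of \Cref{lem:prismatic-faces} lifts verbatim with $\chi_{qp}$ replaced by $\widetilde\chi_{qp}$. The face relation lifts because the lifted flow is unique and equivariant.

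Next, lift the matching $\Sigma$ equivariantly by pairing $(\widetilde\sigma,\{p\}\cup\mathbf r)$ with $(\widetilde\sigma,\{p,\dest(\sigma)\}\cup\mathbf r)$. This is a $\pi$-invariant matching whose critical cells are the lifts of non-exit block simplices. It is acyclic because any gradient cycle would project to a cycle downstairs, contradicting \Cref{lem:acyclic-matching}. The lifted matching gives an equivariant Morse complex, which is a based free $\Z[\pi]$-complex on the chosen lifts. The proof of \Cref{lem:morse-boundary} goes through unchanged in the cover: incidences are computed locally, and the horizontal face of the lifted terminal prism is exactly $L^\pi\widetilde\sigma$. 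Hence the Morse complex is $\mathsf C^\pi$. The Morse contraction is a simple chain homotopy equivalence. This follows either from the fact that an acyclic matching on a finite regular complex is realised by a sequence of elementary collapses (after equivariant subdivision, pair by pair in a linear extension of the acyclic order), or from the direct algebraic check. For the algebraic check, filter by the acyclic order and note that each matched pair contributes an elementary acyclic complex $\Z[\pi]\xrightarrow{\pm g}\Z[\pi]$, whose torsion is $\pm g$ and hence trivial in $\operatorname{Wh}(\pi)$. Down-set compatibility follows as in \Cref{thm:landfall-complex}: the lifted paths never raise the source label, so the contraction restricts to each $\mathsf C_A^\pi$.

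For the second equivalence, we use that $\Ev_A$ is a simple homotopy equivalence (\Cref{thm:filtered-classifying}), obtained via Chapman's cell-like theorem. Its lift $\widetilde\Ev_A:\widetilde{B\cL_A}\to\widetilde X_A$ is again cell-like, because each fibre is a convex polytope that lifts homeomorphically (a contractible fibre has trivial covers). By the standard characterisation, the Whitehead torsion of a homotopy equivalence of finite complexes is computed from the induced map on cellular chains of universal covers, using the triangulation of \Cref{prop:global-triangulation} on $X_A$. When $X_A$ is disconnected or $\pi_1X_A\to\pi$ is not injective, we apply the Whitehead-group functoriality: the torsion of the map over $\Z[\pi]$ is the image of the componentwise torsions, which are all zero. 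This gives $C_*^{\mathrm{cell}}(\widetilde{B\cL_A})\simeq_s C_*^{\mathrm{cell}}(\widetilde X_A)$.

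The main obstacle is this last step for proper down-sets, where the spaces are full preimages rather than universal covers. There we must argue that $\Z[\pi]\otimes_{\Z[\pi_1 X_A]}(-)$ carries simple equivalences to simple ones, componentwise. We also must make sure that the triangulation of $X$ and the prismatic structure on $B\cL$ are the ones for which Chapman's theorem produces zero torsion. To settle this, I would first try to quote Chapman's theorem in its $\pi$-equivariant form for the cell-like map between the covering spaces.
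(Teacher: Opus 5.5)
Your proposal is correct and follows essentially the paper's route. You lift the prismatic CW structure and the matching $\Sigma$ equivariantly, and note that matched incidences become units $\pm g$, so algebraic Morse cancellation gives a simple, down-set-preserving equivalence with $\mathsf C^\pi$. You then transport the simple equivalences $\Ev_A$ componentwise, using that the induced map from each component's Whitehead group to $\operatorname{Wh}(\pi)$ preserves vanishing torsion; so the equivariant form of Chapman's theorem you contemplate at the end is not needed.

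The one detail you leave implicit, which the paper states, concerns the filtered claim. To get a single filtration-preserving chain map that restricts to every down-set $A$, choose one cellular approximation of $\Ev$ carrying each of the finitely many subcomplexes $B\cL_A$ into $X_A$, rather than approximating each $\Ev_A$ separately.
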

	
	\begin{proof}
		Lift the prismatic CW structure and the matching $\Sigma$ equivariantly to $\widetilde{B\cL}$. There are finitely many orbits of cells and matched pairs. Each matched incidence is $\pm1$ downstairs and hence $\pm g$ upstairs for some $g\in\pi$. These are units in $\Z[\pi]$, so algebraic discrete Morse cancellation \cite{Skoldberg2006,JollenbeckWelker2009} applies. The critical orbits are the lifts of non-exit block simplices, and the calculation in \Cref{lem:morse-boundary} gives $d^\pi+L^\pi\beta^\pi$.
		Each cancellation has torsion represented by a unit, which vanishes in $\operatorname{Wh}(\pi)$. The Morse equivalence is therefore simple. Evaluation is a simple homotopy equivalence by \Cref{thm:classifying-space}, so its cellular lift gives the second simple chain equivalence. Since the Morse contraction preserves every down-set, a cellular approximation to evaluation may also be chosen to preserve these finitely many subcomplexes. Restricting to a down-set and applying \Cref{thm:filtered-classifying} proves the filtered assertion. On each component, extension of coefficients to $\Z[\pi]$ preserves the vanishing of Whitehead torsion.
	\end{proof}
	
	Let $\kk$ be a field. A bounded acyclic complex of finite-dimensional $\kk$-vector spaces
	with chosen bases has a well-defined (Reidemeister) {\bf torsion} $\RT(C_*)\in\kk^\times$
	\cite[\S 3]{Milnor1966}; we normalise so that \[0\to\kk\xrightarrow{a}\kk\to0,\] with the
	source of $a$ in degree one, has torsion $a^{-1}$. Regard the row-vector space $\kk^m$ as a right $\Z[\pi]$-module $\kk_\rho^m$ by $v\cdot g=v\rho(g)$.
	
	\begin{corollary}\label{cor:landfall-torsion}
		If $H_*(X;\kk_\rho^m)=0$, then
		\[
		\RT_\rho(X)=\RT\bigl(\kk_\rho^m\otimes_{\Z[\pi]}\mathsf C^\pi\bigr)
		\quad\bmod\{\pm\det\rho(g)\mid g\in\pi\}.
		\]
		The same conclusion holds for every acyclic down-set or interval quotient of the filtered complex.
	\end{corollary}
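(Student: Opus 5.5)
The plan is to deduce the corollary from \Cref{thm:equivariant-landfall} together with the invariance properties of Reidemeister torsion. The twisted torsion $\RT_\rho(X)$ is, by definition, the torsion of $\kk_\rho^m\otimes_{\Z[\pi]}C_*^{\mathrm{cell}}(\widetilde X)$ with respect to the lifted cellular basis, taken for any triangulation of $X$. It is well defined modulo $\{\pm\det\rho(g)\}$: these are exactly the changes produced by reordering cells, reorienting them, or replacing a chosen lift $\widetilde e$ by $g\widetilde e$. I will use the triangulation of \Cref{prop:global-triangulation}. Combinatorial invariance of $\RT_\rho$ then ensures that nothing depends on this choice.

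\emph{Step 1.} Tensor the zigzag of simple chain homotopy equivalences from \Cref{thm:equivariant-landfall} with $\kk_\rho^m$ over $\Z[\pi]$.
\begin{itemize}
\item Each map becomes a chain homotopy equivalence of finite based $\kk$-complexes, since tensoring preserves chain homotopies.
\item Acyclicity passes along the zigzag. Chain homotopy equivalences induce isomorphisms on homology, and $H_*(X;\kk_\rho^m)=0$ by hypothesis, so every complex in the zigzag, including $\kk_\rho^m\otimes\mathsf C^\pi$, is acyclic.
\end{itemize}

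\emph{Step 2.} Apply the standard multiplicativity fact (\cite[\S 3]{Milnor1966}). If $\varphi:C\to D$ is a chain homotopy equivalence of based free $\Z[\pi]$-complexes and both $\kk_\rho^m\otimes C$ and $\kk_\rho^m\otimes D$ are acyclic, then
\[
\RT(\kk_\rho^m\otimes D)=\RT(\kk_\rho^m\otimes C)\cdot\det\rho_*(\tau(\varphi)),
\]
where $\tau(\varphi)$ is the Whitehead torsion of $\varphi$ and $\rho_*:K_1(\Z[\pi])\to\kk^\times$ is induced by the determinant. I prove this via the mapping cone: $\operatorname{Cone}(\varphi)$ is contractible, its tensored version is acyclic, and the torsion of a short exact sequence of based acyclic complexes is multiplicative.

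\emph{Step 3.} Each equivalence in the zigzag is simple, so its Whitehead torsion lies in the subgroup generated by $\pm g$. The factor $\det\rho_*(\tau(\varphi))$ therefore lies in $\{\pm\det\rho(g)\}$. Chaining the equalities along the zigzag gives the displayed identity modulo that subgroup.

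For the final clause, I rerun the argument on the filtered pieces. The zigzag restricts to simple equivalences $\mathsf C_A^\pi\simeq_s C_*^{\mathrm{cell}}(\widetilde X_A)$ for each down-set $A$. For an interval, meaning a pair of down-sets $A'\subset A$, I pass to the quotient complexes $\mathsf C^\pi_A/\mathsf C^\pi_{A'}$. These are again based, by the non-exit simplices labelled in $A-A'$. The filtration-preserving maps induce equivalences on these quotients. The five-lemma for chain homotopy equivalences, together with additivity of Whitehead torsion in short exact sequences, shows the induced quotient equivalences are again simple. Steps 1 to 3 then apply verbatim whenever the relevant twisted homology vanishes.

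The main obstacle is this quotient case. One must check that a filtration-preserving simple equivalence induces a simple equivalence on quotients. This needs the sub-equivalences to be simple, which \Cref{thm:equivariant-landfall} supplies, and the bases to be compatible with the filtration, which is true cellwise. Once those hold, the sum formula $\tau(\varphi)=\tau(\varphi_{A'})+\tau(\varphi_{A/A'})$ finishes the argument.
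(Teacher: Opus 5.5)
Your proposal is correct and follows essentially the paper's own route. You tensor the simple equivalences of \Cref{thm:equivariant-landfall} with $\kk_\rho^m$, absorb the simple torsion and the choices of lifts and orientations into $\{\pm\det\rho(g)\}$, and treat interval quotients by additivity of Whitehead torsion over the based short exact sequences. The mapping-cone and five-lemma details you supply are the ones the paper leaves implicit.
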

	
	\begin{proof}
		Tensor the simple chain equivalences of \Cref{thm:equivariant-landfall} with $\kk_\rho^m$. For interval quotients, use additivity of Whitehead torsion for the corresponding based short exact sequences. Changing the chosen lift or orientation of a cell gives exactly the stated indeterminacy.
	\end{proof}
	
	Torsion has previously been extracted from Morse--Bott--Smale geometric complexes and from filtrations associated to Bott integrals \cite{BurgheleaHaller2004,FelSanchezMorgado1999}. Here the based free complex comes directly from bounded-time contacts of the original analytic gradient flow, with no local normal form or transversality assumption.
	
	\section{Connection Matrices}\label{sec:linearisation}
	
	We now extract the standard invariants of Conley theory from the down-set filtration of $X$ and its landfall models: the interval Conley indices, their independence from the chosen blocks, the Morse--Conley spectral sequence and, after homological perturbation, connection matrices. The section ends with the Morse--Bott case, where no transversality is required.
	
	\subsection{Interval Conley Indices}\label{subsec:index-filtration}
	
	Recall the compact positively invariant sets $X_A$ from \Cref{prop:transit-lattice}. A subset $I\subset P$ is an {\bf interval} if $q<_F r<_F p$ with $q,p\in I$ forces $r\in I$. Every interval has the form $I=A-A'$ where
	\[
	A = ~ \downarrow I := \set{p \in P \mid p \leq_F q \text{ for some } q \in I},
	\] and set $A'=A-I$. Write $M(I)$ for the union of the complete orbits whose two limiting critical components belong to $I$.
	
	\begin{proposition}\label{thm:index-filtration}
		For every pair of down-sets $A'\subset A$, the pair $(X_A,X_{A'})$ is an index pair for $M(A-A')$.
	\end{proposition}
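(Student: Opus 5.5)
We verify the three axioms of \Cref{def:index-pair} for the pair $N=X_A$, $E=X_{A'}$ and the set $S=M(I)$, where $I=A-A'$. Both sets are compact by \Cref{prop:transit-lattice}(i), and $X_{A'}\subset X_A$ because $A'\subset A$.

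\emph{Axioms (ii) and (iii).} By \Cref{prop:transit-lattice}(i), both $X_A$ and $X_{A'}$ are positively invariant in $X$. Positive invariance of $X_{A'}$ relative to $X_A$ is then immediate. Axiom (iii) holds vacuously, since no forward orbit ever leaves $X_A$.

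\emph{Axiom (i).} This is the substantive part, and I would handle it by first identifying the maximal invariant sets of the forward saturations. The claim is that
\[
\Inv(X_A)=\bigcup\set{\text{complete orbits with both limits in }\textstyle\bigcup_{p\in A}S_p}.
\]
For ``$\subset$'': take $z$ whose full orbit lies in $X_A$. Its backward limit lies in some $S_r$. Every point of $X_A$ lies in a block $N_p$ or in a transit $\Theta_p$ with $p\in A$. Since $S_r\subset N_r^\circ$, points of the backward orbit close to $S_r$ lie in $N_r^\circ$. Such points cannot lie in a transit space, because transits avoid block interiors by \Cref{prop:uniform-transit}, and the blocks are disjoint. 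Hence they lie in $N_r$ with $r\in A$.

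The forward limit lies in some $S_q$. It satisfies $q\leq_F r$, either because $z$ is critical (so $q=r$) or because $r\leadsto q$. Since $A$ is a down-set, $q\in A$. For ``$\supset$'': a complete orbit with backward limit in $S_p$, $p\in A$, lies in the forward saturation of $N_p$, which is contained in $X_A$.

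Now $\overline{X_A-X_{A'}}$ is contained in $X_A$, so its invariant points are orbits with limits in $A$. Such an orbit cannot have backward limit in $A'$: it would then lie entirely in $X_{A'}$, and it also cannot lie only on $\partial X_{A'}$ from the outside, because the orbit enters the interior of the block $N_{p}^\circ$ for some $p\in A'$. Its forward limit then lies in $I$ as well. If it were in $A'$ instead, the orbit would eventually enter $N_q^\circ\subset X_{A'}$ and remain in $X_{A'}$, while points near its backward limit are interior to $N_r$ with $r\in I$. Here I need the orbit to be a connecting orbit between components of $I$, and convexity of $I$ guarantees that any chain $r\leadsto q$ with $r,q\in I$ stays in $I$. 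This gives $\Inv(\overline{X_A-X_{A'}})\subset M(I)$.

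For the reverse inclusion, an orbit with both limits in $I$ lies in $X_A$. It avoids $X_{A'}$: if it met $X_{A'}$, positive invariance would force its forward limit into $\bigcup_{q\in A'}S_q$, contradicting the limit lying in $I$. So the orbit lies in $X_A-X_{A'}$.

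\emph{Main obstacle.} The step I expect to be hardest is the containment $M(I)\subset(X_A-X_{A'})^\circ$, in particular near the limits $S_p$ with $p\in I$. My first attempt: since $X_{A'}$ is compact and disjoint from $M(I)$, which is itself compact because it is a union of complete orbits together with their limits, it suffices to show that $M(I)$ lies in the interior of $X_A$.

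\begin{itemize}
\item Points of the $S_p$ lie in $N_p^\circ\subset X_A$.
\item Points on connecting orbits are handled by positive invariance. Their backward orbit passes through $N_p^\circ$ at some time $-s$, so the point $\phi_s(N_p^\circ)$ neighbourhood of such a point lies in $X_A$, and this is an open set because $\phi_s$ is a homeomorphism.
\end{itemize}

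This flow-translate argument gives interiority uniformly, and settles axiom (i).
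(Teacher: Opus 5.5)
Your proof is correct and follows essentially the same route as the paper's. Positive invariance of $X_A$ and $X_{A'}$ gives axioms (ii) and (iii). The limiting labels of invariant orbits in $\overline{X_A-X_{A'}}$ are pinned to $I$ because transits avoid block interiors and each $N_r^\circ$ with $r\in A'$ is an open subset of $X_{A'}$, so it misses $\overline{X_A-X_{A'}}$. Isolation comes from flowing the open set $N_p^\circ$ forward, which the paper packages as the single neighbourhood $\phi_T(N_p^\circ-\phi_T^{-1}(X_{A'}))$.

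Two small remarks. Convexity of $I$ is not needed, since membership in $M(I)$ only asks that both limits lie in $I$. Nor is compactness of $M(I)$ needed, since $(X_A-X_{A'})^\circ=X_A^\circ-X_{A'}$ because $X_{A'}$ is closed.
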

	
	\begin{proof}
		Set $I=A-A'$. No transit enters a block interior, so $X_A\cap\Crit(f)=\coprod_{p\in A}S_p$, and the block interiors with labels outside $A$ are disjoint from $X_A$. Those with labels in $A'$ lie inside $X_{A'}$. Thus any complete orbit in $\overline{X_A-X_{A'}}$ has both limiting labels in $I$. Conversely, an orbit with both limits in $I$ belongs to $X_A$ by its backward limit and the saturation formula. It cannot enter $X_{A'}$, since positive invariance and closedness would place its forward limit there. Hence $\Inv(\overline{X_A-X_{A'}})=M(I)$. For $x\in M(I)$, choose $p\in I$ and $T>0$ with $\phi_{-T}(x)\in N_p^\circ$. The open set $N_p^\circ-\phi_T^{-1}(X_{A'})$ contains $\phi_{-T}(x)$, and its image under $\phi_T$ is a neighbourhood of $x$ contained in $X_A-X_{A'}$. This proves the isolation condition. Both spaces are compact and positively invariant, so the relative invariance and exit conditions are automatic.
	\end{proof}
	
	Thus $A\mapsto X_A$ is an index filtration in the sense of \cite{Franzosa1986,RobbinSalamon1992}. The filtered evaluation equivalence identifies all its quotient indices at once.
	
	\begin{theorem}\label{thm:homotopy-conley}
		For nested down-sets $A'\subset A$, evaluation induces a pointed homotopy equivalence
		\[
		B\cL_A/B\cL_{A'}\longrightarrow X_A/X_{A'}\simeq h(M(A-A')).
		\]
		These equivalences respect the cofibre sequences associated to triples of nested down-sets.
	\end{theorem}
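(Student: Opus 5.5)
The plan has three parts: identify the target with the Conley index, produce the quotient equivalence from a map of cofibre sequences, and then check naturality.

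\emph{Target.} By \Cref{thm:index-filtration}, $(X_A,X_{A'})$ is an index pair for $M(A-A')$. The pointed homotopy type of $N/E$ does not depend on the index pair, so $X_A/X_{A'}\simeq h(M(A-A'))$ follows at once.

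\emph{Quotient equivalence.} The main step is to show that $\Ev_A$ induces a homotopy equivalence of quotients. By \Cref{lem:evaluation-well-defined} and \Cref{thm:filtered-classifying}, the evaluation map restricts to maps of pairs $(B\cL_A,B\cL_{A'})\to(X_A,X_{A'})$. Both restrictions $\Ev_A$ and $\Ev_{A'}$ are (simple) homotopy equivalences, and they are strictly compatible with the inclusions, since each is a restriction of the single map $\Ev$.

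We then need both inclusions to be cofibrations.
\begin{itemize}
\item $B\cL_{A'}\hookrightarrow B\cL_A$ is the inclusion of a CW subcomplex, by \Cref{lem:prismatic-cw}, and so is a cofibration.
\item $X_{A'}\hookrightarrow X_A$ is the inclusion of a subpolyhedron, by \Cref{prop:global-triangulation}, and so is a cofibration.
\end{itemize}
For a cofibration, the quotient $B/A$ is homotopy equivalent to the mapping cone of the inclusion. A map of cofibrations whose components on subspace and total space are homotopy equivalences therefore induces a homotopy equivalence of mapping cones. This is the gluing lemma applied to the pushouts $\ast\leftarrow A'\to A$, which are homotopy pushouts because one leg is a cofibration.

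The induced equivalence on quotients is pointed. Pointed versus unpointed equivalence causes no trouble here, since the quotient basepoints are well-pointed: each is the image of a subcomplex. When $A'=\varnothing$ we use the convention $N/\varnothing=N_+$, and the claim reduces to \Cref{thm:filtered-classifying} with a disjoint basepoint added.

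\emph{Naturality.} For a triple $A''\subset A'\subset A$ of down-sets there are cofibre sequences
\[
B\cL_{A'}/B\cL_{A''}\to B\cL_A/B\cL_{A''}\to B\cL_A/B\cL_{A'}
\]
and the analogous sequence for the $X_{\bullet}$. Since all three quotient maps are induced by the same global $\Ev$, the squares between the two sequences commute strictly. The connecting maps to the suspension $\Sigma(B\cL_{A'}/B\cL_{A''})$ are defined through the mapping-cone collapse. That collapse is functorial in strictly commuting maps of cofibration pairs, so the connecting maps commute with the induced equivalences up to homotopy; this step is standard Puppe-sequence naturality.

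The step I expect to be the main obstacle is confirming that the index-pair independence of the homotopy Conley index applies to $(X_A,X_{A'})$ as given. The index pairs in \Cref{def:index-pair} use the closure convention, whereas standard results such as Salamon's typically assume $N,E$ compact with $E$ positively invariant relative to $N$. Here both $X_A$ and $X_{A'}$ are compact and positively invariant in $X$, which should suffice. If a regularity hypothesis is needed, for instance that $E$ be a neighbourhood deformation retract for $N/E$ to have the right homotopy type, this is supplied by the subpolyhedron structure of \Cref{prop:global-triangulation}.
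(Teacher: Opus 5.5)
Your proposal is correct and follows the paper's own route. The paper likewise gets the cofibration pairs from \Cref{lem:prismatic-cw} and \Cref{prop:global-triangulation}, the equivalence on each member from \Cref{thm:filtered-classifying}, the identification with $h(M(A-A'))$ from \Cref{thm:index-filtration}, and compatibility with cofibre sequences from the fact that every map is a restriction of the single global $\Ev$. You add detail the paper leaves implicit (the gluing lemma, pointedness, the case $A'=\varnothing$, Puppe naturality), but the argument is the same.
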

	
	\begin{proof}
		The source and target pairs are cofibration pairs by \Cref{lem:prismatic-cw,prop:global-triangulation}. Evaluation is a homotopy equivalence on each member by \Cref{thm:filtered-classifying}, hence also on the quotients. Apply \Cref{thm:index-filtration}. Compatibility with cofibre sequences follows from naturality of evaluation.
	\end{proof}
	
	For $I=A-A'$, set $\mathsf C(I)=\mathsf C_A/\mathsf C_{A'}$; as a graded module this is $\bigoplus_{p\in I}Q_{p,*}$ with the induced differential, so it does not depend on the choice of nested down-sets with difference $I$. The filtered chain comparison gives $H_*(\mathsf C(I);\Lambda)\cong\Con_*(M(I);\Lambda)$, compatibly with the maps and connecting homomorphisms for nested down-sets. In Franzosa's terminology, this identifies the homology braid of $\mathsf C$ with the homology index braid \cite{Franzosa1989}. The same quotient equivalences give the corresponding statements for any reduced generalised homology theory.
	
	\subsection{Independence of the Blocks}\label{subsec:block-independence}
	
	We now compare two tight block systems for the same flow. Denote them by $N$ and $N'$, and write $X_A(N)$ and $X_A(N')$ for their forward saturations. A map or homotopy between these filtered spaces is called {\bf filtered} if it carries the stage indexed by $A$ into the stage indexed by $A$ for every down-set $A$.
	
	\begin{proposition}\label{prop:block-independence}
		For all sufficiently large $T$, the flow map $\phi_T:X\to X$ defines a filtered homotopy equivalence from $X_\bullet(N)$ to $X_\bullet(N')$. Its filtered homotopy class $\Phi_{N,N'}$ is independent of $T$, and these classes satisfy
		\[
		\Phi_{N,N}=1,\qquad
		\Phi_{N',N''}\Phi_{N,N'}=\Phi_{N,N''}.
		\]
	\end{proposition}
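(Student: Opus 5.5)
We need to show: for large $T$, $\phi_T(X_A(N))\subset X_A(N')$ for every down-set $A$; that $\phi_T$ is a filtered homotopy equivalence; that its class is independent of $T$; and the functoriality identities.

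\emph{Step 1 (filtered map for large $T$).} Fix a down-set $A$. Every $z\in X_A(N)$ has its forward limit in some $S_q$ with $q\in A$: by \Cref{prop:transit-lattice}(i), $X_A(N)$ is compact, positively invariant and meets $\Crit(f)$ only in $\coprod_{p\in A}S_p$. We want a uniform $T_A$ with $\phi_t(X_A(N))\subset X_A(N')$ for all $t\ge T_A$. We argue by contradiction. Suppose there are $z_n\in X_A(N)$ and $t_n\to\infty$ with $\phi_{t_n}(z_n)\notin X_A(N')$. Pass to a subsequence with $z_n\to z\in X_A(N)$ and $\phi_{t_n}(z_n)\to w$. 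The complement of $X_A(N')$ is open, and $X_A(N')$ contains a neighbourhood of $\bigcup_{q\in A}S_q$ (the interiors of the $N'_q$). Hence $w$ lies outside $\bigcup_{q\in A}N_q'^{\circ}$, and so does the whole backward segment $\phi_{[-s,0]}$ of the limit. Since $\phi_{t_n}(z_n)$ lies in the positively invariant set $X_A(N)$, we get $w\in X_A(N)$. Iterating the limit over growing backward windows gives a complete backward orbit through $w$ inside $X_A(N)$, avoiding the $N'_q$-interiors for $q\in A$ over arbitrarily long times. Its backward limit lies in some $S_r$ with $r\in A$, while the backward orbit avoids $N_r'^\circ$, which is a contradiction. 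Taking $T\ge\max_A T_A$ over the finitely many down-sets gives a filtered map.

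\emph{Step 2 (equivalence and $T$-independence).} By symmetry, for large $T'$ the map $\phi_{T'}$ is filtered from $X_\bullet(N')$ to $X_\bullet(N)$. The composites $\phi_{T'}\circ\phi_T=\phi_{T+T'}$ are self-maps of $X_\bullet(N)$. The homotopy $s\mapsto\phi_{s(T+T')}$, $s\in[0,1]$, joins such a composite to the identity, and it is filtered because each $X_A(N)$ is positively invariant. The same holds on the $N'$ side. $T$-independence follows in the same way: for $T_1<T_2$ both large, $s\mapsto\phi_{T_1+s(T_2-T_1)}$ stays filtered because every intermediate time exceeds $\max T_A$. The identity $\Phi_{N,N}=1$ is the homotopy $s\mapsto\phi_{sT}$, and composition follows from $\phi_{T_2}\phi_{T_1}=\phi_{T_1+T_2}$ together with $T$-independence.

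\emph{Main obstacle.} The crux is the compactness argument in Step 1, specifically controlling orbits that spend long times near intermediate critical components $S_r$, $r\in A$, before reaching $N'$. I would replace the limiting argument with a cleaner one. Use \Cref{prop:uniform-transit} for $N'$: any orbit segment avoiding all $N_q'^\circ$ has length less than $T_{G'}$. Then argue that an orbit of $X_A(N)$ entering $N_r'^\circ$ with $r\in A$ already lies in $X_A(N')$, and stays there by positive invariance. Hence by time $T_{G'}$ every point of $X_A(N)$ has entered some $N_r'^\circ$. It remains to show that $r\in A$. Since $X_A(N)$ is positively invariant and closed, its intersection with $N_r'^\circ$ for $r\notin A$ must be avoided. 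This is where tightness of both systems is needed: for $r\notin A$ the block $N'_r$ should be disjoint from $X_A(N)$, and I expect to prove this via \Cref{lem:broken-flow-order}. With that, $T=T_{G'}$ suffices uniformly.
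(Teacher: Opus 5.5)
Your Steps 1 and 2 already give a complete proof, and it is essentially the paper's: obtain a uniform flow time by compactness, then justify every homotopy (inverse, $T$-independence, identity, composition) by positive invariance of the stages $X_A$. The only difference is how the uniform time is found. The paper argues directly: each $x\in X_A(N)$ has forward limit in some $S_p$ with $p\in A$, so a neighbourhood of $x$ enters $(N'_p)^\circ\subset X_A(N')$ at a common time, and a finite subcover finishes. You argue by contradiction through the backward limit of a limit point. Both arguments use the same inputs: $X_A(N)$ is compact and positively invariant with $X_A(N)\cap\Crit(f)=\coprod_{p\in A}S_p$, and $X_A(N')$ is positively invariant.

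In your version, state explicitly that positive invariance of $X_A(N')$ gives $\phi_{t_n-u}(z_n)\notin X_A(N')$ for all $u\in[0,t_n]$. That fact, together with closedness of the complement of $\bigcup_{q\in A}(N'_q)^\circ$, is what places the whole backward orbit of $w$ outside those interiors. Openness of the complement of $X_A(N')$ is not the relevant fact.

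Your \emph{main obstacle} is not an obstacle. Orbits lingering near intermediate components never enter the argument, because the contradiction involves only $\ell_-(w)$. That point lies in $X_A(N)\cap\Crit(f)$, hence in some $S_r\subset(N'_r)^\circ$ with $r\in A$.

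Do not replace Step 1 by the sketched alternative, which has a genuine gap. Its key claim is that $N'_r\cap X_A(N)=\varnothing$ for $r\notin A$. This does not follow from closedness and positive invariance of $X_A(N)$: nothing in those properties stops an orbit from $N_p$, $p\in A$, from entering $(N'_r)^\circ$ and leaving again through $E'_r$. The paper proves such disjointness only within a single block system (see \Cref{thm:index-filtration}), using its contact theory. For two different systems, \Cref{lem:broken-flow-order} yields it only after shrinking both systems below a new flow-dependent radius. That is a cross-system tightness hypothesis which the paper's notion of tightness does not supply, and which neither the paper's proof nor your Step 1 needs.
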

	
	\begin{proof}
		Fix a down-set $A$. Every $x\in X_A(N)$ has forward limit in some $S_p$ with $p\in A$, since $X_A(N)$ is closed and positively invariant and its critical points have labels in $A$. Hence $\phi_{t_x}(x)\in(N'_p)^\circ$ for some $t_x\geq0$. By continuity, a neighbourhood $U_x$ of $x$ also enters $(N'_p)^\circ$ at time $t_x$. Positive invariance of $X_A(N')$ then gives $\phi_t(U_x\cap X_A(N))\subset X_A(N')$ for every $t\geq t_x$. A finite subcover of the compact set $X_A(N)$ gives a uniform time $T_A$. There are finitely many down-sets, so one time $T$ works for all of them. Interchanging the systems gives a filtered map $\phi_{T'}$ in the reverse direction. Both composites are $\phi_{T+T'}$, and the homotopies $\phi_{s(T+T')}$, $0\leq s\leq1$, remain in the appropriate source stages by positive invariance. Thus the two maps are filtered homotopy inverses. Interpolating between any two sufficiently large times gives a filtered homotopy between the comparison maps. The identity assertion follows from positive invariance, and the composition assertion follows from addition of flow times.
	\end{proof}
	
	Together with evaluation, these comparisons give coherent isomorphisms between the down-set diagrams $B\cL_\bullet(N)$ in the category of down-set diagrams localised at objectwise homotopy equivalences. They induce the identity on $H_*(X)$, since every flow map is homotopic to the identity. This applies the flow-defined comparison of Conley theory \cite{Salamon1985} simultaneously to all down-sets, without comparing individual object or morphism spaces.
	
	\subsection{The Morse--Conley Spectral Sequence}\label{subsec:spectral-sequence}
	
	Fix a function $\ell:P\to\{0,\ldots,m\}$ such that $q<_F p$ implies $\ell(q)<\ell(p)$. The sets $A_i=\{p\mid\ell(p)\leq i\}$ are down-sets, giving a finite filtration $\varnothing=X_{A_{-1}}\subset X_{A_0}\subset\cdots\subset X_{A_m}=X$.
	
	\begin{theorem}\label{thm:spectral-sequence}
		For every commutative ring $\Lambda$, this filtration has a homological spectral sequence
		\begin{equation}\label{eq:spectral-sequence}
			E^1_{i,j}\cong\bigoplus_{\ell(p)=i}\Con_{i+j}(S_p;\Lambda)
			\quad\Longrightarrow\quad H_{i+j}(X;\Lambda),
		\end{equation}
		with $d^r:E^r_{i,j}\to E^r_{i-r,j+r-1}$. For fixed $\ell$, its pages and differentials from $E^1$ onward are canonically independent of the tight block system and flow-subordinate triangulations. The spectral sequences of the filtered nerve and the landfall complex are naturally isomorphic to it from $E^1$ onward. In the landfall model, $d^1$ is induced by the part of $L\beta$ which lowers $\ell$ by one.
	\end{theorem}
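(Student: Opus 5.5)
The plan is to build the spectral sequence directly from the filtered landfall complex and then transport it along the filtered comparisons already available. Filter $\mathsf C\otimes\Lambda$ by $F_i=\mathsf C_{A_i}\otimes\Lambda$. This is a finite, exhaustive filtration by subcomplexes (\Cref{def:landfall-complex}), so the standard spectral sequence of a filtered complex exists. It has $E^0_{i,*}=F_i/F_{i-1}$, $d^r$ of bidegree $(-r,r-1)$, and it converges to $H_*(\mathsf C;\Lambda)\cong H_*(X;\Lambda)$ by \Cref{cor:filtered-chains}.

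To identify $E^1$, first note that $F_i/F_{i-1}=\mathsf C(I_i)$ with $I_i=\ell^{-1}(i)$. Because $\ell$ is strictly order-preserving, $I_i$ is an antichain, so no $L$-component joins two labels of $I_i$. The induced differential on $E^0_i$ is therefore the diagonal $d=\bigoplus_{\ell(p)=i}d_p$. This gives
\[
E^1_{i,j}=\bigoplus_{\ell(p)=i}H_{i+j}(N_p,E_p;\Lambda)=\bigoplus_{\ell(p)=i}\Con_{i+j}(S_p;\Lambda),
\]
using \Cref{lem:block-properties}(iv). The $d^1$ is the connecting map of $F_i/F_{i-2}$. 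On a cycle of $F_i/F_{i-1}$ it is computed by applying $d+L\beta$ and projecting to level $i-1$. Only the part of $L\beta$ lowering $\ell$ by exactly one survives, which proves the last claim.

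For the comparisons, \Cref{cor:filtered-chains} supplies zigzags of chain maps natural over down-sets. These are filtered maps, and each is a quasi-isomorphism on every stage $A_i$. The five lemma then makes them quasi-isomorphisms on the graded pieces $F_i/F_{i-1}$. A filtered map that induces an isomorphism on $E^1$ induces isomorphisms on all later pages, so the spectral sequences of the landfall complex, of $C^{\mathrm{cell}}_*(B\cL_\bullet)$ and of $\Sing_*(X_\bullet)$ agree from $E^1$ onward. The first of these comparisons rests on the Morse contraction preserving down-sets (\Cref{thm:landfall-complex}), and the last on \Cref{thm:filtered-classifying}.

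For independence, I would argue through the singular model $\Sing_*(X_{A_\bullet}(N);\Lambda)$. By \Cref{prop:block-independence}, $\phi_T$ is a filtered homotopy equivalence $X_\bullet(N)\to X_\bullet(N')$. It is therefore a filtered chain homotopy equivalence of singular complexes and induces isomorphisms from $E^1$ on. These isomorphisms are canonical because the filtered homotopy class $\Phi_{N,N'}$ does not depend on $T$ and the classes compose correctly. Changing the triangulation only changes the cellular/Morse side of the zigzag, which always lands in the same singular filtered complex, so it has no effect. On $E^1$ the comparison should be the identity under the Conley-index identifications, by flow-invariance of the index (\Cref{thm:homotopy-conley}).

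The main obstacle I expect is canonicity, specifically checking that the isomorphism is independent of choices even though the comparison zigzags involve cellular approximation and Morse contractions. I would handle this by showing that every choice is filtered-homotopic to every other. Filtered homotopic maps induce the same map from $E^1$ on, since a filtered homotopy shifts filtration by zero and so acts trivially on $E^1$. This holds even though such maps may differ on $E^0$, which is exactly why the statement only claims agreement from $E^1$ onward.
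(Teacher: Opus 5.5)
Your proposal is correct and takes essentially the same route as the paper. Both filter the landfall and singular complexes by the down-sets $A_i$ and transport along the filtered zigzag of \Cref{cor:filtered-chains} to identify the spectral sequences from $E^1$ onward. Both read off $E^1$ and $d^1$ from the fact that every term of $L\beta$ strictly lowers $\ell$, and both get canonical block independence from the filtered flow comparisons of \Cref{prop:block-independence}.

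The differences are cosmetic. You build the spectral sequence on the landfall side rather than the singular side. You also spell out the five-lemma step and the fact that filtered homotopies act trivially on $E^1$, both of which the paper leaves implicit.
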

	
	\begin{proof}
		Filter $\Sing_*(X;\Lambda)$ by $\Sing_*(X_{A_i};\Lambda)$ and the landfall complex by $\mathsf C_{A_i}\otimes\Lambda$. The comparisons of \Cref{cor:filtered-chains} induce isomorphisms on the homology of successive quotients, and therefore identify the spectral sequences from $E^1$ onward. On the landfall side, each successive quotient is the direct sum of $(C_*(N_p,E_p;\Lambda),d_p)$ over $\ell(p)=i$: every term of $L\beta$ strictly lowers $\ell$. This gives \eqref{eq:spectral-sequence} and the description of $d^1$. The filtration is finite, so $E^\infty_{i,j}$ is the $i$th associated graded group of $H_{i+j}(X;\Lambda)$ with its induced filtration. 	
		The flow comparisons in \Cref{prop:block-independence} are filtered homotopy equivalences and hence induce spectral-sequence isomorphisms from $E^1$ onward. Their time independence and composition law give the asserted canonical identifications. The comparison with singular chains removes the triangulation choice as well.
	\end{proof}
	
	This is the classical Morse--Conley spectral sequence; existence and index-filtration independence are treated in \cite[Theorems~7.1--7.2]{Bartlomiejczyk2005}. The landfall construction supplies a finite chain model with differential $d+L\beta$. Neither field coefficients nor torsion-free local homology are needed.
	
	\begin{remark}\label{rem:level-choice}
		A linear extension $p_0,\ldots,p_m$ of $P$ gives $\ell(p_i)=i$, so each summand of the first page is a single local Conley group. Changing the extension can change the page on which a differential appears. A choice depending only on $P$ is the height function
		\[
		\ell(p)=\max\{k\mid p_0<_F\cdots<_F p_k=p\}.
		\]
		Alternatively, the position of $c_p$ among the distinct critical values gives a canonical choice for the given $f$, grouping components at the same value. Block independence holds for each fixed choice of $\ell$.
	\end{remark}
	
	\subsection{Connection Matrices}\label{subsec:connection-matrices}
	
	Work over a commutative ring $\Lambda$. Write $Q_p^\Lambda=C_*(N_p,E_p;\Lambda)$ and $Q_*^\Lambda=\bigoplus_p Q_p^\Lambda$. By \Cref{lem:block-properties}, the homology of $Q_p^\Lambda$ is $\Con_*(S_p;\Lambda)$. The landfall complex $\mathsf C^\Lambda=\mathsf C\otimes\Lambda$ has differential $d+\Delta$, where $d=\bigoplus_p d_p$ and $\Delta=L\beta$ strictly lowers the flow order. We now transfer this perturbation to local Conley homology.
	
	\begin{definition}\label{def:contraction}
		A {\bf normalised contraction} of $(C,d)$ onto $(V,d_V)$ consists of chain maps $\jmath:V\to C$, $\pi:C\to V$ and a degree $+1$ map $h$ satisfying
		\[
		\pi\jmath=1,\qquad dh+hd=\jmath\pi-1,
		\qquad h^2=h\jmath=\pi h=0.
		\]
	\end{definition}
	
	\begin{proposition}\label{prop:contractions}
		Every bounded complex of finite-dimensional vector spaces over a field admits a normalised contraction onto its homology with zero differential. A bounded complex of finitely generated free abelian groups admits such a contraction over $\Z$ if and only if all its homology groups are free abelian.
	\end{proposition}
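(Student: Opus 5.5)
The plan is to build the contraction degree by degree from a splitting of cycles and boundaries. Write $Z_n=\ker d_n$ and $B_n=\operatorname{im} d_{n+1}$. The key requirement is that both short exact sequences
\[
0\to Z_n\to C_n\xrightarrow{d_n} B_{n-1}\to 0
\quad\text{and}\quad
0\to B_n\to Z_n\to H_n\to 0
\]
split. Given such splittings, we decompose $C_n=B_n\oplus H_n'\oplus B_{n-1}'$, where $H_n'\cong H_n$ is a complement of $B_n$ in $Z_n$ and $B_{n-1}'$ is a complement of $Z_n$ that $d_n$ maps isomorphically onto $B_{n-1}$.

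With this decomposition the maps are defined componentwise. The map $\jmath$ is the inclusion of $H_n'$, and $\pi$ is the projection onto $H_n'$, identified with $H_n$. Both are chain maps for the zero differential on $H_*$, because $d$ kills $H_n'$ and $\pi$ vanishes on boundaries. The homotopy $h$ vanishes on $H_n'\oplus B_{n-1}'$, and on $B_n$ it equals $-(d_{n+1}|_{B_n'})^{-1}$, landing in $B_n'\subset C_{n+1}$.

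The identities are then checked summand by summand.
\begin{itemize}
\item On $H_n'$, both sides of $dh+hd=\jmath\pi-1$ vanish.
\item On $B_n$, we have $hd=0$ and $dh=-1$.
\item On $B_{n-1}'$, we have $dh=0$ and $hd=-1$, since $d$ maps $B_{n-1}'$ isomorphically onto $B_{n-1}$ and $h$ sends $B_{n-1}$ back into $B_{n-1}'$ via the inverse.
\end{itemize}
The side conditions $h^2=0$, $h\jmath=0$ and $\pi h=0$ hold because $h$ maps into $B'$, on which $h$ vanishes, and because $\operatorname{im}h\subset B'$ is disjoint from $H'$. Over a field every subspace has a complement, which settles the first assertion; boundedness and finite-dimensionality are not even needed there.

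Over $\Z$ with finitely generated free chain groups, $B_{n-1}\subset C_{n-1}$ is a subgroup of a free group and hence free, so the first sequence always splits. The second sequence splits exactly when $H_n$ is free: if $H_n$ is free, projectivity gives the splitting, and subgroups of finitely generated free groups are free, so $Z_n$ and $H_n'$ are free. This proves the forward ("if") direction. For the converse, suppose any normalised contraction onto $(H_*,0)$ exists. Then $\pi\jmath=1$ exhibits $H_n$ as a direct summand of the free module $C_n$, so $H_n$ is projective. Finitely generated projective abelian groups are free, so $H_n$ is free abelian.

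The only step requiring care is the converse: one must notice that the target of the contraction is the homology itself with zero differential, so that $\pi\jmath=1$ yields a retraction. Everything else is routine linear algebra.
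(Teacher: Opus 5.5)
Your proof is correct and follows the paper's argument: the same splitting $C_n=B_n\oplus H_n'\oplus B_{n-1}'$ with $-h$ inverting $d$ on the boundary summand, the same two split short exact sequences over $\Z$ (the first splitting automatically because $B_{n-1}$ is free), and the same converse, in which $\pi\jmath=1$ makes each $H_n$ a direct summand of a free abelian group. Your summand-by-summand verification of $dh+hd=\jmath\pi-1$ and of the side conditions $h^2=h\jmath=\pi h=0$ supplies details that the paper leaves implicit.
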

	
	\begin{proof}
		Over a field, choose splittings $C_n=B_n\oplus H_n\oplus T_n$ with $d:T_n\to B_{n-1}$ an isomorphism. Let $\jmath$ include $H_n$, let $\pi$ project to it, and let $-h$ invert $d$ on the boundary summand and vanish elsewhere. Over $\Z$ the same construction works when the homology groups are free: the sequences $0\to B_n\to Z_n\to H_n\to0$ then split, as do $0\to Z_n\to C_n\to B_{n-1}\to0$, since every $B_{n-1}$ is free. Conversely, a contraction makes each homology group a direct summand of a free abelian chain group, so it must be free.
	\end{proof}
	
	\begin{definition}\label{def:connection-matrix}
		A {\bf connection matrix} is a strictly flow-order-lowering differential $D$ of degree $-1$ on $V=\bigoplus_{p\in P}\Con_*(S_p;\Lambda)$ whose homology braid \cite[\S 3]{Franzosa1989} is naturally isomorphic to the homology index braid of the flow-ordered Morse decomposition.
	\end{definition}
	
	\noindent This is the $P$-graded form of Franzosa's definition \cite{Franzosa1989}; compare \cite{HarkerMischaikowSpendlove2021}.
	
	\begin{theorem}\label{thm:connection-matrix}
		Suppose that a normalised contraction $(\jmath_p,\pi_p,h_p)$ of $Q_p^\Lambda$ onto $V_p=H_*(Q_p^\Lambda)$ with zero differential has been chosen for every $p$. Write $\jmath,\pi,h$ for their direct sums. Then
		\begin{equation}\label{eq:connection}
			D=\pi\Delta(1-h\Delta)^{-1}\jmath
		\end{equation}
		is a connection matrix. Its $(q,p)$ entry is the sum over all chains $(p=p_0>_F p_1>_F\cdots>_F p_r=q)$ of the terms
		\begin{equation}\label{eq:itinerary}
			\pi_q~\left(\Delta_{p_rp_{r-1}}~h_{p_{r-1}}~
			\Delta_{p_{r-1}p_{r-2}}~\cdots
			h_{p_1}~\Delta_{p_1p_0}\right)~\jmath_p,
		\end{equation}
		where $\Delta_{qp}=L_{qp}\beta_p$ and the term for $r=1$ is $\pi_q\Delta_{qp}\jmath_p$.
	\end{theorem}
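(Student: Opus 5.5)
The plan is to apply the homological perturbation lemma to the landfall complex $\mathsf C^\Lambda=(Q^\Lambda_*,d+\Delta)$, regarded as a perturbation of the block-diagonal complex $(Q^\Lambda_*,d)$. Summing the local contractions gives a normalised contraction $(\jmath,\pi,h)$ of $(Q^\Lambda,d)$ onto $(V,0)$. First I check that $h\Delta$ is nilpotent. Here $h$ preserves the block label and $\Delta=L\beta$ strictly lowers the flow order, so any product containing $|P|$ factors of $h\Delta$ vanishes. Thus $(1-h\Delta)^{-1}=\sum_{r\ge0}(h\Delta)^r$ is a finite sum.

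The perturbation lemma in the form of \cite{Skoldberg2006} then yields the following data:
\begin{itemize}
\item a differential $D=\pi\Delta(1-h\Delta)^{-1}\jmath$ on $V$;
\item perturbed maps $\jmath'=(1-h\Delta)^{-1}\jmath$ and $\pi'=\pi(1-\Delta h)^{-1}$;
\item a perturbed homotopy $h'=(1-h\Delta)^{-1}h$.
\end{itemize}
Together these form a new normalised contraction of $(Q^\Lambda,d+\Delta)$ onto $(V,D)$. In particular $D^2=0$ and $(V,D)\simeq\mathsf C^\Lambda$. I take the identity $(d+\Delta)^2=0$ from \Cref{thm:landfall-complex}.

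Expanding the geometric series and reading off the $(q,p)$ block gives the itinerary formula \eqref{eq:itinerary}. Each $\Delta$ strictly lowers the order and each $h$ preserves the label, so a nonzero term with $r$ factors of $\Delta$ passes through a strictly descending chain $p=p_0>_F\cdots>_F p_r=q$. The case $r=1$ is $\pi_q\Delta_{qp}\jmath_p$. The same bookkeeping shows that $D$ strictly lowers the flow order, and it has degree $-1$ because $\Delta$ does and $h$ raises degree while $\Delta$ lowers it.

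The substantive step, and the one I expect to be the main obstacle, is matching the homology braid of $(V,D)$ with the homology index braid. My approach is as follows.
\begin{itemize}
\item All the maps $\jmath,\pi,h,\Delta$, and therefore $\jmath',\pi',h'$, are \emph{filtered}: they carry $\bigoplus_{p\in A}$ into itself for every down-set $A$, since each either preserves labels or lowers them. Hence for every down-set $A$ the perturbed contraction restricts to a contraction of $\mathsf C^\Lambda_A$ onto $(V_A,D|_{V_A})$.
\item Because the restrictions are compatible with inclusions, they descend to interval quotients $\mathsf C(I)\to V(I)$ for $I=A-A'$. They give chain homotopy equivalences that commute with the short exact sequences attached to nested triples of down-sets.
\item Consequently the induced homology isomorphisms commute with the inclusion-induced maps and with the connecting homomorphisms. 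This is the naturality of long exact sequences under a map of short exact sequences of complexes.
\end{itemize}

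Composing with the identification $H_*(\mathsf C(I);\Lambda)\cong\Con_*(M(I);\Lambda)$, compatible with the braid maps, from \Cref{cor:filtered-chains} and \Cref{thm:homotopy-conley}, the homology braid of $(V,D)$ is naturally isomorphic to the homology index braid. The care needed lies in one check: Franzosa's braid is indexed by intervals and maps among them, not just down-sets. I would verify that every interval pair arises as a quotient of nested down-sets, and that the comparisons are independent of the representing down-sets $A'\subset A$. This holds because $\mathsf C(I)$ and $V(I)$ depend only on $I$ as graded modules with induced differential.
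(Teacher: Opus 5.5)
Your proposal is correct and follows essentially the same route as the paper. The paper likewise uses the nilpotency of $h\Delta$ to apply the finite perturbation lemma, observes that the perturbed maps and homotopy preserve down-sets and descend to interval quotients, identifies the resulting braid with the homology index braid via \Cref{thm:homotopy-conley}, and expands the geometric series. You additionally spell out bookkeeping that the paper leaves implicit: the explicit perturbed maps, strict order-lowering and degree of $D$, and the representation of interval pairs by nested down-sets.
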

	
	\begin{proof}
		Every factor of $\Delta$ strictly lowers the finite flow order, whereas $h$ preserves block labels. Hence $h\Delta$ is nilpotent. The finite homological perturbation lemma \cite{Gugenheim1972,GugenheimLambeStasheff1991,BarnesLambe1991} contracts $(Q_*^\Lambda,d+\Delta)$ onto $(V,D)$ with $D$ as in \eqref{eq:connection}. All perturbed maps and homotopies preserve down-sets and descend to interval quotients. By \Cref{thm:homotopy-conley}, the resulting homology braid is therefore the homology index braid, so $D$ is a connection matrix. Expanding the finite geometric series gives \eqref{eq:itinerary}. The same filtered contraction identifies the spectral sequence of $(V,D)$ with \eqref{eq:spectral-sequence} from $E^1$ onward.
	\end{proof}
	
	We call $(V,D)$ the {\bf connection complex} of the chosen block system, triangulations and local contractions.
	
	\begin{corollary}\label{cor:coefficients}
		Over every field $\kk$, the landfall construction produces a connection matrix. Over $\Z$, the finite free landfall complex exists without additional hypotheses, while the local contractions onto homology exist if and only if every local homological Conley group is free abelian.
	\end{corollary}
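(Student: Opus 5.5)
The corollary assembles three earlier results, so the plan is to split it into its field assertion, its integral existence assertion, and its integral contraction criterion.

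\emph{Field coefficients.} Take $\Lambda=\kk$. Each $Q_p^\kk=C_*(N_p,E_p;\kk)$ is a bounded complex of finite-dimensional vector spaces, because $\cK_p$ is a finite triangulation by \Cref{lem:triangulations}. \Cref{prop:contractions} therefore supplies a normalised contraction $(\jmath_p,\pi_p,h_p)$ of $Q_p^\kk$ onto $V_p=H_*(Q_p^\kk)$ with zero differential. By \Cref{lem:block-properties}(iv), $(N_p,E_p)$ is an index pair, so $V_p=\Con_*(S_p;\kk)$. With these contractions the hypotheses of \Cref{thm:connection-matrix} hold, and the operator $D=\pi\Delta(1-h\Delta)^{-1}\jmath$ is a connection matrix.

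\emph{Integral landfall complex.} Over $\Z$, no hypotheses are needed for the complex itself. \Cref{def:landfall-complex} and \Cref{thm:landfall-complex} construct $\mathsf C=(Q_*,d+L\beta)$ from the finite triangulations alone. Its chain groups are finitely generated free abelian, with bases given by the non-exit simplices. \Cref{cor:filtered-chains} identifies it, filtered by down-sets, with singular chains.

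\emph{Integral contractions.} It remains to show that local contractions over $\Z$ exist exactly when every $\Con_*(S_p;\Z)$ is free.
\begin{itemize}
\item Each $Q_p=C_*(N_p,E_p;\Z)$ is a bounded complex of finitely generated free abelian groups.
\item By the second half of \Cref{prop:contractions}, $Q_p$ admits a normalised contraction onto its homology if and only if $H_*(Q_p)$ is free.
\item By \Cref{lem:block-properties}(iv), $H_*(Q_p)=\Con_*(S_p;\Z)$.
\end{itemize}
Combining these over all $p$ gives the stated equivalence. When the groups are free, \Cref{thm:connection-matrix} with $\Lambda=\Z$ again yields an integral connection matrix.

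The one point that needs care is the meaning of "onto homology" in the converse direction of the integral claim. The contraction in question is onto $V_p=H_*(Q_p)$ with zero differential, as in \Cref{thm:connection-matrix}. Under that reading the converse is exactly the direct-summand argument of \Cref{prop:contractions}: a contraction exhibits each homology group as a direct summand of a free abelian chain group, so it must be free.

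Apart from this, the argument only assembles earlier results.
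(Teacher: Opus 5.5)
Your proposal is correct and takes the same route as the paper, whose proof is simply ``Apply \Cref{prop:contractions},'' implicitly combined with \Cref{thm:connection-matrix} and the identification $H_*(N_p,E_p)=\Con_*(S_p)$ from \Cref{lem:block-properties}(iv). You have spelled out those implicit steps in more detail, including why the integral landfall complex needs no extra hypotheses.
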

	
	\begin{proof}
		Apply \Cref{prop:contractions}.
	\end{proof}
	
	\begin{corollary}\label{cor:adjacent}
		Under the hypotheses of \Cref{thm:connection-matrix}, if $q<_F p$ is a covering relation, then $D_{qp}=\pi_qL_{qp}\beta_p\jmath_p$. This gives the attractor--repeller connecting homomorphism
		\[
		\Con_n(S_p;\Lambda)\longrightarrow\Con_{n-1}(S_q;\Lambda),
		\]
		with the usual homological convention for a short exact sequence.
	\end{corollary}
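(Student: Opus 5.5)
The first claim, $D_{qp}=\pi_qL_{qp}\beta_p\jmath_p$, should fall directly out of the itinerary expansion \eqref{eq:itinerary} in \Cref{thm:connection-matrix}. The $(q,p)$ entry of $D$ is a sum over chains $p=p_0>_F p_1>_F\cdots>_F p_r=q$. Because $q<_F p$ is a covering relation, no $r\in P$ satisfies $q<_F r<_F p$, so the only such chain is the one with $r=1$. Every longer chain would need an intermediate label strictly between $q$ and $p$. Hence only the $r=1$ term $\pi_q\Delta_{qp}\jmath_p$ survives, and substituting $\Delta_{qp}=L_{qp}\beta_p$ gives the displayed formula. It is also worth noting that the $(q,p)$ block of $\Delta$ only involves the $p$- and $q$-components, since $\Delta$ maps $Q_p$ into $\bigoplus_{q<_F p}Q_q$.

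For the identification with the connecting homomorphism, I would work with the interval $I=\{p,q\}$. Because the relation is a covering, $I$ is an interval in the sense of \Cref{subsec:index-filtration}. Take $A=\downarrow I$ and $A'=A-I$. The down-sets $A''=A'\cup\{q\}$ and $A$ then give a triple $A'\subset A''\subset A$, with $A''-A'=\{q\}$ and $A-A''=\{p\}$. This yields a short exact sequence of quotient complexes
\[
0\to \mathsf C(\{q\})\to\mathsf C(I)\to\mathsf C(\{p\})\to 0.
\]
As a graded module, $\mathsf C(I)$ is $Q_q\oplus Q_p$ with differential
\[
\begin{pmatrix} d_q & \Delta_{qp}\\ 0 & d_p\end{pmatrix}.
\]
By the discussion following \Cref{thm:homotopy-conley}, the homology of this sequence is the attractor--repeller sequence for $(S_q,S_p)$ inside $M(I)$, compatibly with its connecting maps. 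Here I use that $H_*(\mathsf C(\{p\}))=\Con_*(S_p)$ and that $M(\{p\})=S_p$.

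Next I would compute the connecting homomorphism of a split extension of this kind. Take a class in $H_n(Q_p)$ and represent it by the cycle $\jmath_p v$. This lifts to $(0,\jmath_p v)$ in $\mathsf C(I)$, and the perturbed differential sends the lift to $(\Delta_{qp}\jmath_p v,0)$. That element is a $d_q$-cycle, because the square of the perturbed differential vanishes. Applying $\pi_q$ gives its class in $V_q$. So $\delta=\pi_q\Delta_{qp}\jmath_p$, which matches $D_{qp}$ under the usual sign convention.

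The main point requiring care is that the homological perturbation contraction of \Cref{thm:connection-matrix} should descend to the interval quotient $\mathsf C(I)$. This is needed so that the braid isomorphism really matches $D_{qp}$ with this $\delta$, and not with some conjugate of it. I would argue this from the filtered nature of the perturbed maps noted in that proof. A simpler alternative is the direct computation above, which needs only that $\jmath_p$ and $\pi_q$ induce the identity on homology. That holds because $\pi\jmath=1$ and $\jmath\pi\simeq 1$.
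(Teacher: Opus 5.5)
Your proposal is correct and follows the paper's own argument. The covering relation leaves only the $r=1$ itinerary, and the connecting map of $0\to Q_q\to\mathsf C(\{q,p\})\to Q_p\to 0$, whose off-diagonal boundary is $\Delta_{qp}=L_{qp}\beta_p$, is read off through the local contractions as $\pi_q\Delta_{qp}\jmath_p$ and matched with the attractor--repeller map via \Cref{thm:homotopy-conley}. One wording fix: $\pi\jmath=1$ and $\jmath\pi\simeq1$ make $H(\jmath)$ and $H(\pi)$ mutually inverse isomorphisms, not literal identities on $V=H_*(Q)$, and $D_{qp}$ is the connecting homomorphism exactly under these contraction-induced identifications, which are also the ones the perturbed contraction uses on each local piece.
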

	
	\begin{proof}
		There is no intermediate flow index, so only the first-order term of \eqref{eq:connection} survives. The short exact sequence
		\[
		0\longrightarrow Q_q^\Lambda
		\longrightarrow\mathsf C^\Lambda(\{q,p\})
		\longrightarrow Q_p^\Lambda\longrightarrow0
		\]
		has off-diagonal boundary $\Delta_{qp}=L_{qp}\beta_p$. Its connecting map is induced by this boundary, and the local contractions identify it with $\pi_q\Delta_{qp}\jmath_p$. By \Cref{thm:homotopy-conley}, this is the connecting map of the attractor--repeller index triple.
	\end{proof}
	
	\begin{remark}
		Over $\R$ or $\C$, give the relative simplex basis its Euclidean or Hermitian inner product. Eckmann's finite-dimensional Hodge decomposition \cite{Eckmann1944} then supplies the local contractions: take $\jmath_p$ to include harmonic representatives, $\pi_p$ to be harmonic projection and $h_p=-d_p^*G_p$, where $G_p$ is the inverse of the Laplacian on the orthogonal complement of the harmonic chains. The minus sign agrees with \Cref{def:contraction}.
	\end{remark}
	
	\begin{proposition}\label{prop:gauge}
		Suppose two families of local contractions induce the identity on local homology and produce differentials $D,D'$ on the same graded module $V$. Then there is a chain isomorphism $U:(V,D)\to(V,D')$ of the form $U=1+\Xi$, with $\Xi$ strictly order-lowering. In particular, $D'=UDU^{-1}$.
	\end{proposition}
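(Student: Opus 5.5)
The proof compares the two perturbed contractions of the same filtered complex $(Q^\Lambda_*,d+\Delta)$ and shows that the resulting comparison map is unitriangular with respect to the flow order.

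\textbf{Step 1: the perturbed contractions.} Let $(\jmath,\pi,h)$ and $(\jmath',\pi',h')$ be the two direct sums of local contractions. The finite homological perturbation lemma, applied as in the proof of \Cref{thm:connection-matrix}, yields two contractions of $(Q^\Lambda_*,d+\Delta)$. One is onto $(V,D)$, with chain maps
\[
J=(1-h\Delta)^{-1}\jmath, \qquad \Pi=\pi(1-\Delta h)^{-1}.
\]
The other is onto $(V,D')$, with chain maps $J'$ and $\Pi'$ given by the analogous formulas. Set $U:=\Pi' J:(V,D)\to(V,D')$. This is a composite of chain maps, hence a chain map.

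\textbf{Step 2: $U$ is unitriangular.} Expand both geometric series. The $P$-diagonal part of $U$ is $\pi'\jmath$, that is, $\bigoplus_p \pi'_p\jmath_p$. Every other term contains at least one factor of $\Delta$, and every such term strictly lowers the flow order, because $h$, $h'$, $\jmath$, $\pi'$ preserve block labels while $\Delta$ strictly lowers them. Now $\pi'_p\jmath_p$ is the map induced on local homology $H_*(Q_p^\Lambda)=V_p$ by the chain map $\pi'_p\circ\jmath_p$. Since $\jmath_p$ induces the identification of $V_p$ with homology (as does $\pi'_p$), and by hypothesis both families induce the identity on local homology, we get $\pi'_p\jmath_p=1$. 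Concretely, $\pi'_p\jmath_p$ is a map between complexes with zero differential, so it equals its induced map on homology, which is the identity. Hence $U=1+\Xi$ with $\Xi$ strictly order-lowering.

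\textbf{Step 3: invertibility and conjugation.} Since $P$ is finite, $\Xi$ is nilpotent. Therefore $U^{-1}=\sum_k(-\Xi)^k$ exists, is again of the form $1+(\text{strictly lowering})$, and $U$ is an isomorphism of graded modules. The chain map identity $D'U=UD$ then gives $D'=UDU^{-1}$.

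The main obstacle is the diagonal identification in Step 2. One must check that "inducing the identity on local homology" really forces $\pi'_p\jmath_p=1$ on the nose. The argument is that $\pi'_p\jmath_p$ is a chain map between zero-differential complexes, so it coincides with its homology map. One must also confirm that no term of the expansion other than the $\Delta$-free one can contribute to the diagonal. This follows from strictness of the order-lowering of $\Delta$ (\Cref{thm:landfall-complex}).
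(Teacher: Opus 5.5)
Your proposal is correct and follows the same route as the paper's proof. You take $U$ to be the perturbed projection of the second contraction composed with the perturbed inclusion of the first, note that its diagonal part $\pi'\jmath$ is the identity while every other term contains a factor of $\Delta$ and so strictly lowers the order, and then invert $U$ by a finite geometric series. Your explicit check that $\pi'_p\jmath_p=1$, as a map of zero-differential complexes equal to its induced map on homology, spells out a step the paper leaves implicit.
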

	
	\begin{proof}
		Let $\jmath_\Delta$ be the perturbed inclusion for the first contraction and $\pi'_\Delta$ the perturbed projection for the second. Their composite $U=\pi'_\Delta\jmath_\Delta$ is a chain map. Its diagonal part is the identity, and every correction term contains a factor of $\Delta$, so $U=1+\Xi$ with $\Xi$ strictly order-lowering. Since $P$ is finite, $\Xi$ is nilpotent and $U^{-1}=1-\Xi+\Xi^2-\cdots$ is a finite sum.
	\end{proof}
	
	\begin{corollary}\label{cor:field-uniqueness}
		Over a field, any two landfall connection matrices for the Morse decomposition \eqref{eq:morse-decomposition} are filtered chain isomorphic, even when they are built from different tight block systems, flow-subordinate triangulations and local contractions.
	\end{corollary}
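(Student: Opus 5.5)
The plan is to reduce the comparison to a single filtered chain-level statement and then apply \Cref{prop:gauge} in a generalised form. Let the two data sets be $(N,\cK,(\jmath,\pi,h))$ and $(N',\cK',(\jmath',\pi',h'))$, with landfall complexes $\mathsf C^\kk$ and $\mathsf C'^\kk$ and connection complexes $(V,D)$ and $(V',D')$. Both $V$ and $V'$ are identified with $\bigoplus_p\Con_*(S_p;\kk)$: by \Cref{lem:block-properties}, $(N_p,E_p)$ and $(N'_p,E'_p)$ are index pairs for $S_p$, and we use the canonical Conley isomorphism between them. The goal is a filtered chain isomorphism $(V,D)\to(V',D')$ of the form $1+\Xi$, where $\Xi$ is strictly flow-order lowering.

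The first step is to build a filtered chain map $\Psi:\mathsf C^\kk\to\mathsf C'^\kk$, that is, one preserving every down-set subcomplex. I would compose three pieces: the filtered zigzag of \Cref{cor:filtered-chains} for the first system, the flow comparison $\phi_T$ of \Cref{prop:block-independence}, and the inverse zigzag for the second system. Over a field, each chain homotopy equivalence in these zigzags that preserves down-sets can be inverted up to filtered homotopy. The Morse contraction, the cellular--singular comparison and $\Ev_A$ are all equivalences on each stage, and therefore on each interval quotient, so filtered inverses exist. This produces a filtered quasi-isomorphism $\Psi$, well defined up to filtered homotopy.

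The second step transports $\Psi$ through the perturbed contractions of \Cref{thm:connection-matrix}. Set $U=\pi'_{\Delta'}\Psi\jmath_\Delta:(V,D)\to(V',D')$. This is a filtered chain map, so it is upper triangular with respect to $<_F$. Its diagonal block at $p$ is the map induced on the interval quotient $\{p\}$. By \Cref{thm:homotopy-conley} and the discussion after \Cref{thm:index-filtration}, that map is the canonical Conley-index comparison $H_*(N_p,E_p)\to H_*(N'_p,E'_p)$. The key compatibility to verify is that the flow-map comparison of \Cref{prop:block-independence} restricted to an interval is Salamon's canonical index isomorphism. Under the identification of $V$ and $V'$ this diagonal is therefore the identity, so $U=1+\Xi$ with $\Xi$ strictly order lowering.

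Finally, since $P$ is finite, $\Xi$ is nilpotent and $U$ is invertible with a filtered inverse, exactly as in the proof of \Cref{prop:gauge}. Hence $D'=UDU^{-1}$. The main obstacle is the second step: checking that the diagonal blocks are precisely the canonical identifications, so that they do not depend on $T$ or on the choice of zigzag inverses. I would handle this by working one interval $\{p\}$ at a time and using that $\phi_T$, for large $T$, maps the pair $(X_{\downarrow p}(N),X_{\downarrow p-\{p\}}(N))$ into its primed counterpart. This is the standard flow-defined index comparison, and independence from the choices then follows from the composition law $\Phi_{N',N''}\Phi_{N,N'}=\Phi_{N,N''}$.
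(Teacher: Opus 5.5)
Your argument is correct in outline, but it follows a genuinely different route from the paper's. The paper never builds a chain-level comparison. It observes that each connection complex is a strict $P$-graded differential vector space, identifies its Cartan--Eilenberg system with the homology index system via \Cref{thm:homotopy-conley}, and matches these systems across block choices using the flow maps of \Cref{prop:block-independence}. It then cites the Spendlove--Vandervorst classification of strict $P$-graded differential vector spaces by their Cartan--Eilenberg systems. You instead produce an actual filtered chain map and finish with the elementary triangular argument, which avoids that classification theorem altogether. The paper's route needs only homology-level identifications but relies on a substantial external theorem. Yours is self-contained and yields an explicit isomorphism, at the price of putting all the weight on your first step.

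Two comments on the details.

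\emph{The step you call the main obstacle is not needed for the statement as posed.} The map $U$ is a filtered quasi-isomorphism: each of $\jmath_\Delta$, $\Psi$ and $\pi'_{\Delta'}$ is a quasi-isomorphism on every down-set stage, hence on every interval quotient by the five lemma. On the quotient $\{p\}$ both $D$ and $D'$ vanish, so every diagonal block $U_{pp}:V_p\to V'_p$ is an isomorphism. Then $U=U_0(1+U_0^{-1}\Xi)$, and $U_0^{-1}\Xi$ is strictly order-lowering and hence nilpotent. So $U$ is a filtered chain isomorphism whatever its diagonal is, and $D'=UDU^{-1}$. Verifying that $U_{pp}$ is Salamon's canonical comparison only buys the sharper normalised form $1+\Xi$ of \Cref{prop:gauge}.

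\emph{The load-bearing claim is in your first step, and your justification there is not a valid general principle.} A map of down-set-indexed complexes that is a quasi-isomorphism on each stage (and each interval quotient) need not admit a filtered homotopy inverse. What makes the step work here is the following.
\begin{itemize}
\item $\mathsf C^\kk$ is a strict $P$-graded complex over a field, so it is cofibrant in the projective (equivalently Reedy) structure on diagrams indexed by down-sets: its latching maps are split inclusions of coordinate summands.
\item Every such diagram is fibrant.
\item Your zigzag is an isomorphism in the homotopy category of these diagrams. By cofibrancy of the source it is therefore represented by an actual filtered chain map $\Psi:\mathsf C^\kk\to\mathsf C'^\kk$, unique up to filtered homotopy.
\end{itemize}
You should argue this way, or by an explicit induction along a linear extension of $P$, rather than inverting each arrow of the zigzag individually. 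Intermediate objects, such as the auxiliary complexes used in a cellular--singular comparison, need not be cofibrant.
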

	
	\begin{proof}
		Each matrix gives a strict $P$-graded differential vector space. By \Cref{thm:homotopy-conley}, its Cartan--Eilenberg system in the sense of \cite{SpendloveVandervorst2025} is the homology index system. The flow comparisons of \Cref{prop:block-independence} identify these systems for different block choices. The result now follows from the classification of strict $P$-graded differential vector spaces by their Cartan--Eilenberg systems \cite[Thm.~5.1 and \S 7]{SpendloveVandervorst2025}.
	\end{proof}
	
	Franzosa's existence theorem starts with a chain complex braid \cite{Franzosa1989}, while modern computational constructions reduce a supplied filtered chain complex algebraically \cite{HarkerMischaikowSpendlove2021,TorrasCasasYimPennig2025}. Here the order-lowering perturbation $\Delta=L\beta$ comes from bounded-time contact geometry. Only the final passage from relative block chains to local homology uses homological perturbation.
	
	\subsection{Morse--Bott Functions}
	
	Suppose that the analytic function $f$ is also Morse--Bott, with connected critical submanifolds $S_p$. Write $\lambda_p$ for the rank of the negative normal bundle $\nu_p^-$ and $o_p$ for its integral orientation local system.
	
	\begin{corollary}\label{cor:morse-bott}
		Over a field $\kk$, the landfall connection complex has underlying graded vector space $V_n=\bigoplus_{p\in P}H_{n-\lambda_p}(S_p;o_p\otimes\kk)$, differential given by \eqref{eq:itinerary}, and homology $H_*(X;\kk)$. No Morse--Bott--Smale transversality is required. The same statement holds over $\Z$ if every $H_*(S_p;o_p)$ is free abelian.
	\end{corollary}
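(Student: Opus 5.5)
The argument reduces everything to the identification $V_p=\Con_*(S_p;\kk)\cong H_{*-\lambda_p}(S_p;o_p\otimes\kk)$. Once this is established, \Cref{thm:connection-matrix} supplies the rest. Over a field, \Cref{prop:contractions} gives normalised contractions of each $Q_p^\kk=C_*(N_p,E_p;\kk)$ onto $V_p=H_*(N_p,E_p;\kk)$. By \Cref{lem:block-properties}(iv) this homology is the homological Conley index $\Con_*(S_p;\kk)$, independent of the index pair. \Cref{thm:connection-matrix} then produces the connection matrix $D$ on $\bigoplus_p V_p$ with entries \eqref{eq:itinerary}. Its homology is $H_*(X;\kk)$: the homological perturbation contraction is a chain homotopy equivalence onto the landfall complex, and \Cref{cor:filtered-chains} with $A=P$ identifies the latter with $\Sing_*(X;\kk)$, using $X_P=X$ from \Cref{prop:transit-lattice}. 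None of these results used transversality, which justifies the remark that Morse--Bott--Smale is not needed.

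The substantive step is the Conley index of a nondegenerate critical submanifold. I would first use the independence of the index from the index pair and replace $(N_p,E_p)$ by a standard Morse--Bott index pair. By the Morse--Bott lemma, near $S_p$ there are coordinates on the normal bundle $\nu_p=\nu_p^+\oplus\nu_p^-$ in which $f=c_p+|v^+|^2-|v^-|^2$. The gradient flow for $g$ need not be linear in these coordinates. I would therefore either deform the metric near $S_p$ through a family of metrics for which $S_p$ remains an isolated invariant set (Conley continuation) to a metric adapted to the splitting, or invoke the classical computation directly. For the linear model, take $N=D(\nu^+)\times_{S_p} D(\nu^-)$ with exit set $D(\nu^+)\times_{S_p} S(\nu^-)$. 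This pair deformation retracts onto the Thom pair $(D(\nu^-),S(\nu^-))$. The Thom isomorphism with twisted coefficients then gives $H_n(D\nu_p^-,S\nu_p^-;\kk)\cong H_{n-\lambda_p}(S_p;o_p\otimes\kk)$. Summing over $p$ gives the stated graded space $V_n$.

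For the integral statement, the same Thom isomorphism holds with $\Z$ coefficients, so $\Con_*(S_p;\Z)\cong H_{*-\lambda_p}(S_p;o_p)$. If these groups are free, \Cref{prop:contractions} gives the integral contractions, and \Cref{thm:connection-matrix} and \Cref{cor:filtered-chains} apply verbatim over $\Z$.

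The main obstacle is justifying the local model rigorously for an arbitrary analytic metric. The first thing I would try is a homotopy of gradient-like vector fields $-\nabla_{g_s}f$, all having $S_p$ as an isolated invariant set inside a fixed small neighbourhood. This works because $f$ is a Lyapunov function along the whole homotopy, so the maximal invariant set stays equal to $S_p$. Continuation invariance of the Conley index then reduces the computation to the adapted metric, where the block above is explicit.
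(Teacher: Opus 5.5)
Your proposal is correct and follows the paper's own route. You identify each local Conley index with the Thom pair $(D\nu_p^-,S\nu_p^-)$ via the Morse--Bott lemma, apply the twisted Thom isomorphism, and then invoke \Cref{thm:connection-matrix} with the contractions of \Cref{prop:contractions} (equivalently \Cref{cor:coefficients}) over $\kk$ or $\Z$; your continuation argument through a family of metrics, with $f$ as a common Lyapunov function, spells out the step the paper compresses into ``by the Morse--Bott lemma''.
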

	
	\begin{proof}
		The local index pair is homotopy equivalent to the disk and sphere bundle pair of $\nu_p^-$, by the Morse--Bott lemma. The Thom isomorphism gives the stated local groups. Apply \Cref{thm:connection-matrix} over a field, and \Cref{cor:coefficients} over $\Z$. Compare the transverse trajectory-space construction in \cite{Latschev2000}.
	\end{proof}
	
	For Morse functions this gives an integral complex with one generator per critical point in its Morse index, without requiring the analytic metric to be Morse--Smale.
	
	\section{Examples and Special Cases}\label{sec:examples}
	
	We now provide three examples of landfall data. The first is a degenerate saddle on the sphere whose exit set contains a genuine landfall wall; the second is a family of lens spaces whose torsion is retained by the landfall complex but forgotten by its connection complex; and the third is a pair of ruled surfaces whose top attaching maps are visible in the filtered landfall nerve but not in the homology braid.
	
	\subsection{A Degenerate Sphere Model}
	
	\begin{example}\label{ex:monkey}
		Let $f(x,y,z)=x^3-3xy^2$ on the round sphere $S^2$. Writing $x=\cos\psi\cos\theta$, $y=\cos\psi\sin\theta$ and $z=\sin\psi$ gives $f=\cos^3\psi\cos3\theta$. There are three equatorial maxima at value $1$, three equatorial minima at value $-1$, and two poles $P_\pm$ at value $0$. The extrema are nondegenerate, whereas at either pole $(x,y)$ are analytic coordinates in which $f=\operatorname{Re}(x+iy)^3$. A small isolating disk at a pole has three disjoint exit arcs. Thus $\Con_1(P_\pm;\kk)=\kk^2$, with all other local groups zero, and the total local Poincar\'e polynomial is $3+4t+3t^2$. The negative gradient equations are
		\begin{equation}\label{eq:monkey-flow}
			\dot\psi=3\cos^2\psi\sin\psi\cos3\theta,
			\qquad \dot\theta=3\cos\psi\sin3\theta.
		\end{equation}
		Away from the invariant meridians and the equator, $\frac{d}{d\theta}\log|\tan\psi|=\cot3\theta$, giving the first integral
		\begin{equation}\label{eq:monkey-integral}
			|\tan\psi|=a|\sin3\theta|^{1/3},\qquad a>0.
		\end{equation}
		Equatorial trajectories correspond to $a=0$. The meridians $\theta=k\pi/3$ alternate between maximum--pole and pole--minimum separatrices.
		
		Consider trajectories in the northern sector $0<\theta<\pi/3$. Their minimum distance from $P_+$ is $\arctan(a^{-1})$. Choose $0<d_-<d_+$ so that the closed disk of radius $d_-$ lies in $N_{P_+}^{\circ}$ and $N_{P_+}$ lies in the open disk of radius $d_+$. Trajectories with $a>\cot d_-$ enter the pole block, whereas those with $a<\cot d_+$ miss it and land at the minimum. The maximum--pole separatrix is the limit $a\to\infty$.
		
		On the corresponding arc of the maximum exit set, the pole-landing family is therefore nonempty and open, with a nonempty complement. At a boundary point between these families, compactness of the contact domain retains contact with the pole block, while openness of entrance into its interior excludes terminal landfall there. The trajectory continues to the minimum. This is a genuine landfall wall in the sense of \Cref{cor:wall}: its nonterminal contact factors the maximum-to-minimum arrow through the pole block, producing a $2$-simplex in the landfall nerve. The wall continuation is recorded by its faces and enters the cellular boundary through $L\beta$.
		
		The connection complex has dimensions $3,4,3$ in successive degrees. Its homology is that of $S^2$, so both differentials have rank $2$. Direct contacts from maxima to minima remain present in the category, although the corresponding block of the connection matrix vanishes for degree reasons.
	\end{example}
	
	\subsection{Lens Spaces and Torsion}\label{subsec:lens-torsion}
	
	\begin{example}\label{ex:lens-torsion}
		Fix coprime integers $i>1$ and $j$, let $\zeta=e^{2\pi\sqrt{-1}/i}$ be the corresponding primitive $i$-th root of unity, and equip $L(i,j)=S^3/\langle g\rangle$ with the quotient round metric, where $g(z_1,z_2)=(\zeta z_1,\zeta^jz_2)$. The function $u=|z_2|^2$ has critical circles $C_-=\{z_2=0\}/\langle g\rangle$ and $C_+=\{z_1=0\}/\langle g\rangle$. Its negative gradient satisfies $\dot u=-4u(1-u)$ and preserves both angular coordinates.
		
		Set $f=3u^2-2u^3$. Since $df=6u(1-u)\,du$, this has the same unparameterised trajectories and contact maps as $u$, but its critical circles are normally quartically degenerate. The degeneracy here is only a change of speed along a Morse--Bott flow.
		
		For small $a>0$, take the solid-torus blocks $N_-=\{u\le a\}$ and $N_+=\{u\ge1-a\}$, with block parameter $\epsilon=3a^2-2a^3$. Every point of $E_+=\{u=1-a\}$ lands in $N_-$. In angular coordinates measured in turns, the contact map is the identity on
		\[
		\R^2/\Lambda,\qquad
		\Lambda=\Z^2+\Z\left(\frac1i,\frac ji\right).
		\]
		Thus the contact torus retains the quotient lattice, although the flow order has only two elements.
		
		Write $\pi_1L(i,j)=\langle t\mid t^i=1\rangle$, with $t$ corresponding to $g$, and choose $k$ with $jk\equiv1\pmod i$. The positively oriented cores $C_-$ and $C_+$ represent $t$ and $t^k$: a fundamental angular displacement $1/i$ lifts to $g$ on the first circle and $g^k$ on the second.
		
		For the character $\rho_n(t)=\zeta^n$, $1\le n<i$, the minimum block collapses simply onto its core, whose reduced cellular model has boundary $\zeta^n-1$ from degree one to degree zero. The maximum pair is $(S^1\times\Disk^2,S^1\times S^1)$; its relative product model is the degree-two shift of the circle complex with boundary $\zeta^{nk}-1$. Product subdivisions give simple comparisons with the chosen triangulations. Both twisted complexes are acyclic, and multiplicativity along the two-stage filtration gives
		\begin{equation}\label{eq:lens-torsion}
			\RT_{\rho_n}(L(i,j))=
			\frac{1}{(\zeta^n-1)(\zeta^{nk}-1)}
			\quad\bmod\{\pm\zeta^s\mid s\in\Z\}.
		\end{equation}
		
		By contrast, the local integral Conley groups are $H_*(S^1;\Z)$ and $H_{*-2}(S^1;\Z)$. The connection complex has one generator in each degree from zero to three, and its only possible nonzero differential is $D_2=\pm i$, as follows from $H_1(L(i,j);\Z)=\Z/i$ and $H_2(L(i,j);\Z)=0$. It is independent of $j$.
		
		For instance, $L(7,1)$ and $L(7,2)$ are homotopy equivalent but not simple homotopy equivalent \cite[\S\S 29--30]{Cohen1973}. To see the latter distinction in \eqref{eq:lens-torsion}, put $a_s=2\sin(s\pi/7)$ for $s=1,2,3$. The distinct absolute torsions are $\{a_1^{-2},a_2^{-2},a_3^{-2}\}$ for $L(7,1)$ and $\{(a_1a_2)^{-1},(a_1a_3)^{-1},(a_2a_3)^{-1}\}$ for $L(7,2)$. Their largest elements differ. Changing the fundamental-group generator merely permutes the characters, so this distinction survives all choices. The calculation recovers classical torsion from the landfall filtration and exhibits information lost by the ordinary connection complex.
	\end{example}
	
	\subsection{Attaching Data Beyond the Homology Braid}\label{sec:attaching-example}
	
	\begin{remark}\label{rem:ruled-surfaces}
		The ruled surfaces $\mathbb F_0=S^2\times S^2$ and $\mathbb F_1=\C P^2\#\overline{\C P^2}$ give a classical distinction at the level of attaching maps. Choose generic moment-map components with nonadjacent minimum and maximum vertices in their quadrilateral moment polygons, and torus-invariant analytic K\"ahler metrics. Each function is Morse with critical indices $0,2,2,4$. Its gradient flow comes from the complexified torus and preserves torus orbits. The polygon edges give the cover relations $p_3>_F p_1>_F p_0$ and $p_3>_F p_2>_F p_0$; the open orbit runs from maximum to minimum. Thus both flow orders are the same diamond.
		
		The local integral Conley groups are free and concentrated in even degrees, so the integral connection matrices vanish. By \Cref{thm:homotopy-conley,cor:coefficients}, the full homology index braids are consequently isomorphic. Nevertheless, the final attachments differ. Indeed, put $A'=P-\{p_3\}$. The minimum stage is contractible, and adjoining the two index-two pieces gives successive cofibres $S^2$. Van Kampen and the homology calculation show that the lower stage is simply connected with reduced homology $\Z^2$ in degree two. Hurewicz and Whitehead therefore identify it with $S^2\vee S^2$. The final cofibre is $S^4$; relative Hurewicz and Whitehead similarly give
		\[
		B\cL_i\simeq (S^2\vee S^2)\cup_{\alpha_i}e^4,
		\qquad
		\alpha_0=[u_1,u_2],\quad
		\alpha_1=u_1\circ\eta-u_2\circ\eta,
		\]
		where $u_1,u_2$ are the wedge inclusions and $\eta:S^3\to S^2$ is the Hopf map. The displayed classes follow from Hilton's decomposition of $\pi_3(S^2\vee S^2)$ from \cite{Hilton1955} and the intersection forms: the hyperbolic form for $\mathbb F_0$ and $\operatorname{diag}(1,-1)$ for $\mathbb F_1$. Their different parity precludes an equivalence of attaching classes. The identification of the lower stage with $S^2\vee S^2$ uses an arbitrary basis of its second homology, so the $\alpha_i$ are only defined up to self-equivalences of $S^2\vee S^2$ which need not respect the two separately labelled index-two stages; parity is invariant under such changes of basis, so the conclusion is unaffected.
		
	\end{remark}
	
	\raggedbottom
	\begingroup
	\linespread{1}\selectfont
	\bibliographystyle{abbrv}
	\bibliography{refs}
	\endgroup
	
\end{document}